\documentclass[12pt]{amsart} 

\usepackage{amsmath,wasysym}
\usepackage{amsxtra}
\usepackage{amscd}
\usepackage{amsthm}
\usepackage{amsfonts}
\usepackage{amssymb}
\usepackage{eucal}
\usepackage{graphics}
\usepackage{hyperref,mathrsfs}
\usepackage[usenames,dvipsnames]{xcolor}
\usepackage{tikz}
\usepackage{tikz-cd}
\usepackage{stmaryrd}
\usepackage[all]{xy}
\usepackage{hyperref}
\usepackage{color}
\usepackage{bbm} 
\usepackage{upgreek}
\usepackage{extarrows}
\usepackage{yhmath}
\usepackage{thmtools}
\usepackage[capitalise]{cleveref}
\crefname{equation}{}{}

\allowdisplaybreaks

\hypersetup{colorlinks,linkcolor=blue, urlcolor=blue,
	citecolor=blue}

\theoremstyle{plain}
\newtheorem{thm}{Theorem}[section]
\newtheorem{lem}[thm]{Lemma}
\newtheorem{cor}[thm]{Corollary}
\newtheorem{prop}[thm]{Proposition}

\theoremstyle{definition}
\newtheorem{definition}[thm]{Definition}

\newtheorem{rem}[thm]{Remark}

\numberwithin{equation}{section}

\newcommand{\C}{{\mathbb C}}

\newcommand{\co}{\text{co}}
\newcommand{\ro}{\text{ro}}

\DeclareMathOperator{\pt}{pt}

\DeclareMathOperator{\Span}{Span}
\DeclareMathOperator{\Sp}{Sp}
\DeclareMathOperator{\AIII}{AIII}

\DeclareMathOperator{\Eu}{Eu}
\DeclareMathOperator{\diag}{diag}
\DeclareMathOperator{\Gr}{Gr}

\DeclareMathOperator{\Res}{Res}

\begin{document}

\title[Twisted Yangians and Steinberg varieties of type C]{Twisted Yangians of type AIII and Steinberg varieties of type C}

\author{Changjian Su}
\address[Changjian Su]{Yau Mathematical Sciences Center, Tsinghua University, Beijing, China}
\email{changjiansu@mail.tsinghua.edu.cn}

\author{Yang Yang}
\address[Yang Yang]{School of Mathematical Sciences, Key Laboratory of MEA (Ministry of Education) \& Shanghai Key Laboratory of PMMP, East China Normal University, Shanghai 200241, China}
\email{52275500011@stu.ecnu.edu.cn}

\begin{abstract}
    We study the equivariant homology of the generalized Steinberg variety of type C and show that there exists a surjective algebra homomorphism from the twisted Yangian of type $\AIII_{2n}^{(\tau)}$ to it. 
\end{abstract}

\maketitle

\setcounter{tocdepth}{1}
\tableofcontents

\section{Introduction}

Geometric methods provide powerful tools for studying representations of algebras. A key step is to realize these algebras geometrically via convolution; one can then use representations of the resulting convolution algebras to construct representations of the algebras of interest, see \cite{CG97}. Kazhdan and Lusztig used the equivariant K-group of the Steinberg variety to give a geometric realization of the affine Hecke algebra for the Langlands dual group and classified its irreducible representations via the Springer fibers \cite{KL87} (also see \cite{CG97, X07}). This also works for the quantum affine algebras. Ginzburg--Vasserot and Vasserot studied the affine quantum group for $\mathfrak{gl}_n$ via the equivariant K-group of the generalized Steinberg varieties corresponding to the $n$-step partial flag variety in type A \cite{GV93,V98}. The cotangent bundle of the type A partial flag varieties are special cases of Nakajima quiver varieties. Nakajima used the equivariant K-theory of his quiver varieties to realize the quantum affine algebras of type ADE \cite{Na01}, and Varagnolo used the equivariant homology of the quiver varieties to realize the corresponding Yangians \cite{Vara00}. The Nakajima quiver varieties are Higgs branch of the quiver gauge theories. Recently, Bravermann, Finkelberg and Nakajima gave a mathematically rigorous definition of the Coulomb branches \cite{BFN18}. Moreover, the shifted Yangian and the shifted quantum affine algebras can be realized via the quantized Coulomb branches of the corresponding quiver gauge theories, see \cite{BFN19,FT19,NW23,We19}. In all these works, the Drinfeld new presentation for the quantum algebras play an important role \cite{Dr87, Beck, Da12}. On the other hand, the quantum group of type A can also be realized by the type A partial flag varieties over a finite field by Beilinson--Lusztig--MacPherson \cite{BLM}. Through the Maulik--Okounkov stable envelopes, the Yangians and the quantum affine algebras can also be realized via the geometry of the Nakajima quiver varieties by the RTT formalism, see \cite{MO19,OS22}

iQuantum groups, arising from the quantization of symmetric pairs $(\mathfrak{g},\mathfrak{g}^\sigma)$ into quantum symmetric pairs, form a natural generalization of Drinfeld–Jimbo quantum groups.
Many important constructions for the quantum groups have been generalized to the iquantum groups, see \cite{W23}. In \cite{BKLW18}, the iquantum group of quasi-split type AIII was realized by counting points over finite fields of the $N$-step isotropic flags of type B, and this has been generalized in \cite{FLW+} to the affine case. Thanks to the Drinfeld-type new presentation of the affine 
iquantum group of type AIII obtained recently in \cite{LWZ24}, this affine iquantum group can be realized via the equivariant K-theory of generalized Steinberg varieties of type  C \cite{SW24} (also cf. \cite{FMX22}), generalizing the work of Ginzburg--Vasserot and Vasserot \cite{GV93,V98}. The cotangent bundle of partial flag varieties of classical types are examples of the $\sigma$-quiver varieties defined by Li \cite{Li19}, which are introduced to study the geometric representation theory of the (quantum) symmetric pairs. In particular, using the sheaf-theoretic formulation of Maulik–Okounkov stable envelopes \cite{Na17}, Li constructed stable envelopes for the $\sigma$-quiver variety, and consequently defined the twisted Yangian—a coideal subalgebra of the Maulik–Okounkov Yangian—which acts on the cohomology of quiver varieties. Building upon \cite{LWZ25b,LWZ25a}, Lu and Zhang established the Drinfeld new presentation for the twisted Yangian of quasi-split type \cite{LZ25}. Based on this, we can use the Coulomb branch of the involution fixed part of the quiver gauge theory to give a geometric realization of the shifted twisted Yangian \cite{LWW25a,SSX25} (also cf. \cite{bartlett2025gklo,LWW25b,wang2025quivers}). In \cite{dong2024flag}, Dong and Ma studied the equivariant homology of the generalized Steinberg varieties of type B and C, defined a pre-twisted Yangian, and constructed an algebra homomorphism from the pre-twisted Yangian to the equivariant homology the generalized Steinberg varieties. The pre-twisted Yangian is closely related to the twisted Yangian of type AIII. Moreover, Luo, Xu and the second named author used the generalized Steinberg variety to give a geometric construction of the Schur algebra \cite{LXY24}.

Now let us state the main result of this paper. Let $d$ and $n$ be two positive integers, and let $N=2n+1$. Let $V :=\C^{2d}$ be a symplectic vector space, $G = \Sp(V)$, and $\mathscr{F}$ be the $N$-step partial flag variety with cotangent bundle $\mathscr{M}:=T^*\mathscr{F}$. The torus $\mathbb{C}^*$ acts on $\mathscr{M}$ by scaling the cotangent fibers, and let $\hbar$ be its equivariant parameter. The natural $G$-action on $\mathscr{F}$ induces a Hamiltonian action on the symplectic variety $\mathscr{M}$, with moment map $\mathscr{M}\rightarrow \mathscr{N}\hookrightarrow \mathfrak{sp}_{2d}$, where $\mathscr{N}$ is the nilpotent cone of the Lie algebra $\mathfrak{sp}_{2d}$. Let $Z:=\mathscr{M}\times_{\mathscr{N}}\mathscr{M}$ be the generalzied Steinberg variety, and $H_*^{G\times \mathbb{C}^*}(Z)$ be the equivariant Borel--Moore homology with complex coefficients. Via convolution, it is an associative algebra over the base ring $H_*^{G\times \mathbb{C}^*}(\pt)\simeq \mathbb{C}[\mathfrak{g}]^G[\hbar]$, see \cite{CG97,AF24}. Let $\mathbf{Y}^\imath$ be the twisted Yangian of quasi-split type $\AIII_{2n}^{(\tau)}$, which is associated to the following Satake diagram: 
\begin{center}
\begin{tikzpicture}[scale=.4]
\node at (0,0.75) {$1$};
\node at (4,0.75) {$2$};
\node at (12,0.75) {$n-1$};
\node at (16,0.75) {$n$};     
\node at (0,-3.75) {$2n$};
\node at (4,-3.75) {$2n-1$};
\node at (12,-3.75) {$n+2$};
\node at (16,-3.75) {$n+1$}; 
       
\node at (8,0) {$\dots$};
\node at (8,-3) {$\dots$};
    
\foreach \x in {0,2,6,8} 
{\draw[thick,xshift=\x cm] (\x, 0) circle (0.3); 
\draw[thick,xshift=\x cm] (\x, -3) circle (0.3);}

\foreach \x in {0,6}
{\draw[thick,xshift=\x cm] (\x,0) ++(0.5,0) -- +(3,0);
\draw[thick,xshift=\x cm] (\x,-3) ++(0.5,0) -- +(3,0);}
   
\foreach \x in {2,4.5}
{\draw[thick,xshift=\x cm] (\x,0) ++(0.5,0) -- +(2,0);
\draw[thick,xshift=\x cm] (\x,-3) ++(0.5,0) -- +(2,0);}
    
\foreach \x in {8}
\draw[thick,xshift=\x cm] (\x,-0.5) -- +(0,-2);
    
\foreach \x in {0,4} 
\draw[thick,<->, blue, bend left=50] (\x+0.3,-2.5) to (\x+0.3,-0.5);
\foreach \x in {12,16} 
\draw[thick,<->, blue, bend left=50] (\x-0.3,-2.5) to (\x-0.3,-0.5);
 
\end{tikzpicture},
\end{center}
where the blue arrows indicate the diagram involution $\tau$.
The following is our main result. 
\begin{thm}[Theorem \ref{thm:main} and \ref{surj}]\label{thm:mainintro}
    There exists an algebra homomorphism $$\Psi:\mathbf{Y}^\imath\rightarrow H_*^{G\times \C^*}(Z),$$ and it is surjective if we specialize to a semisimple element $(s,t)\in \mathfrak{g}\times \mathbb{C}$ with $t\neq 0$.
\end{thm}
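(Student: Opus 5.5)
We would follow the template of Varagnolo's realization of Yangians and of the affine iquantum group realization in \cite{SW24}, working throughout with the Drinfeld-type new presentation of $\mathbf{Y}^\imath$ from \cite{LZ25}.

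\textbf{Generators.} We would define $\Psi$ on the Drinfeld generators $h_{i,r}$ and $b_{i,r}$ ($1\le i\le n$, $r\ge 0$). Here $i$ ranges over the $\tau$-orbits of the nodes of the Satake diagram, with the orbit $\{n,n+1\}$ playing the special role. The components of $\mathscr{M}$ are indexed by dimension vectors $\mathbf{v}=(v_1,\dots,v_{2n+1})$ with $v_j+v_{N+1-j}=2d$. For each $i$, the generator $b_{i,r}$ would be sent to the class of a Hecke correspondence $Z_i\subset \mathscr{M}\times\mathscr{M}$. The correspondence $Z_i$ consists of pairs of isotropic/coisotropic flags that agree except at steps $i$ and $N-i$, where they differ by a line and its perpendicular. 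The class is twisted by $c_1(\mathcal{L})^r$, where $\mathcal{L}$ is the tautological line bundle (the quotient of the two flags at step $i$). As in the type A case, the correspondence should be chosen to include both the ``raising'' and ``lowering'' directions, reflecting $b_i\sim e_i+f_{\tau i}$. The Cartan elements $h_{i,r}$ would be sent to diagonal classes $\Delta_*$ of Chern classes of $\hbar$-shifted tautological bundles, packaged via a generating series $h_i(u)\mapsto \frac{\text{(Euler classes of } \mathcal{V}_{i\pm1}\text{ twisted by } u)}{\text{(Euler classes of } \mathcal{V}_i)}$ with the appropriate $\hbar$-shifts. At $i=n$, the factor from $\mathcal{V}_{n+1}=\mathcal{V}_n^\perp$ produces the extra self-coupling.

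\textbf{Relations.} We would verify the defining relations by the standard localization technique. First we embed $H_*^{G\times\C^*}(Z)$ faithfully into the algebra of difference operators acting on $H^*_{T\times\C^*}(\mathscr{M}^T)$, using the fact that $Z$ has a cellular fibration (so equivariant homology is free and localization is injective). Each Hecke correspondence becomes an explicit shift operator with rational coefficients, computed from normal bundle weights at fixed points. The relations become identities of rational functions:
\begin{itemize}
\item the $[h,h]=0$ relation and $[h,b]$ relations hold because the $h$'s act diagonally;
\item the $[b_{i},b_{j}]$ relations for $|i-j|>1$ hold by commuting shifts;
\item the adjacent $[b_{i,r+1},b_{j,s}]-[b_{i,r},b_{j,s+1}]$ relations can be checked as in Varagnolo.
\end{itemize}
The \emph{main obstacle} is twofold. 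The first part is the relation at $i=j=n$: $[b_{n,r},b_{n,s}]$ is not zero but equals an expression in the $h_{n,k}$. Here the two composites $Z_n\circ Z_n$ differ on the diagonal by a term coming from flags where the line is isotropic against its own perpendicular, and matching the coefficients with \cite{LZ25} requires a careful residue computation. The second part is the Serre-type relations involving node $n$. Our first approach would be to compare with the finite-field/K-theoretic computation of \cite{SW24} and take the homological limit, or alternatively to check directly in rank $d=1,2$ and reduce the general case to these by restriction to Levi-type fixed loci.

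\textbf{Surjectivity.} After specializing at semisimple $(s,t)$ with $t\ne0$, we would use the Chriss--Ginzburg localization theorem and the completion argument. The specialized algebra is isomorphic to $H_*(Z^{s})$ of the fixed locus, and this has an Ext-algebra description $\mathrm{Ext}(\pi_*\C,\pi_*\C)$ on $\mathscr{N}^s$. This algebra is generated by the diagonal part (the images of $h_{i,r}$, which generate $H^*(\mathscr{M}^s)$ once the Chern classes generate, by a Kirwan-type surjectivity for partial flag varieties) together with the images of the simple Hecke correspondences. Generation by these follows from decomposing an arbitrary $G$-orbit closure in $\mathscr{F}\times\mathscr{F}$ (a Bruhat cell) as a convolution of elementary ones, modulo lower terms, together with a filtration argument by orbit dimension as in Ginzburg--Vasserot.
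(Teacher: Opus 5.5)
Your overall template is the one the paper follows: explicit Hecke and diagonal classes, relations checked in a faithful polynomial (localized) representation, and generation via the orbit filtration. There is, however, a genuine gap already at the level of generators.

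In $\mathbf{Y}^\imath$ of quasi-split type $\AIII_{2n}^{(\tau)}$, only the Cartan currents are halved by $\tau$ (via $h_{\tau i}(u)=h_i(-u)$). The $b_{i,r}$ exist for \emph{every} node $1\le i\le 2n$, and $b_i$, $b_{\tau i}$ are distinct, non-commuting generators. For $i\ne n,n+1$ one has $(u+v)[b_i(u),b_{\tau i}(v)]=\hbar(h_{\tau i}(v)-h_i(u))$ by \eqref{equ:bibjcij0}, and for $(i,j)=(n,n+1)$ there is the $\delta_{\tau i,j}$-term in \eqref{equ:bibj}. So an assignment on $n$ orbit-indexed families $b_{i,r}$, $1\le i\le n$, does not define a map on $\mathbf{Y}^\imath$.

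Your phrase ``include both the raising and lowering directions'' suggests the class is a correspondence plus its transpose. By Lemma~\ref{lem:inv} that is a signed combination of $\mathscr{B}_{i}$ and $\mathscr{B}_{\tau i}$, which is not the image of any single generator. No symmetrization is needed. Since $V_{\tau i}=V_i^\perp$, the single closed orbit $\mathcal{O}_{E^\theta_{i,i+1}(\mathbf{v}'',1)}$ already carries both halves of $b_i$: there $V'_i\subset V_i$ has codimension one, hence $V_{\tau i}\subset V'_{\tau i}$. One takes one such class $\mathscr{B}_{i,r}$ for each $i\in[1,2n]$, and $\mathscr{B}_{\tau i}$ is then, componentwise and up to sign, the transpose of $\mathscr{B}_i$.

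The same conflation distorts your list of hard relations. $[b_n(u),b_n(v)]$ has no $h$-term at all: since $\tau n\neq n$, it is the ordinary $c_{nn}=2$ case. The $h$-terms occur instead in $[b_i(u),b_{\tau i}(v)]$ for \emph{every} $i$. Verifying them is exactly where one needs the partial-fraction expansions of $H_{i,\mathbf{v}}(u)$ and $(uH_{i,\mathbf{v}}(u))^\circ$ from Lemma~\ref{lem:Hexp}, together with \eqref{equ:trundivbyz}.

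Several further steps are asserted rather than argued.
\begin{itemize}
\item The relations \eqref{equ:hibj} do not follow from the $h$'s acting diagonally; that only gives $[h,h]=0$. They are rational identities in $u,v,x_t$ that hold only for the specific shifts $\frac{n-i}{2}\hbar+\frac{\hbar}{4}$ and the extra factor $1-\frac{(\delta_{i,n}-\delta_{i,n+1})\hbar}{4u}$, which you leave as ``appropriate $\hbar$-shifts''.
\item The Serre relation \eqref{equ:fserre} cannot be imported by a ``homological limit'' of the K-theoretic realization in \cite{SW24}. That is a different algebra with a different presentation, and you give no degeneration mechanism. Your reduction to $d=1,2$ is also unjustified. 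In the polynomial representation the relation is a direct computation, whose surviving coefficient is evaluated by a residue at infinity.
\item For surjectivity, the images of $b_{i,r}$ and $h_{i,r}$ are sums over all components $\mathbf{v}$. You must explain how to reach the componentwise classes $\mathscr{B}_{i,\mathbf{v},0}$ and $\mathscr{H}_{i,\mathbf{v},k}\cdot[Z_{\diag(\mathbf{v})}]$. The paper uses idempotents $\mathbf{I}_\mathbf{v}$ that are polynomials in the $h_{i,0}$, and this is where it invokes $t\neq 0$; your sketch never uses that hypothesis.
\item The $h_{i,r}$ alone do not generate $\mathbf{R}^{[\mathbf{v}]^{\mathfrak{c}}}$. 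In even degrees one also needs the base-ring elements $\sum_j x_j^{2k}$, which become scalars only after specializing at $s$.
\item The orbit-closure induction at the middle node is not covered by the Ginzburg--Vasserot argument. This concerns the case $(h,l)=(n,n+1)$ and the classes on $Z_{E^\theta_{n+1,n+2}(\mathbf{v},a)}$ with $a\ge 2$. It needs a separate computation, namely that a symmetrized Euler-class ratio equals the invertible constant $(-1)^a(a+1)$.
\end{itemize}
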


The idea of the proof is as follows. For generators of the twisted Yangian $\mathbf{Y}^\imath$, we associate some explicit elements $\mathscr{H}_{i,r}$ and $\mathscr{B}_{i,r}$ in the equivariant homology group. In order to show that this is an algebra homomorphism, we check the relations on the faithful module $H_*^{G\times \C^*}(\mathscr{F})$ of $H_*^{G\times \C^*}(Z)$, see Lemma \ref{lemma:faithful}. In \cite{dong2024flag,FMX22,SW24}, the operators are divided into two kinds, hence in checking the relations there are many cases. The first advantage of our approach is that we have a uniform formula for all the operators. Moreover, there is an obvious involution on $H_*^{G\times \C^*}(Z)$ by switching the two factors of $\mathscr{M}$ in the Steinberg variety $Z$, which behaves well with respect to the convolution and interchanges the elements $\mathscr{B}_{i,r}$ corresponding to the generators of $\mathbf{Y}^\imath$, see Lemma \ref{lem:inv}. These two observations significantly simplified the verifications of the relations. 

Using the construction of the representations of the convolution algebras in \cite[Chapter 8]{CG97}, we are able to construct some representations of the twisted Yangian $\mathbf{Y}^\imath$. On the other hand, the twisted Yangian $\mathbf{Y}^\imath$ studied in this paper is closely related to the reflection algebra introduced by Molev and Ragoucy, who also gave a complete description of the finite-dimensional irreducible representations of the latter, see \cite[Theorem 6.17]{LZ25} and \cite{MR02}. In particular, the shifts $\frac{n-i}{2}\hbar+\frac{\hbar}{4}$ in our generators $\mathscr{H}_{i,r}$ and $\mathscr{B}_{i,r}$ in \eqref{equ:geoHir} and \eqref{equ:geoBir} below matches well with the shifts in \cite[Equation (6.23) and (6.24)]{LZ25}. It is an interesting question to identify the geometrically constructed representations with those from \cite{MR02}.

An immediate corollary of Theorem \ref{thm:mainintro} is that the twisted Yangian $\mathbf{Y}^\imath$ acts on the equivariant cohomology of the cotangent bundles of the $N$-step partial flag varieties of type C. This result was also obtained by Nakajima in \cite[Theorem 7.4]{nakajima2025instantons} as follows. The cotangent bundles of the partial flag varieties arise as special cases of the $\sigma$-quiver varieties. In \textit{loc.\ cit.}, Nakajima computed the associated K-matrices and thereby identified the twisted Yangian defined via stable envelopes with the Molev–Ragoucy reflection algebra. Consequently, the cohomology of these cotangent bundles carries a representation of the Molev–Ragoucy reflection algebra.

Let us conclude the introduction with an overview of the paper's structure. In Secton \ref{sec:con}, we recall the convolution constructions and prove a generating set for $H_*^{G\times \C^*}(Z)$. In Section \ref{sec:operators}, we construct the explicit homology elements corresponding to the generators in the twisted Yangian $\mathbf{Y}^\imath$. In Section \ref{sec:main}, we recall the Drinfeld new presentation of $\mathbf{Y}^\imath$ and state the main result. In Section \ref{sec:rep}, we show that the specialized homomorphism is surjective and construct representations of $\mathbf{Y}^\imath$ via the geometry of the Spaltenstein varieties. Finally, we prove our main theorem in Section \ref{sec:relations}.

\subsection*{Acknowledgments}
We would like to thank Li Luo, Hiraku Nakajima, Weiqiang Wang, Zheming Xu, and Weinan Zhang for helpful conversations. C.S. is supported by the National Key R\&D Program of China  (No. 2025YFA1017400). Y.Y. is supported by the NSF of China (No. 125B2001).

\section{Convolutions and Steinberg varieties of type C}
\label{sec:con}
\subsection{Convolution in equivariant Borel--Moore homology}
\label{subsec:convolution}

For a connected complex reductive algebraic group $G$ and a quasi-projective $G$-variety $X$, let $H^G_*(X)$ (resp. $H_G^*(X)$) denote the $G$-equivariant Borel--Moore homology (resp. $G$-equivariant cohomology) of $X$ with complex coefficients, which is a module for the ring $H^G_*(\pt)$; see \cite{AF24}. Via the Poincar\'e duality, we have an action map, denoted by a dot, $H_G^*(X)\otimes H^G_*(X)\rightarrow H^G_*(X)$. Any $G$-equivariant vector bundle $E$ on $X$ has an equivariant Chern polynomial $\lambda_z(E):=\sum_i z^i c_i(E)\in H_G^*(X)[z]$, where $c_i(E)$ denotes the $i$-th $G$-equivariant Chern class of $E$. Let $K^G(X)$ be the $G$-equivariant K-group of $X$, see \cite{CG97}. Then the equivariant Chern polynomial $\lambda_z(E)$ only depends on the class of $E$ in $K^G(X)$.

Given three smooth $G$-varieties $M_1,~M_2,~M_3$, let
$$p_{ij}:M_1 \times M_2 \times M_3 \rightarrow M_i\times M_j$$
be the obvious projection maps.
Let $Z_{12}\subseteq M_1 \times M_2 $ and $Z_{23}\subseteq M_2 \times M_3 $ be $G$-stable closed subvarieties.
We denote
 \begin{equation*}
   Z_{12}\circ Z_{23}= p_{13}(p_{12}^{-1}(Z_{12})\cap p_{23}^{-1}(Z_{23})).
 \end{equation*}
If the restriction of $p_{13}$ to $p_{12}^{-1}(Z_{12})\cap p_{23}^{-1}(Z_{23})$ is a proper map,
then we define the convolution product as follows (see \cite{CG97}):
\begin{equation*}
\begin{split}
\star: \ \ H_*^{G}(Z_{12})\otimes H_*^{G}(Z_{23})&\longrightarrow H_*^{G}(Z_{12} \circ Z_{23}), \\
   \gamma_{1} \otimes \gamma_{2} &\mapsto p_{{13}{\ast}}(p_{12}^{\ast}\gamma_{1} \cap p_{23}^{\ast}\gamma_{2}).
\end{split}
\end{equation*}

Let $F_i$ ($i=1,2$) be smooth $G$-varieties, $M_i=T^*F_i$, and $\pi_i$ denote the projections $M_i\rightarrow F_i$ and $s_i$ denote the inclusion of the zero section $F_i$ into $M_i$. The torus $\C^*$ acts on $M_i$ by $z\cdot (x,\xi)=(x,z^{-2}\xi)$, where $x\in F_i$ and $\xi\in T_x^*F_i$. Let $q$ denote the degree one representation of $\mathbb{C}^*$, and $\hbar:=c_1^{\C^*}(q^{-2})$. Then $H_*^{\C^*}(\pt)=\C[\hbar]$.

Let $\mathscr{O}\subset F_1\times F_2$ be a smooth closed $G$-stable sub-variety, and $Z_\mathscr{O}$ denote the conormal bundle $T_\mathscr{O}^*(F_1\times F_2)\subset M_1\times M_2$. Suppose 
the projections $p_{i,\mathscr{O}}:\mathscr{O}\rightarrow F_i$ are smooth fibrations with $p_{1,\mathscr{O}}$ being proper, and we also assume that the first projection $p_1:Z_\mathscr{O}\cap (M_1\times F_2)\rightarrow M_1$ is proper (for example, this holds if $F_2$ is proper). We apply the convolution construction to the setting $M_3=\pt$, $Z_{12}=Z_\mathscr{O}$ and $Z_{23}=s_2(F_2)$. By the assumption of $\mathcal{O}$, 
\[p_1(Z_\mathscr{O}\cap (M_1\times F_2))\subset s_1(F_1).\]
Therefore, the convolution gives morphism
\[H_*^{G\times\C^*}(Z_{\mathcal{O}})\otimes H_*^{G\times\C^*}(F_2)\rightarrow H_*^{G\times\C^*}(F_1).\] 

By the Thom isomorphism,  $H_*^{G\times\C^*}(Z_\mathscr{O})\simeq H_*^{G\times\C^*}(\mathscr{O})$. Therefore, any $\gamma\in H_*^{G\times\C^*}(\mathscr{O})$ defines an $H^{G\times\C^*}_*(\pt)$-modules homomorphism $\rho_{\gamma}:H_*^{G\times\C^*}(F_2)\rightarrow H_*^{G\times \C^*}(F_1)$, and we need the following useful formula. 
\begin{lem}\cite[Lemme 2]{V93}\label{Lemme 2}
    For any $\gamma\in H_*^{G\times\C^*}(\mathscr{O})$ and $\gamma_2\in H_*^{G\times \C^*}(F_2)$,
    \[\rho_\gamma(\gamma_2)= p_{1,\mathscr{O}*}\bigg(\Eu(q^{-2}\otimes T^*_{p_{1,\mathscr{O}}})\cdot p_{2,\mathscr{O}}^*\gamma_2\cap \gamma\bigg)\in H_*^{G\times \C^*}(F_1),\]
    where $T^*_{p_{1,\mathscr{O}}}$ is the relative cotangent bundle, and $\Eu(q^{-2}\otimes T^*_{p_{1,\mathscr{O}}})$ is the $T\times\C^*$-equivariant Euler class of the tensor product of $T^*_{p_{1,\mathscr{O}}}$ with the trivial line bundle with the nontrivial $q^{-2}$ action of $\C^*$.
\end{lem}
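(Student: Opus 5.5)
The plan is to unwind the convolution $\rho_\gamma(\gamma_2)=p_{1*}(p_{12}^*\gamma\cap p_{23}^*s_{2*}[F_2]\text{-part})$ on $Z_\mathscr{O}$ and reduce it to a computation on $\mathscr{O}$ via the Thom isomorphism and an excess intersection formula. The identifications are as follows. We regard $\gamma$ as a class on $Z_\mathscr{O}$ through the Thom isomorphism $\pi^*:H_*^{G\times\C^*}(\mathscr{O})\simeq H_*^{G\times\C^*}(Z_\mathscr{O})$, and $\gamma_2$ as a class on $s_2(F_2)\subset M_2$. The convolution then equals $p_{1*}$ of the refined intersection of $\pi^*\gamma$ with $M_1\times s_{2*}\gamma_2$ inside $M_1\times M_2$. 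Its support lies in $Z_\mathscr{O}\cap(M_1\times F_2)$. This set is the vector bundle $E$ over $\mathscr{O}$ whose fiber at $(x_1,x_2)$ consists of the covectors $(\xi,0)$ conormal to $\mathscr{O}$. These $\xi$ are exactly the covectors on $T_{x_1}F_1$ vanishing on $dp_1(T\mathscr{O})$. Because $p_{1,\mathscr{O}}$ is smooth, that image is all of $T_{x_1}F_1$, so $\xi=0$. Hence the intersection is just $\mathscr{O}$, embedded in $M_1\times M_2$ via the zero section.

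The key step is the excess intersection computation. Inside $M_1\times M_2$ we intersect the smooth subvarieties $Z_\mathscr{O}$ (of dimension $\dim F_1+\dim F_2$) and $M_1\times F_2$. Their intersection $\mathscr{O}$ is smooth, so the intersection is clean, and the refined product equals the ordinary product twisted by the Euler class of the excess bundle
\[
\mathcal{E}=T(M_1\times M_2)|_\mathscr{O}\big/\bigl(TZ_\mathscr{O}|_\mathscr{O}+T(M_1\times F_2)|_\mathscr{O}\bigr).
\]
I would compute $\mathcal{E}$ by splitting all tangent spaces along $\mathscr{O}$ into base and fiber directions. $T(M_1\times M_2)|_\mathscr{O}$ is $T(F_1\times F_2)\oplus q^{-2}\otimes(T^*F_1\oplus T^*F_2)$, where the $q^{-2}$ twist records the $\C^*$-weight on the fibers. $TZ_\mathscr{O}|_\mathscr{O}$ contributes $T\mathscr{O}\oplus q^{-2}\otimes T^*_\mathscr{O}(F_1\times F_2)$, and $T(M_1\times F_2)$ contributes $T(F_1\times F_2)\oplus q^{-2}\otimes T^*F_1$. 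Modulo these, the fiber part that remains is $q^{-2}\otimes$ the cokernel of $T^*_\mathscr{O}(F_1\times F_2)\oplus T^*F_1\to T^*F_1\oplus T^*F_2$, restricted to $\mathscr{O}$. That cokernel is $T^*\mathscr{O}/p_1^*T^*F_1=T^*_{p_{1,\mathscr{O}}}$. This yields the factor $\Eu(q^{-2}\otimes T^*_{p_{1,\mathscr{O}}})$.

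Finally, we assemble the pieces. The Thom class pulls back along the zero section to $\gamma$. The pullback of $s_{2*}\gamma_2$ contributes $p_{2,\mathscr{O}}^*\gamma_2$ by base change, since $p_{2,\mathscr{O}}$ is smooth and hence flat. The pushforward $p_1$ restricted to $\mathscr{O}$ is $p_{1,\mathscr{O}*}$, which is well defined because $p_{1,\mathscr{O}}$ is proper. I expect the main obstacle to be bookkeeping rather than ideas: fixing sign and degree conventions for the Thom isomorphism, and checking that the ordinary product on the clean intersection is indeed $p_{2,\mathscr{O}}^*\gamma_2\cap\gamma$. To keep these conventions consistent, I would first verify everything $T\times\C^*$-equivariantly at fixed points by localization.
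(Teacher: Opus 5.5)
Your argument is correct and is essentially the standard proof behind the cited Lemme 2 of \cite{V93}; the paper gives no proof of its own. That proof identifies $Z_\mathscr{O}\cap(M_1\times F_2)$ with $\mathscr{O}$ inside the zero section, computes the excess bundle as $q^{-2}\otimes T^*_{p_{1,\mathscr{O}}}$, applies the clean (excess) intersection formula, and pushes forward along the proper map $p_{1,\mathscr{O}}$. One justification needs fixing: smoothness of the intersection does not by itself make it clean. What you need is $TZ_\mathscr{O}\cap T(M_1\times F_2)=T\mathscr{O}$ along $\mathscr{O}$, and this follows from the same submersion argument you already used. The fiber part of that intersection is $T^*_\mathscr{O}(F_1\times F_2)\cap(T^*F_1\oplus 0)=0$. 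Equivalently, your cokernel has rank $\dim\mathscr{O}-\dim F_1$, which is exactly the excess dimension.
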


For the computations, we will frequently use the localization formula in equivariant Borel--Moore homology. Let $T\subset G$ be a maximal torus, and let $X$ be a smooth projective variety such that the torus fixed point set $X^T$ is finite. First of all, we have $H_*^G(X)\simeq H_*^T(X)^W$, where $W$ is the Weyl group. Let $\pi:X\rightarrow \pt$ be the structure morphism. Then for any $\gamma\in H_*^T(X)$, we have the following localization formula \cite{AB84}
\begin{equation}\label{equ:local}
    \pi_*(\gamma)=\sum_{x\in X^T}\frac{\gamma|_x}{\Eu(T_xX)}\in H_*^T(\pt),
\end{equation}
where $\gamma|_x\in H_*^T(\pt)$ is the pullback of $\gamma$ to the fixed point $x\in X^T$, and $\Eu(T_xX)=\prod_{\mu_i}\mu_i\in H_*^T(\pt)$ with the product over all the torus weights $\{\mu_i\}$ in the $T$-vector space $T_xX$.

\subsection{Partial flag varieties of type C}
\label{subsec:flag}

Let $V :=\C^{2d}$ with a non-degenerate skew-symmetric bilinear form $(-,-)$ given by the matrix 
$\begin{pmatrix}
    0& J_d\\-J_d&0
\end{pmatrix},$ 
where \[J_d=\begin{pmatrix}
    & & 1\\
    & \adots &\\
    1 & &
\end{pmatrix}_{d\times d}.\]
Throughout the paper, we set
\[
G = \Sp(V) \quad\text{ and } \quad N=2n+1,
\]
for a fixed positive integer $n$. For $1\leq i\leq 2n$, let $\tau i:=N-i$. Let 
\begin{align}  \label{eq:Lamdacd}
\Lambda_{\mathfrak{c},d}= \big\{ \mathbf{v}=(v_i) \in \mathbb{N}^{N} \mid   v_i = v_{N+1-i},\quad \textstyle \sum_{i=1}^{N} v_i = 2d \big\}.
\end{align}
For any subspace $W\subseteq V$, let $W^{\perp} = \{x \in V\mid (x,y)=0, \ \forall y\in W\}$.
For any $\mathbf{v} \in \Lambda_{\mathfrak{c},d}$, define 
$$
\mathscr{F}_{\mathbf{v}}=\{ F=( 0=V_0 \subset V_1 \subset \cdots \subset V_{N}=V)\ \mid \  V_i=V_{N-i}^{\perp} ,\ \text{dim}( V_{i}/V_{i-1})= v_i,\  \forall i \}. 
$$
The natural $G$-action on $V$ induces a natural transitive action of $G$ on $\mathscr{F}_{\mathbf{v}}$, and thus 
\begin{align}
 \label{eq:F}
\mathscr{F} = \bigsqcup _{\mathbf{v} \in \Lambda_{\mathfrak{c},d}} \mathscr{F}_{\mathbf{v}} 
\end{align}
is a $G$-variety called the {\em $N$-step partial flag variety}. Let $\{\epsilon_i\mid 1\leq i\leq 2d\}$ be the standard basis of $V$. Let $F_\mathbf{v}$ be the flag in $\mathscr{F}_{\mathbf{v}}$ such that for $1\leq i\leq n$, $V_i=\Span\{\epsilon_j\mid j\in [\mathbf{v}]_i\}$ and $V_{N-i}=V_i^\perp$. Let $P_\mathbf{v}$ be the stabilizer of the flag $F_\mathbf{v}$ inside $G$, then 
\begin{equation}\label{equ:FvGP}
    G/P_{\mathbf{v}}\simeq \mathscr{F}_{\mathbf{v}}.
\end{equation}
Let 
\[\mathscr{M}:=T^*\mathscr{F}\]
be the cotangent bundle of $\mathscr{F}$.

Let $W_{\mathfrak{c}} = \mathbb{Z}_2^{d}\rtimes S_{d}$ be the Weyl group of type $C_d$, which
has a natural action on the set $\{1,2, \ldots,2d\}$. For any $\mathbf{v}=(v_1,\cdots,v_N)\in \Lambda_{\mathfrak{c},d}$, let $\bar{v}_i:=v_1+\cdots v_i$, $[\mathbf{v}]_i=[1+\bar{v}_{i-1},\bar{v}_i]$ for $1\leq i\leq N$, and $[\mathbf{v}]^{\mathfrak{c}}_{n+1}:=[1+\bar{v}_n,d]$.
Let $$[\mathbf{v}]^{\mathfrak{c}}=([\mathbf{v}]_1,\cdots,[\mathbf{v}]_n,[\mathbf{v}]^{\mathfrak{c}}_{n+1}),$$ and $$W_{[\mathbf{v}]^{\mathfrak{c}}}:=S_{[\mathbf{v}]_1}\times\cdots\times S_{[\mathbf{v}]_n}\times(\mathbb{Z}_{2}^{|[\mathbf{v}]^{\mathfrak{c}}_{n+1}|}\rtimes S_{[\mathbf{v}]_{n+1}^{\mathfrak{c}}})\subset W_{\mathfrak{c}}.$$ It is the Weyl subgroup corresponding to the parabolic subgroup $P_\mathbf{v}$. 

Denote
\begin{align}
\Xi_d=\Big\{A=(a_{ij}) \in {\rm Mat}_{N\times N}(\mathbb{N})\ | \ \sum_{i,j}a_{ij}=2d,\ a_{ij}=a_{N+1-i,N+1-j},\ \forall \  i, j \Big\}.
\end{align}
To a matrix $A \in \Xi_d$, we associate a partition of the set $\{1,2,\cdots,2d\}$ 
    \begin{equation*}
      [A]=([A]_{11}, \cdots [A]_{1N}, [A]_{21},\cdots, [A]_{NN}),
    \end{equation*}
where $[A]_{ij} =[\sum\limits_{(h,k)< (i,j)}a_{hk}+1 , \sum\limits_{(h,k)<(i,j)}a_{hk}+a_{ij}]\subseteq \mathbb{N}$,
and $<$ is the  left lexicographical order, i.e.,
$$(h,k) < (i,j) \Leftrightarrow h<i  ~\text{or}~ (h = i ~\text{and}~  k<j).$$ 
Let $[A]^{\mathfrak{c}}_{n+1,n+1}= [\sum_{(h,k)< (n+1,n+1)}a_{hk}+1, d]$.
Define $[A]^{\mathfrak{c}}$ to be the following partition of the set $\{1,2,\cdots,d\}$,
\[[A]^{\mathfrak{c}} = ([A]_{11}, \cdots [A]_{1N}, [A]_{21},\cdots, [A]_{n+1,n}, [A]_{n+1,n+1}^{\mathfrak{c}}).
\]
and define subgroups of the Weyl group by 
\[W_{[A]^{\mathfrak{c}}}=S_{[A]_{11}}\times\dotsb \times S_{[A]_{1,N}}\times S_{[A]_{21}} \times\dotsb\times S_{[A]_{n+1,n}}\times(\mathbb{Z}_2^{|[A]_{n+1,n+1}^{\mathfrak{c}}|}\rtimes S_{[A]_{n+1,n+1}^{\mathfrak{c}}}).\]

For any matrix $A \in \Xi_d$, denote
\begin{equation*}
  \ro(A) = (\sum_j a_{ij})_{i=1, 2,\cdots, N}\in\Lambda_\mathfrak{c},\quad\ {\rm and}\  \co(A) = (\sum_i a_{ij})_{j=1,2, \cdots, N}\in\Lambda_\mathfrak{c}.
\end{equation*}
For any $\mathbf{v}, \mathbf{w} \in \Lambda_{\mathfrak{c}}$, let
$$\Xi_d(\mathbf{v},\mathbf{w}) = \{ A \in \Xi_d \mid \ro(M) = \mathbf{v},\  \co(A) = \mathbf{w} \}.$$

For a pair of flags $(F,F')\in \mathscr{F}_{\mathbf{v}}\times \mathscr{F}_{\mathbf{w}}$, define an $N\times N$ matrix $A=(a_{i,j})$ by setting
\begin{equation}\label{equ:ObirtA}
    a_{i,j}=\dim \frac{V_i\cap V_j'}{V_{i-1}\cap V_j'+V_{i}\cap V_{j-1}'}.
\end{equation}
Then it is easy to show that 
\begin{equation}\label{equ:ViV''j}
    \dim(V_i\cap V'_j)=\sum\limits_{s\le i,t\le j}a_{st}.
\end{equation}
It has been shown in \cite[Section 6]{BKLW18} that this gives a bijection between the diagonal $G$-orbits in $\mathscr{F}_{\mathbf{v}}\times \mathscr{F}_{\mathbf{w}}$ and $\Xi_d(\mathbf{v},\mathbf{w})$. For any $A\in \Xi_d$, let $\mathscr{O}_A$ denote the corresponding $G$-orbit on $\mathscr{F}\times \mathscr{F}$, and let $Z_A$ denote its conormal bundle in $\mathscr{M}\times \mathscr{M}$. In particular, if $A=\diag(\mathbf{v})$ for some $\mathbf{v}\in \Xi_{\mathfrak{c},d}$, then $Z_{\diag(\mathbf{v})}$ is the diagonal copy of $T^*\mathscr{F}_\mathbf{v}$ inside $Z$. On the other hand, it is well known that the diagonal $G$-orbits on $\mathscr{F}_{\mathbf{v}}\times \mathscr{F}_{\mathbf{w}}$ are in bijection with the double cosets $W_{[\mathbf{v}]^{\mathfrak{c}}}\backslash W/W_{[\mathbf{w}]^{\mathfrak{c}}}$. 
 
We can define an order $\preceq$ on $\Xi_d$ as follows.
For any $A=(a_{ij}), B=(b_{ij})\in \Xi_d$, $A \preceq B $ if and only if
\begin{align}\label{equ:order}
\ro(A) = \ro(B),\ \co(A)=\co(B),\ {\rm and}\ \sum_{r\leq i; s\geq j} a_{rs} \leq \sum_{r\leq i; s\geq j} b_{rs}, \  \forall  i<j.
\end{align}
This order is compatible with the Bruhat order on $W_{\mathfrak{c}}$ via the above bijection.
\begin{prop}\cite{BKLW18}\label{prop:orbit}
For any $A,B \in \Xi_d$, $A \preceq B$ if $\mathcal{O}_A \subseteq  \overline{\mathcal{O}}_B$.
\end{prop}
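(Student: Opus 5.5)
Since $\mathcal{O}_A$ and $\mathcal{O}_B$ are $G$-orbits, the inclusion $\mathcal{O}_A\subseteq\overline{\mathcal{O}}_B$ holds if and only if some (equivalently every) pair $(F,F')\in\mathcal{O}_A$ is a limit of pairs in $\mathcal{O}_B$. The plan is to express each quantity $\sum_{r\le i,\,s\ge j}a_{rs}$ through a rank function that is semicontinuous on $\mathscr{F}_{\mathbf v}\times\mathscr{F}_{\mathbf w}$. Semicontinuity then forces the inequality in \eqref{equ:order}. First, the conditions $\ro(A)=\ro(B)$ and $\co(A)=\co(B)$ hold automatically: $\overline{\mathcal{O}}_B$ lies in the connected component $\mathscr{F}_{\mathbf v}\times\mathscr{F}_{\mathbf w}$ containing $\mathcal{O}_B$, and the row and column sums of the matrix attached by \eqref{equ:ObirtA} record $\dim V_i/V_{i-1}$ and $\dim V'_j/V'_{j-1}$.

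For the inequality, fix $i<j$. Using \eqref{equ:ViV''j}, we have $\dim(V_i\cap V'_{j-1})=\sum_{r\le i,\,s\le j-1}a_{rs}$. Since $\sum_{s}a_{rs}=v_r$, this gives
\[
\sum_{r\le i,\; s\ge j}a_{rs}=\bar v_i-\dim(V_i\cap V'_{j-1}),
\]
where $\bar v_i=\dim V_i$ is constant on $\mathscr{F}_{\mathbf v}\times\mathscr{F}_{\mathbf w}$. The claimed inequality for $A\preceq B$ is therefore equivalent to $\dim(V_i\cap V'_{j-1})\ge \dim(U_i\cap U'_{j-1})$, where $(F,F')\in\mathcal{O}_A$ and $(E,E')\in\mathcal{O}_B$ with $E=(U_k)$, $E'=(U'_k)$.

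The key input is upper semicontinuity of $(F,F')\mapsto\dim(V_i\cap V'_{j-1})$. This dimension equals $\dim V_i+\dim V'_{j-1}-\operatorname{rank}(V_i\oplus V'_{j-1}\to V)$. The dimensions are constant, and the rank of the map of tautological bundles is lower semicontinuous. Hence $\{\dim(V_i\cap V'_{j-1})\ge m\}$ is Zariski closed for every $m$. Take $m=\dim(U_i\cap U'_{j-1})$, the constant value on $\mathcal{O}_B$. This closed set contains $\mathcal{O}_B$, hence contains $\overline{\mathcal{O}}_B\supseteq\mathcal{O}_A$, which gives the required inequality for all $i<j$.

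No real obstacle is expected. The only care needed is in the bookkeeping identity: one must check that \eqref{equ:ViV''j} holds with the definition \eqref{equ:ObirtA}, including at the boundary indices $j-1=0$ and $i=N$. The symmetry $a_{ij}=a_{N+1-i,N+1-j}$ plays no role, so the argument is really the type A semicontinuity argument restricted to the isotropic flags.
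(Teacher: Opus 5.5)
Your proof is correct. The row and column conditions follow from $\overline{\mathcal{O}}_B\subseteq\mathscr{F}_{\mathbf v}\times\mathscr{F}_{\mathbf w}$, and the identity $\sum_{r\le i,\,s\ge j}a_{rs}=\bar v_i-\dim(V_i\cap V'_{j-1})$ together with upper semicontinuity of $\dim(V_i\cap V'_{j-1})$ gives exactly the inequalities in \eqref{equ:order}. The paper gives no proof of its own and only cites \cite{BKLW18}, where this direction is obtained essentially as you do: via the type A semicontinuity argument of \cite{BLM} on the ambient $N$-step flags of $\C^{2d}$, as your closing remark anticipates.
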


For $A,B\in\Xi_d$ such that $\co(A)=\ro(B)$, the set $p_{12}^{-1}(\mathcal{O}_A)\cap p_{23}^{-1}(\mathcal{O}_B)$ is $G$-stable. Let $$\mathbf{M}(A,B):=\{C\in\Xi_d\mid \mathcal{O}_C\subset p_{13}(p_{12}^{-1}(\mathcal{O}_A)\cap p_{23}^{-1}(\mathcal{O}_B))\}.$$
By the same argument as in \cite[Proposition 5(c)]{V98}, we get
\begin{prop}\label{p-fiber}
There exists a unique element $A\circ B\in \mathbf{M}(A,B)$, such that $\mathbf{M}(A,B)\subset \{C\mid C\preceq A\circ B\}$. 
\end{prop}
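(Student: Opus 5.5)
The plan is to adapt Vasserot's argument for \cite[Proposition 5(c)]{V98} to the isotropic setting. The first step is to parametrize the composite set explicitly. Fix $F\in\mathscr{F}_{\ro(A)}$ and $F''\in\mathscr{F}_{\co(B)}$. The set $p_{12}^{-1}(\mathcal{O}_A)\cap p_{23}^{-1}(\mathcal{O}_B)$ consists of triples $(F,F',F'')$ with $(F,F')\in\mathcal{O}_A$ and $(F',F'')\in\mathcal{O}_B$. Its image under $p_{13}$ is a $G$-stable subset of $\mathscr{F}_{\ro(A)}\times\mathscr{F}_{\co(B)}$, so it is a finite union of orbits $\mathcal{O}_C$, and these $C$ form $\mathbf{M}(A,B)$.

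Next I would show that this image is irreducible. Consider the projection $p_{12}^{-1}(\mathcal{O}_A)\cap p_{23}^{-1}(\mathcal{O}_B)\to\mathcal{O}_A$. It is a $G$-equivariant fibration over a homogeneous space. Its fiber over $(F,F')$ is the set of $F''$ with $(F',F'')\in\mathcal{O}_B$, which is a single orbit of the stabilizer $\mathrm{Stab}_G(F')=P_{\co(A)}$ and hence irreducible, since parabolic subgroups are connected. The total space is therefore irreducible; equivalently it is $G\times_{P}(P\cdot F''_0)$ for suitable base points. The image under $p_{13}$ is then an irreducible, $G$-stable, constructible set, so it contains a unique open dense $G$-orbit $\mathcal{O}_{A\circ B}$. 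We then have $p_{13}(\cdots)\subset\overline{\mathcal{O}}_{A\circ B}$, and uniqueness follows because any other $C$ with this property would also have orbit dense in the image.

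Finally, for any $C\in\mathbf{M}(A,B)$ we have $\mathcal{O}_C\subset\overline{\mathcal{O}}_{A\circ B}$, and Proposition \ref{prop:orbit} gives $C\preceq A\circ B$. This is exactly the claim. One should check that Proposition \ref{prop:orbit} is applied in the direction it is stated, namely that orbit closure inclusion implies $\preceq$, which it is. Uniqueness of the element within $\mathbf{M}(A,B)$ requires antisymmetry of $\preceq$. This holds because $\preceq$ is defined by inequalities of partial sums together with equal row and column sums. Equality of all the upper-right sums $\sum_{r\le i,s\ge j}$ for $i<j$, combined with the symmetry $a_{ij}=a_{N+1-i,N+1-j}$ and the fixed row and column sums, recovers the matrix. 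So if $C\preceq C'$ and $C'\preceq C$ then $C=C'$.

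The main obstacle I expect is the connectedness and irreducibility step in the symplectic setting. One needs the stabilizer of an isotropic flag in $\Sp(V)$ to be connected, and it is, being a parabolic subgroup. One also needs the orbit $\mathcal{O}_B$ to be a single $G$-orbit, which is given by the bijection of \cite[Section 6]{BKLW18}. Everything else is formal.
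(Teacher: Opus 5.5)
Your proof is correct and is the standard argument the paper relies on when it cites \cite[Proposition 5(c)]{V98}. Irreducibility of $p_{12}^{-1}(\mathcal{O}_A)\cap p_{23}^{-1}(\mathcal{O}_B)$, which follows from connectedness of $G$ and of the parabolic $\mathrm{Stab}_G(F')$, gives a unique dense orbit in the image. Proposition~\ref{prop:orbit} converts closure inclusion into $\preceq$, and your antisymmetry check using $a_{ij}=a_{N+1-i,N+1-j}$ gives uniqueness.

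One small slip: set $P=\mathrm{Stab}_G(F'_0)$ and $H=\mathrm{Stab}_G(F_0)\cap P$, so that $\mathcal{O}_A\simeq G/H$. Then the total space is $G\times_H(P\cdot F''_0)$, not $G\times_P(P\cdot F''_0)$; the latter is just $\mathcal{O}_B$. Irreducibility is unaffected, because the total space is the image of the irreducible variety $G\times P$ under $(g,p)\mapsto(gF_0,gF'_0,gpF''_0)$.
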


Let $E_{ij}$ be the standard $N\times N$ matrix unit with 1 at $(i,j)$-entry. For $\mathbf{v}\in \Lambda_{\mathfrak{c},d-a}$,  define 
\begin{align} \label{eq:Eijv}
E_{ij}^{\theta} :=  E_{ij} + E_{\tau i +1,\tau j+1}, \qquad
E_{ij}^{\theta}(\mathbf{v},a):= \diag(\mathbf{v}) +  aE_{ij}^{\theta}.
\end{align} 
Then $E_{ij}^{\theta}=E_{\tau i+1,\tau j+1}^{\theta}$. Let $\mathbf{e}_i$ ($1\leq i\leq N$) be the standard basis for $\C^N$ (viewed as row vectors). From definition, we get
\begin{align}\label{equ:OA}
		\mathcal{O}_{E_{i,i+1}^{\theta}(\mathbf{v},a)}=&\bigg\{(F,F')\mid 
		F=(V_k)_{0\le k \le N} 
			, F'=(V'_k)_{0\le k \le N},\\
			& V'_i\stackrel{a}{\subset} V_i, V_k=V'_k \textit{ if } k\neq i,\tau i\bigg\}\subset \mathscr{F}_{\mathbf{v}+a\mathbf{e}_i+a\mathbf{e}_{\tau i+1} }\times \mathscr{F}_{\mathbf{v}+a\mathbf{e}_{i+1}+a\mathbf{e}_{\tau i} },\notag
\end{align}
and it is a closed orbit. Here $V'_i\stackrel{a}{\subset} V_i$ means that $V_i'$ is a vector subspace in $V_i$ of codimension $a$.

\begin{lem}\label{lem:MAB}
    Fix $a\in\mathbb{Z}_{\geq 0}$,  $\mathbf{v}\in\Lambda_{\mathfrak{c},d-a}$, and let $A=E_{h,h+1}^{\theta}(\mathbf{v},a)$.
    For $B\in\Xi_d$ with $\co(A)=\ro(B)$, if $h\neq n$,
    \[\mathbf{M}(A,B)=\bigg\{B+\sum_{1\leq j\leq N} s_j(E^\theta_{hj}-E^\theta_{h+1,j})\mid s_j\in \mathbb{Z}_{\geq 0}, s_j\leq b_{h+1,j},\sum_j s_j=a\bigg\},\]
    and if $h=n$,
    \[\mathbf{M}(A,B)=\bigg\{B+\sum_{1\leq j\leq N} s_j(E^\theta_{nj}-E^\theta_{n+1,j})\mid s_j\in \mathbb{Z}_{\geq 0}, s_j+s_{\tau j+1}\leq b_{n+1,j},\sum_j s_j=a\bigg\}.\]
\end{lem}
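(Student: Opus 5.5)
Fix $(F,F')\in\mathcal{O}_A$ and $(F',F'')\in\mathcal{O}_B$, where $F=(V_k)$, $F'=(V'_k)$, $F''=(V''_k)$. By \eqref{equ:OA}, the flag $F'$ coincides with $F$ except in positions $h$ and $\tau h$, where $V'_h\stackrel{a}{\subset}V_h$. Since $V_{\tau h}=V_h^\perp$ and $V'_{\tau h}=V_h'^\perp$, we also have $V_{\tau h}\stackrel{a}{\subset}V'_{\tau h}$. The plan is to compute the matrix $C$ of the pair $(F,F'')$ through the dimensions $\dim(V_i\cap V''_j)$ and formula \eqref{equ:ViV''j}. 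We then compare these with the corresponding dimensions $\dim(V'_i\cap V''_j)$ coming from $B$. Conversely, every admissible choice will be realized by choosing $V_h$ suitably.

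First take $h\neq n$, so that the positions $h$ and $\tau h$ are distinct and $h+1\le n$. For $i\neq h,\tau h$ we get $\dim(V_i\cap V''_j)=\dim(V'_i\cap V''_j)$. For $i=h$, the space $V_h$ lies between $V'_h$ and $V'_{h+1}=V_{h+1}$ and has codimension $a$ over $V'_h$. Set $t_j=\dim\big((V_h\cap V''_j)/(V'_h\cap V''_j)\big)$; this is nondecreasing in $j$, satisfies $t_N=a$, and obeys $t_j-t_{j-1}\le b_{h+1,j}$. Put $s_j=t_j-t_{j-1}$. Then row $h$ of $C$ is row $h$ of $B$ plus $s$, and row $h+1$ is row $h+1$ of $B$ minus $s$, using \eqref{equ:ViV''j} for consecutive rows. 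The rows $\tau h,\tau h+1$ are then forced by the symmetry $c_{ij}=c_{N+1-i,N+1-j}$ of $\Xi_d$ (alternatively, one computes them directly via $\perp$-duality). This gives $C=B+\sum_j s_j E^\theta_{hj}-\sum_j s_jE^\theta_{h+1,j}$. For the converse, given such $s_j$, choose a flag-adapted basis of $V'_{h+1}/V'_h$ compatible with the filtration induced by $V''$, whose graded pieces have dimensions $b_{h+1,j}$. Pick $s_j$ vectors from the $j$-th piece and let $V_h$ be $V'_h$ plus their span. Here $h+1\le n$ guarantees that $V_{h+1}$ is isotropic, so $V_h$ is automatically isotropic and $V_{\tau h}=V_h^\perp$ gives a valid flag in $\mathscr{F}$.

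For $h=n$ the positions $h=n$ and $\tau h=n+1$ are adjacent, and this is where I expect the main obstacle. Now $V'_n\subset V_n\subset V_{n+1}\subset V'_{n+1}=V_n'^\perp$, so $V_n/V'_n$ is an $a$-dimensional isotropic subspace of the symplectic space $V_n'^\perp/V'_n$. The filtration induced by $V''_j$ on this quotient is self-dual: the piece indexed by $j$ pairs with the piece indexed by $\tau j+1$. An isotropic subspace meeting the $j$-th graded piece in dimension $s_j$ must satisfy $s_j+s_{\tau j+1}\le b_{n+1,j}$. To prove this, I would use the induced nondegenerate pairing between graded piece $j$ and graded piece $\tau j+1$, whose dimensions agree because $b_{n+1,j}=b_{n+1,\tau j+1}$. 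The images of $V_n$ in these two pieces are mutually orthogonal, which yields the inequality. Conversely, one chooses a basis adapted to a Witt-type decomposition of the filtered symplectic space, with the $j$-th piece paired against the $(\tau j+1)$-th; choosing $s_j$ vectors in piece $j$ and $s_{\tau j+1}$ in the dual piece from complementary positions gives an isotropic subspace realizing any admissible $(s_j)$. The middle index $j=n+1$, which is self-paired, needs $2s_{n+1}\le b_{n+1,n+1}$; there one picks a Lagrangian-type subspace inside that piece. Finally, the entry bookkeeping is the same as in the first case: the change in row $n$ is $s$ and the change in row $n+1$ is $-(s_j+s_{\tau j+1})$, which is exactly what $\sum_j s_j(E^\theta_{nj}-E^\theta_{n+1,j})$ produces because $E^\theta_{n+1,j}=E_{n+1,j}+E_{n+1,\tau j+1}$.
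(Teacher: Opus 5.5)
Your treatment of $1\le h\le n-1$ and of $h=n$ is correct. In fact your realizability constructions (an adapted basis, and for $h=n$ a Witt-type basis with an isotropic choice in the self-paired middle piece) supply the inclusion $\supseteq$, which the paper's proof never checks. The paper only runs the dimension count you describe and reads the constraints off from nonnegativity of $c_{h+1,j}$, resp.\ $c_{n+1,j}$.

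The gap is in your case split. The lemma concerns all $1\le h\le 2n$, and $h\neq n$ does not imply $h+1\le n$. You never treat $h\ge n+1$, yet these cases are needed: in Corollary~\ref{cor:AcircB} the row index $h$ of the pivot $(h,l)$ can be any element of $[1,2n]$. For $h\ge n+2$ this is only an oversight. There $h$ and $\tau h\le n-1$ are far apart, so the same bookkeeping applies. In the converse you replace \emph{$V_{h+1}$ is isotropic} by $V_h^\perp\subseteq V_h'^\perp=V'_{\tau h}\subseteq V'_h\subseteq V_h$, so $V_h$ is automatically coisotropic and $V_h^\perp$ still defines a valid flag.

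The case $h=n+1$, however, is covered by neither of your arguments. Here $\tau h=n$ is adjacent to $h$, which is the feature you attributed only to $h=n$. We have $V'_{n+1}\stackrel{a}{\subset}V_{n+1}\subset V'_{n+2}$ and at the same time $V_n=V_{n+1}^\perp\stackrel{a}{\subset}V'_n$. So your claim that row $h$ of $C$ is row $h$ of $B$ plus $s$ fails, because $V_{h-1}=V_n\neq V'_n$. Your symmetry step also collides with it, since row $\tau h+1$ is row $h$ itself.

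The correct computation sets $t_j=\dim V_{n+1}\cap V''_j-\dim V'_{n+1}\cap V''_j$ and uses $\perp$-duality to get $\dim V_n\cap V''_{\tau j}=\dim V'_n\cap V''_{\tau j}+t_j-a$. One then finds $c_{n+2,j}=b_{n+2,j}-s_j$, $c_{n,\tau j+1}=b_{n,\tau j+1}-s_j$ and $c_{n+1,j}=b_{n+1,j}+s_j+s_{\tau j+1}$. This is again $B+\sum_j s_j(E^\theta_{n+1,j}-E^\theta_{n+2,j})$, because $E^\theta_{n+1,j}=E_{n+1,j}+E_{n+1,\tau j+1}$, with the generic constraint $s_j\le b_{n+2,j}$. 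Realizability is easy here, since every $V_{n+1}$ with $V'_{n+1}\subseteq V_{n+1}\subseteq V'_{n+2}$ is coisotropic. Still, this is a separate computation (the paper's final case) that your proposal omits, and your stated reasoning would not produce it.
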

\begin{proof}
    Pick $(F,F',F'')\in p_{12}^{-1}(\mathcal{O}_A)\cap p_{23}^{-1}(\mathcal{O}_B)$, and assume $(F,F'')\in \mathcal{O}_C$ for some matrix $C$. Then by \eqref{equ:ObirtA} and the above formula \eqref{equ:OA} for $\mathcal{O}_A$, $c_{ij}=b_{ij}$ if $i\neq h, h+1, \tau h, \tau h+1$. 
    
    Let us first assume $h\neq n,n+1$, then $\tau h\neq h+1$ and $\tau h\neq h-1$.
    Suppose 
    \[\dim V_{h}\cap V_j'' = \dim V'_{h}\cap V_j''+\sum_{1\leq i\leq j}s_i\]
    for some $s_i\in \mathbb{Z}_{\geq 0}$. Then $\sum_i s_i=a$. Using the fact that for any two linear subspaces $U_1,U_2\subset V$, \[\dim (U_1^\perp\cap U_2^\perp)=\dim (U_1+U_2)^\perp=2d-\dim U_1-\dim U_2+\dim U_1\cap U_2,\] we get 
    \begin{align*}
    \dim V_{\tau h}\cap V_{\tau j}'' 
    =&2d-\dim V_h-\dim V_{j}''+\dim V_{h}\cap V_j''\\
    =&\dim V'_{\tau h}\cap V_{\tau j}''+\sum_{1\leq i\leq j}s_i-a.
    \end{align*} 
    Therefore,
    \begin{align*}
    	c_{h+1,j}=&\dim V_{h+1}\cap V_j''/(V_{h}\cap V_j''+V_{h+1}\cap V_{j-1}'')\\
    	=&\dim V'_{h+1}\cap V_j''-\dim V_{h}\cap V_j''-\dim V'_{h+1}\cap V_{j-1}''+\dim  V_{h}\cap V_{j-1}''\\
    	=&b_{h+1,j}-s_j,
    \end{align*}
    \begin{align*}
    	c_{h,j}=&\dim V_{h}\cap V_j''/(V_{h-1}\cap V_j''+V_{h}\cap V_{j-1}'')\\
    	=&\dim V_{h}\cap V_j''-\dim V'_{h-1}\cap V_j''-\dim V_{h}\cap V_{j-1}''+\dim  V'_{h-1}\cap V_{j-1}''\\
    	=&b_{h,j}+s_j,
    \end{align*}
    \begin{align*}
    	c_{\tau h+1,\tau j+1}=&\dim V_{\tau h+1}\cap V_{\tau j+1}''/(V_{\tau h}\cap V_{\tau j+1}''+V_{\tau h+1}\cap V_{\tau j}'')\\
    	=&\dim V'_{\tau h+1}\cap V_{\tau j+1}''-\dim V_{\tau h}\cap V_{\tau j+1}''-\dim V'_{\tau h+1}\cap V_{\tau j}''+\dim V_{\tau h}\cap V_{\tau j}''\\
    	=&b_{\tau h+1,\tau j+1}+s_j,
    \end{align*}
    and
    \begin{align*}
    	c_{\tau h,\tau j+1}=&\dim V_{\tau h}\cap V_{\tau j+1}''/(V_{\tau h-1}\cap V_{\tau j+1}''+V_{\tau h}\cap V_{\tau j}'')\\
    	=&\dim V_{\tau h}\cap V_{\tau j+1}''-\dim V'_{\tau h-1}\cap V_{\tau j+1}''-\dim V_{\tau h}\cap V_{\tau j}''+\dim V'_{\tau h-1}\cap V_{\tau j}''\\
    	=&b_{\tau h,\tau j+1}-s_j.
    \end{align*}
    Thus, $C=B+\sum_{1\leq j\leq N} s_j(E^\theta_{hj}-E^\theta_{h+1,j})$. 
    
    Now let us assume $h=n$. Suppose 
    \[\dim V_{n}\cap V_j'' = \dim V'_{n}\cap V_j''+\sum_{1\leq i\leq j}s_i\]
    for some $s_i\in \mathbb{Z}_{\geq 0}$. Then $\sum_i s_i=a$, and  
    \begin{align*}
    	\dim V_{n+1}\cap V_{\tau j}'' 
    	=&\dim V'_{n+1}\cap V_{\tau j}''+\sum_{1\leq i\leq j}s_i-a.
    \end{align*} 
    Therefore,
    \begin{align*}
    	c_{n,j}=&\dim V_{n}\cap V_j''/(V_{n-1}\cap V_j''+V_{n}\cap V_{j-1}'')\\
    	=&\dim V_{n}\cap V_j''-\dim V'_{n-1}\cap V_j''-\dim V_{n}\cap V_{j-1}''+\dim  V'_{n-1}\cap V_{j-1}''\\
    	=&b_{n,j}+s_j,
    \end{align*}
    \begin{align*}
    	c_{n+2,\tau j+1}=&\dim V_{n+2}\cap V_{\tau j+1}''/(V_{n+1}\cap V_{\tau j+1}''+V_{n+2}\cap V_{\tau j}'')\\
    	=&\dim V'_{n+2}\cap V_{\tau j+1}''-\dim V_{n+1}\cap V_{\tau j+1}''-\dim V'_{n+2}\cap V_{\tau j}''+\dim V_{n+1}\cap V_{\tau j}''\\
    	=&b_{n+2,\tau j+1}+s_j,
    \end{align*}
    and 
    \begin{align*}
    	c_{n+1,j}=&\dim V_{n+1}\cap V_j''/(V_{n}\cap V_j''+V_{n+1}\cap V_{j-1}'')\\
    	=&\dim V_{n+1}\cap V_j''-\dim V_{n}\cap V_j''-\dim V_{n+1}\cap V_{j-1}''+\dim  V_{n}\cap V_{j-1}''\\
    	=&b_{n+1,j}-s_j-s_{\tau j+1}\geq 0.
    \end{align*}
    Thus, \begin{align*}
        C&=B+\sum_{1\leq j\leq N} s_jE^\theta_{nj}-\sum_{1\leq j\leq N}s_jE_{n+1,j}-\sum_{1\leq j\leq N}s_{\tau j+1}E_{n+1,j}\\&=B+\sum_{1\leq j\leq N} s_j(E^\theta_{nj}-E^\theta_{n+1,j}).
    \end{align*} 
    
    Finally, let us deal with the case $h=n+1$. As before, suppose 
    \[\dim V_{n+1}\cap V_j'' = \dim V'_{n+1}\cap V_j''+\sum_{1\leq i\leq j}s_i\]
    for some $s_i\in \mathbb{Z}_{\geq 0}$. Then $\sum_i s_i=a$, and  
    \begin{align*}
    	\dim V_{n}\cap V_{\tau j}'' 
    	=&\dim V'_{n}\cap V_{\tau j}''+\sum_{1\leq i\leq j}s_i-a.
    \end{align*} 
    Therefore,
     \begin{align*}
    	c_{n,\tau j+1}=&\dim V_{n}\cap V_{\tau j+1}''/(V_{n-1}\cap V_{\tau j +1}''+V_{n}\cap V_{\tau j}'')\\
    	=&\dim V_{n}\cap V_{\tau j+1}''-\dim V'_{n-1}\cap V_{\tau j+1}''-\dim V_{n}\cap V_{\tau j}''+\dim  V'_{n-1}\cap V_{\tau j}''\\
    	=&b_{n,\tau j}-s_{j},
    \end{align*}
    \begin{align*}
    	c_{n+2,j}=&\dim V_{n+2}\cap V_{j}''/(V_{n+1}\cap V_{j}''+V_{n+2}\cap V_{j-1}'')\\
    	=&\dim V'_{n+2}\cap V_{j}''-\dim V_{n+1}\cap V_{j}''-\dim V'_{n+2}\cap V_{j-1}''+\dim V_{n+1}\cap V_{j-1}''\\
    	=&b_{n+2,j}-s_j,
    \end{align*}
    and 
    \begin{align*}
    	c_{n+1,j}=&\dim V_{n+1}\cap V_j''/(V_{n}\cap V_j''+V_{n+1}\cap V_{j-1}'')\\
    	=&\dim V_{n+1}\cap V_j''-\dim V_{n}\cap V_j''-\dim V_{n+1}\cap V_{j-1}''+\dim  V_{n}\cap V_{j-1}''\\
    	=&b_{n+1,j}+s_j+s_{\tau j+1}.
    \end{align*}
    Thus, \begin{align*}
        C=&B-\sum_{1\leq j\leq N} s_j(E_{n,\tau j+1}+E_{n+2,j})+\sum_{1\leq j\leq N}(s_j+s_{\tau j+1})E_{n+1,j}\\
        =&B+\sum_{1\leq j\leq N} s_j(E^\theta_{n+1,j}-E^\theta_{n+2,j}).
    \end{align*} This finishes the proof of the lemma. 
\end{proof}

Let $C\in \Xi_d(\mathbf{v},\mathbf{w})$ be a non-diagonal matrix, and let
\[(h,l):=\max\{(i,j)\mid 1\leq i< j\leq N, c_{ij}\neq 0\},\]
with respect to the right-lexicographic order, i.e., $(i,j)<(k,l)$ if $j<l$ or $j=l$ and $i<k$. Let 
\[B=C+c_{hl}(E_{h+1,l}^\theta-E_{hl}^\theta),\quad \text{and}\quad A=E^\theta_{h,h+1}(\mathbf{v}-c_{hl}\mathbf{e}_h-c_{hl}\mathbf{e}_{N+1-h},c_{hl}).\] Then $\co(A)=\ro(B)$ since for any $j$,
\begin{align*}
    \sum_{i}a_{ij} &= a_{jj}+\delta_{j,h+1}c_{h,l}+\delta_{j,N-h}c_{hl}\\   
    &= v_j-\delta_{j,h}c_{hl}-\delta_{j,N+1-h}c_{hl}+\delta_{j,h+1}c_{h,l}+\delta_{j,N-h}c_{hl}\\
    &= \sum_i c_{ji}-\delta_{j,h}c_{hl}-\delta_{j,N+1-h}c_{hl}+\delta_{j,h+1}c_{h,l}+\delta_{j,N-h}c_{hl}
    =\sum_i b_{ji}.
\end{align*}
\begin{cor}\label{cor:AcircB}
     Let $A,B,C$ be as above, and let $\mathcal{O}_T:=p_{12}^{-1}(\mathcal{O}_A)\cap p_{23}^{-1}(\mathcal{O}_B)\cap p_{13}^{-1}(\mathcal{O}_C)$. Then
     \begin{enumerate}
         \item $A\circ B=C$;
         \item  the projection $p_{13}|_{\mathcal{O}_T}:\mathcal{O}_T\rightarrow \mathcal{O}_C$ is an ismorphism.
     \end{enumerate}
\end{cor}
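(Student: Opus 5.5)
Part (1) follows from Lemma \ref{lem:MAB} together with the uniqueness statement in Proposition \ref{p-fiber}. Part (2) follows by showing that the fibres of $p_{13}|_{\mathcal{O}_T}$ are single points and then using homogeneity.

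\emph{Part (1).} By Lemma \ref{lem:MAB}, $\mathbf{M}(A,B)$ consists of the matrices
\[C'=B+\sum_j s_j(E^\theta_{hj}-E^\theta_{h+1,j}),\qquad \sum_j s_j=c_{hl},\]
subject to the constraints $s_j\le b_{h+1,j}$, or $s_j+s_{\tau j+1}\le b_{n+1,j}$ when $h=n$.

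\begin{enumerate}
\item The choice $s_l=c_{hl}$, with all other $s_j=0$, returns exactly $C$. It is admissible because $b_{h+1,l}=c_{h+1,l}+c_{hl}\ge c_{hl}$. For $h=n$ one also has to check the $\tau$-symmetric entry; I would do this using the symmetry $c_{ij}=c_{N+1-i,N+1-j}$.
\item Next I record what the maximality of $(h,l)$ in the right-lexicographic order gives:
\begin{itemize}
\item $c_{ij}=0$ whenever $i<j$ and $j>l$;
\item $c_{il}=0$ whenever $h<i<l$.
\end{itemize}
In particular $b_{h+1,s}=c_{h+1,s}=0$ for $s>\max(l,h+1)$. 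So in any $C'\in\mathbf{M}(A,B)$, the $s_j$ placed in upper-triangular positions $j>h$ must satisfy $j\le l$.
\item I then compare the partial sums $\sum_{r\le i,\,s\ge j}$ in \eqref{equ:order} for $i<j$. Replacing $C'$ by $C$ only moves mass between rows $h$ and $h+1$, together with their $\tau$-images. Hence for $i\ge h+1$ the two row changes cancel in the sum over $r\le i$, and for $i<h$ nothing changes. The only case left is $i=h$, and the mirrored rows $\tau h$, $\tau h+1$, which I would handle by the symmetry of the matrices.
\item For $i=h$, the quantity $C'$ adds is $\sum_{s\ge j}s_s$. For $C$ it is $c_{hl}$ when $j\le l$ and $0$ when $j>l$. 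Since $\sum_s s_s=c_{hl}$ and $s_s=0$ for $s>l$ in the relevant range, $C'\preceq C$.
\item So $C$ is a maximal element of $\mathbf{M}(A,B)$ that dominates every element. By Proposition \ref{p-fiber}, $C=A\circ B$.
\end{enumerate}

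The main obstacle is the bookkeeping in this comparison. It includes the mirrored rows $\tau h,\tau h+1$, and in the case $h=n$ both $s_j$ and $s_{\tau j+1}$ enter row $n+1$.

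\emph{Part (2).} Fix $(F,F'')\in\mathcal{O}_C$. I would determine the fibre explicitly.

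\begin{enumerate}
\item By \eqref{equ:OA}, any $F'$ in the fibre satisfies $V'_k=V_k$ for $k\neq h,\tau h$, and $V'_h\subset V_h$ has codimension $c_{hl}$. The space $V'_{\tau h}=V_h'^{\perp}$ is then determined by $V'_h$.
\item The computation in the proof of Lemma \ref{lem:MAB}, with $s$ concentrated at $l$, shows
\[\dim V'_h\cap V''_{j}=\dim V_h\cap V''_j\ \text{ for } j<l,\qquad V_{h-1}\subset V'_h.\]
Hence $V'_h\supseteq U:=V_{h-1}+(V_h\cap V''_{l-1})$.
\item I compute $\dim U$ using \eqref{equ:ViV''j} and the vanishing $c_{hj}=0$ for $j>l$. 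This gives $\dim V_h-\dim U=c_{hl}$.
\item Therefore $V'_h=U$ is forced, so the fibre contains at most one point. Since $C\in\mathbf{M}(A,B)$ by part (1), it contains exactly one point.
\end{enumerate}

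Thus $p_{13}|_{\mathcal{O}_T}$ is a $G$-equivariant bijection. Moreover $\mathcal{O}_T$ is a single $G$-orbit, because stabilizers correspond under the bijection. A bijective equivariant morphism of homogeneous $G$-varieties in characteristic $0$ is an isomorphism: the stabilizers coincide and the orbit maps are separable. This proves (2).
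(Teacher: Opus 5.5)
Your proposal is correct and follows the paper's proof: part (1) comes from Lemma \ref{lem:MAB} together with the order \eqref{equ:order}, where you spell out the dominance check that the paper leaves implicit, and part (2) from showing that $V'_h$ is forced to equal $V_{h-1}+(V_h\cap V''_{l-1})$. The differences are minor: you pin down this subspace by containment plus the dimension count from \eqref{equ:ViV''j} rather than via $V_h\subseteq V''_l$ and $b_{hl}=0$, which also handles $h=n+1$ cleanly (there $V'_{h-1}\neq V_{h-1}$, but still $V_{h-1}\subseteq V'_h$), and you add the homogeneity argument that upgrades the equivariant bijection to an isomorphism.
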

\begin{proof}
    The first statement follows from the above Lemma \ref{lem:MAB} and the partial order \eqref{equ:order}. The matrix $C$ corresponds to the case $s_j=\delta_{j,l}c_{hl}$. Notice that if $(h,l)=(n,n+1)$, then $b_{n+1,n+1}=c_{n+1,n+1}+2c_{n,n+1}\geq 2s_{n+1}$.

    Let us prove the second one. Let $(F,F',F'')\in \mathcal{O}$. Then it is enough to show that $F'$ is uniquely determined by $(F,F'')$. 
    Since $(F,F')\in \mathcal{O}_A$, we only need to show that $V_h'$ is uniquely determined by $(F,F'')$.  
    Recall the matrix $C$ corresponds to the choice of $s_j=\delta_{jl}c_{hl}$. Hence, 
    \[ V'_h\stackrel{c_{hl}}{\subset} V_h, \textit{ and } \dim V_{h}\cap V_j'' = \dim V'_{h}\cap V_j''+c_{hl}\delta_{j\geq l}.\]
    Notice that $\dim V_h=v_1+\cdots +v_h$, and  $\dim V'_h=v_1+\cdots +v_h-c_{hl}$. Recall \[(h,l)=\max\{(i,j)\mid 1\leq i<j\leq n, c_{ij}\neq 0\},\] we get by \eqref{equ:ViV''j}
	\begin{equation*}\label{eq}
	  \dim V_h\cap V_l''=v_1+\cdots +v_h=\dim V_h.  
	\end{equation*}
	Thus, $V_h\cap V_l''=V_h$. On the other hand,
	\[\dim V'_h\cap V_l''=\dim V_h\cap V_l''-c_{hl}=\dim V_h-c_{hl}=\dim V'_h.\]
	We get $V'_h\cap V_l''=V'_h$. Observe that $b_{hl}=0$, we get
	\[V'_h=V'_h\cap V''_l=V_{h-1}\cap V''_l+V'_h\cap V''_{l-1}=V_{h-1}\cap V''_l+V_h\cap V''_{l-1}.\]
	Hence, $V_h'$ is uniquely determined by $(F,F'')$.
\end{proof}

\subsection{A pushforward formula} \label{subsec:push}

Let 
\[
\mathbf{R}: = \mathbb{C}[x_1,x_2,\cdots,x_d]\simeq H_*^G(G/B), 
\]
where $B$ is a Borel subgroup of $G$. We shall define a natural action of the Weyl group $W_{\mathfrak{c}}$ on $\mathbf{R}$ below, which by restriction leads to actions of the subgroups $W_{[\mathbf{v}]^{\mathfrak{c}}}$ and $W_{[A]^{\mathfrak{c}}}$ on $\mathbf{R}$. The action of $\sigma \in S_{d}$ on $\mathbf{R}$ is given by
$$\sigma: \mathbf{R} \longrightarrow \mathbf{R}, 
\qquad
f(x_1,x_2,\cdots,x_d) \mapsto  f(x_{\sigma(1)},x_{\sigma(2)},\cdots,x_{\sigma(d)}).$$
For any $m\in [1,d]$, the generator $\iota_m$ in the m-th copy of $\mathbb{Z}_2$ in $\mathbb{Z}_2^d$ acts on $\mathbf{R}$ by
\begin{equation*}
  \begin{split}
\iota_m: \mathbf{R} &\longrightarrow \mathbf{R},\\
 f(x_1,\cdots,x_{m-1}, x_m,x_{m+1},\cdots x_d) &\mapsto  f(x_1,\cdots,x_{m-1}, -x_m,x_{m+1},\cdots x_d).
  \end{split}
\end{equation*}
For two subgroups of the Weyl group $W_1\subset W_2\subset W_\mathfrak{c}$, we define a map
 $$W_2/W_1 : \mathbf{R}^{W_1} \longrightarrow \mathbf{R}^{W_2}, \qquad f \mapsto \sum\limits_{\sigma \in W_2/W_1}\sigma(f).$$

To simplify the notations, we shall denote $\mathbf{R}^{W_{[\mathbf{v}]^{\mathfrak{c}}}}$ by $\mathbf{R}^{[\mathbf{v}]^{\mathfrak{c}}}$, 
and $\mathbf{R}^{W_{[A]^{\mathfrak{c}}}}$ by $\mathbf{R}^{[A]^{\mathfrak{c}}}$. 

\begin{prop}\label{prop:pushforward}
Let $\mathbf{v}, \mathbf{v}_1, \mathbf{v}_2 \in \Lambda_{\mathfrak{c},d}$, and $A \in \Xi_d(\mathbf{v}_1,\mathbf{v}_2)$.
\begin{enumerate}
    \item There exist isomorphisms $H_*^{G}(\mathscr{F}_{\mathbf{v}}) \cong \mathbf{R}^{[\mathbf{v}]^{\mathfrak{c}}} \text{and} \ H_*^{G}(\mathcal{O}_A) \cong \mathbf{R}^{[A]^{\mathfrak{c}}}$.
    \item Both the projection maps $p_{i,A} : \mathcal{O}_A \rightarrow \mathscr{F}_{\mathbf{v}_i}$ are smooth fibrations. Moreover, if $\mathcal{O}_A $ is closed, then the direct image morphism $p_{{1,A} *}$ is given by
    $$p_{{1,A} *}(\gamma) =  W_{[\mathbf{v}_1]^{\mathfrak{c}}}/W_{[A]^{\mathfrak{c}}}\bigg(\frac{\gamma}{\Eu(T_{p_{1,A}})}\bigg),$$ 
    where $\gamma\in H_*^{G}(\mathcal{O}_A)$ and $T_{p_{1,A}}$ is the relative tangent bundle.
\end{enumerate}
\end{prop}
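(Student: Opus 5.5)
For part (1) I will identify each space as a homogeneous space $G/H$ and use the standard reduction of equivariant homology of $G/H$ to $H$-equivariant homology of a point. For $\mathscr{F}_{\mathbf{v}}$, by \eqref{equ:FvGP} we have $\mathscr{F}_{\mathbf{v}}\simeq G/P_{\mathbf{v}}$. Hence
\[
H_*^G(\mathscr{F}_{\mathbf{v}})\simeq H^*_{P_{\mathbf{v}}}(\pt)\simeq H^*_{L_{\mathbf{v}}}(\pt),
\]
where $L_{\mathbf{v}}$ is the Levi factor of $P_{\mathbf{v}}$ (the two agree because the unipotent radical is contractible). By Chevalley restriction this is $H^*_T(\pt)^{W_{L_{\mathbf{v}}}}=\mathbf{R}^{W_{[\mathbf{v}]^{\mathfrak{c}}}}$. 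Here $W_{[\mathbf{v}]^{\mathfrak{c}}}$ is the Weyl group of $L_{\mathbf{v}}\simeq GL_{v_1}\times\cdots\times GL_{v_n}\times \Sp_{v_{n+1}}$, as recorded after \eqref{equ:FvGP}.

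For $\mathcal{O}_A$ the argument is the same. I pick the base point $(F_{\mathbf{v}_1}, wF_{\mathbf{v}_2})$, where $w$ is the minimal double coset representative attached to $A$ via the bijection of \cite[Section 6]{BKLW18}. Its stabilizer is $P_{\mathbf{v}_1}\cap wP_{\mathbf{v}_2}w^{-1}$. This group contains $T$, and its reductive part has Weyl group $W_{[\mathbf{v}_1]^{\mathfrak{c}}}\cap wW_{[\mathbf{v}_2]^{\mathfrak{c}}}w^{-1}$. To match this with $W_{[A]^{\mathfrak{c}}}$, I relabel coordinates so that the basis vectors in $(V_i\cap V'_j)/(\cdots)$ are the ones indexed by $[A]_{ij}$. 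This is a direct check from the definition \eqref{equ:ObirtA}: the blocks of common refinement of the two flags are indexed by the entries $a_{ij}$, and the symmetry $a_{ij}=a_{N+1-i,N+1-j}$ pairs the blocks. All blocks except the middle one give $GL_{a_{ij}}$ factors, while the middle block $(n+1,n+1)$ is self-dual and gives an $\Sp_{a_{n+1,n+1}}$ factor. Hence $H^G_*(\mathcal{O}_A)\simeq\mathbf{R}^{[A]^{\mathfrak{c}}}$.

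For part (2), smoothness of $p_{i,A}$ is automatic: the map is $G$-equivariant between homogeneous spaces, so it is a fibration $G/H\to G/P$ with smooth fiber $P/H$. When $\mathcal{O}_A$ is closed, the fiber of $p_{1,A}$ is projective, namely $P_{\mathbf{v}_1}/H$. For the pushforward I will pass to $T$-equivariant homology using $H^G_*\simeq (H^T_*)^W$ and apply fiberwise Atiyah–Bott localization \eqref{equ:local}. The $T$-fixed points in a fiber of $p_{1,A}$ over the base flag are indexed by $W_{[\mathbf{v}_1]^{\mathfrak{c}}}/W_{[A]^{\mathfrak{c}}}$. At the fixed point $\sigma$, the restriction of $\gamma$ is $\sigma(\gamma)$, and the tangent weight product is $\sigma(\Eu(T_{p_{1,A}}))$. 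Summing over these fixed points gives the stated formula. Both sides are then $W_{[\mathbf{v}_1]^{\mathfrak{c}}}$-invariant, which justifies the identification.

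The main obstacle is the bookkeeping in part (1): showing that the stabilizer's Weyl group is exactly $W_{[A]^{\mathfrak{c}}}$, with the correct embedding into $W_{\mathfrak{c}}$ (in particular the type C factor on the middle block), and that the fixed points in the fiber are exactly the coset representatives. I would first work this out for $n=1$ and then note that the general case is blockwise identical.
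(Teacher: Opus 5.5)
Your proposal is correct and follows essentially the same route as the paper: you realize $\mathscr{F}_{\mathbf{v}}$ and $\mathcal{O}_A$ as homogeneous spaces, reduce to the equivariant cohomology of the reductive part of the stabilizer, and compute $p_{1,A*}$ by localization on the (projective) fiber over the base flag, whose fixed points are indexed by $W_{[\mathbf{v}_1]^{\mathfrak{c}}}/W_{[A]^{\mathfrak{c}}}$. The only difference is cosmetic. The paper skips double coset representatives and takes as base point the pair $(F_\circ,F'_\circ)$ built directly from the decomposition $V=\bigoplus_{i,j}V_{ij}$ with $V_{ij}=\Span\{\epsilon_k\mid k\in[A]_{ij}\}$, which is exactly what your relabeling step produces.
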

\begin{proof}
By the isomorphism \eqref{equ:FvGP} and the reduction in equivariant cohomology,
\[
H_*^{G}(\mathscr{F}_{\mathbf{v}}) \simeq H_*^G(G/P_\mathbf{v})\simeq  H^*_{P_\mathbf{v}}(\pt)\simeq \mathbf{R}^{[\mathbf{v}]^{\mathfrak{c}}},
\]
and the isomorphism is given by 
\[
H_*^{G}(\mathscr{F}_{\mathbf{v}})\ni \gamma \mapsto \gamma|_{F_\mathbf{v}}\in \mathbf{R}^{[\mathbf{v}]^\mathfrak{c}},
\]
where $\gamma|_{F_\mathbf{v}}$ denotes the restriction of $\gamma$ to the torus fixed point $F_\mathbf{v}\in G/{P_\mathbf{v}}$.

Given $A \in \Xi_d(\mathbf{v}_1,\mathbf{v}_2)$, define a decomposition $V=\bigoplus_{1\leq i,j\leq N}V_{ij}$ by
\[V_{ij}=\Span\{\epsilon_k\mid k\in [A]_{ij}\}.\]
Let $F_\circ =(V_k)_{1\leq k\leq N}$ be the flag with $V_k=\bigoplus\limits_{1\leq i\leq k,1\leq j\leq N}V_{ij}$, and let $F'_\circ =(V'_k)_{1\leq k\leq N}$ be the flag with $V'_k=\bigoplus\limits_{1\leq j\leq k,1\leq i\leq N}V_{ij}$. 
Then $F_\circ \in \mathscr{F}_{\ro(A)}$, $F'_\circ \in\mathscr{F}_{\co(A)}$ and $(F_\circ,F'_\circ)\in \mathscr{O}_A$. Let $P_{F_\circ}$ (respectively, $P_{F'_\circ}$) be the stabilizer of $F_\circ$ (respectively, $F'_\circ$). We have $\mathscr{O}_A\simeq G/(P_{F_\circ}\cap P_{F'_\circ})$. The reductive part of $P_{F_\circ}\cap P_{F'_\circ}$ is isomorphic to 
\[
GL(V_{11})\times \dotsb\times GL(V_{N1})\times GL(V_{21})\times \dotsb \times GL(V_{n+1,n})\times \Sp(V_{n+1,n+1}),
\]
with Weyl group $W_{[A]^\mathfrak{c}}$. Thus we have an isomorphism
\[ H_*^{G}(\mathcal{O}_A) \cong \mathbf{R}^{[A]^{\mathfrak{c}}},\]
given by
$\gamma\mapsto \gamma|_{(F_\circ,F'_\circ)}$.
This finishes the proof of Part (1). By definition, $P_{F_\circ}=P_{\mathbf{v}_1}$. Hence, the projection $p_{1,A}:\mathcal{O}_A\simeq G/(P_{F_\circ}\cap P_{F'_\circ})\rightarrow \mathscr{F}_{\mathbf{v}_1}\simeq G/P_{F_\circ}$ is a smooth fibration. The same holds for the other projection. By the construction, the torus fixed points in $(\mathcal{O}_A)^T\cap p_{1,A}^{-1}(F_{\mathbf{v}_1})$ are in bijection with the coset $W_{[\mathbf{v}_1]^{\mathfrak{c}}}/W_{[A]^{\mathfrak{c}}}$, where $T\subset G$ is the maximal diagonal torus. Then the formula for the projection $p_{1,A*}$ follows from this fact and the localization formula \eqref{equ:local}. 
\end{proof}

\subsection{The Steinberg variety}
\label{steinberg variety of type c}

Let $\mathscr{M} = T^*\mathscr{F} $ be the cotangent bundle of the $N$-step partial flag variety $\mathscr{F}$ in \eqref{eq:F}.
More explicitly, $\mathscr{M}$ can be written as
\begin{equation*}
  \label{tangent}
  \mathscr{M} = T^*\mathscr{F} = \{(F,x) \in \mathscr{F} \times \mathfrak{sp}_{2d}\mid x(F_i) \subseteq F_{i-1}, ~ \forall i \}
  \subseteq \mathscr{F} \times \mathfrak{sp}_{2d},
\end{equation*}
where $\mathfrak{sp}_{2d}$ is the Lie algebra of $G$. There is a natural $G$-action on $\mathscr{M}$ induced by the $G$-action on $\mathscr{F}$. Define a $G\times \C^*$-action on $\mathscr{M}$ by
$$ (g,z) \cdot  (F,x) = (gF, z^{-2}gxg^{-1}), \quad \forall (g,z) \in G\times \C^*.$$ As before, let $q$ denote the degree one representation of $\mathbb{C}^*$ and $\hbar:=c_1(q^{-2})$. 

Let $\mathscr{N}$ be the nilpotent variety of the Lie algebra $\mathfrak{sp}_{2d}$. By definition, we have $\mathscr{M} \subseteq \mathscr{F} \times \mathscr{N}$. The projection map $\pi: \mathscr{M}\rightarrow \mathscr{N}, (F,x) \mapsto x$, is proper and $G\times \C^*$-equivariant.
Let 
\begin{align}
    \label{eq:Z}
    Z:=\mathscr{M}\times_{\mathscr{N}} \mathscr{M}\subset \mathscr{M}\times\mathscr{M}
\end{align}
be the (generalized) Steinberg variety of type C. The group $G\times \C^* $ acts on $Z$ diagonally, and $(H_*^{G\times \C^*}(Z), \star)$ is a convolution $H_*^{G\times \C^*}(\pt)$-algebra with unit, see Section \ref{subsec:convolution}.

Via convolution, the algebra $H_*^{G\times \C^*}(Z)$ acts on 
\begin{align}  \label{KMR}
    H_*^{G\times \C^*}(\mathscr{M})\simeq \bigoplus_{\mathbf{v}\in \Lambda_{\mathfrak{c},d}} \mathbf{R}^{[\mathbf{v}]^{\mathfrak{c}}}[\hbar].
\end{align}
Following \cite[Claim 7.6.7]{CG97}, we have the following lemma.

\begin{lem}\label{lemma:faithful}
The convolution action of $H_*^{G\times \C^*}(Z)$ on $H_*^{G\times \C^*}(\mathscr{F})$ is faithful.
\end{lem}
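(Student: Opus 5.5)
The plan follows the argument of \cite[Claim 7.6.7]{CG97}: use the orbit stratification of $Z$ to reduce faithfulness to a triangularity statement. For each $A\in\Xi_d$ the conormal bundle $Z_A$ has closure $\overline{Z_A}$, and $Z=\bigsqcup_A Z_A$ is a finite union of $G\times\C^*$-stable locally closed pieces. Each piece is a vector bundle over $\mathcal{O}_A\simeq G/(P_{F_\circ}\cap P_{F'_\circ})$. By the Thom isomorphism and Proposition \ref{prop:pushforward}(1), $H_*^{G\times\C^*}(Z_A)\simeq \mathbf{R}^{[A]^{\mathfrak{c}}}[\hbar]$, which is concentrated in even degrees and free over $\C[\hbar]$.

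\textbf{Step 1: free filtration of $H_*^{G\times\C^*}(Z)$.} Refine the order $\preceq$ to a total order. Using the closure relation of Proposition \ref{prop:orbit}, define closed unions $Z_{\leq A}$. Because all pieces have even-degree homology, the long exact sequences split. Hence $H_*^{G\times\C^*}(Z)$ carries a filtration whose associated graded is $\bigoplus_A H_*^{G\times\C^*}(Z_A)$, and $H_*^{G\times\C^*}(Z)$ is a free $H_*^{G\times\C^*}(\pt)$-module.

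\textbf{Step 2: injectivity after localization.} Both $H_*^{G\times\C^*}(Z)$ and $H_*^{G\times\C^*}(\mathscr{F})$ are torsion free. It therefore suffices to prove faithfulness after localizing to the fraction field, or after restricting to $T\times\C^*$ and localizing. By the localization theorem, $H_*^{T\times\C^*}(Z)_{loc}\simeq H_*^{T\times\C^*}(Z^{T\times\C^*})_{loc}$. The fixed locus is finite: it consists of pairs of $T$-fixed flags with zero cotangent vector, namely $\mathscr{F}^T\times\mathscr{F}^T$. Likewise $H_*(\mathscr{F})_{loc}$ has the fixed-point basis $[x]$ for $x\in\mathscr{F}^T$.

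Under these identifications, convolution with the class of a fixed point $(x,y)$ sends $[y']$ to $\delta_{y,y'}[x]$, up to nonzero Euler factors. So the localized convolution algebra is the full matrix algebra $\operatorname{End}\bigl(H_*(\mathscr{F})_{loc}\bigr)$, which acts faithfully. This is the argument of \cite[Section 5.11 and Claim 7.6.7]{CG97}. Since $H_*^{G}=\bigl(H_*^{T}\bigr)^{W}$ embeds in $H_*^T$ compatibly with convolution, faithfulness descends to $G\times\C^*$.

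\textbf{Main obstacle.} The delicate point is justifying that the localization map $H_*^{T\times\C^*}(Z)\to H_*^{T\times\C^*}(Z)_{loc}$ is injective, that is, that $H_*^{T\times\C^*}(Z)$ is torsion free. This requires Step 1's evenness and freeness; $Z$ is singular, so it does not come for free.

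I would handle this by running Step 1 $T\times\C^*$-equivariantly, since each $H_*^{T\times\C^*}(Z_A)$ is a free polynomial module. I would also check that convolution on $Z$ is compatible with localization. This holds because convolution is defined via proper pushforward and refined pullback on the smooth ambient $\mathscr{M}^3$, both of which commute with localization.
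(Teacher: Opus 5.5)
Your argument is correct, but it follows a different route from the paper. The paper does not carry out a fixed-point computation itself. It quotes \cite[Claim 7.6.7]{CG97} for faithfulness of the action on $H_*^{G\times\C^*}(\mathscr{M})$. It then transfers this to the zero section by $\C^*$-localization: after inverting $\hbar$, every class on $\mathscr{M}$ is written as $\iota_*$ of a class on $\mathscr{F}$ via the self-intersection formula. This uses that convolution is compatible with $\iota_*$ and that $H_*^{G\times\C^*}(\mathscr{M})$ is $\C[\hbar]$-free.

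You instead prove faithfulness on $H_*^{G\times\C^*}(\mathscr{F})$ directly, from two ingredients:
\begin{enumerate}
\item torsion-freeness of $H_*^{T\times\C^*}(Z)$, coming from the orbit filtration (the paper sets up the same filtration in the discussion before Lemma~\ref{lem:Ea1});
\item bivariant localization to $Z^{T\times\C^*}=\mathscr{F}^T\times\mathscr{F}^T$, where the localized algebra is a full matrix algebra acting on $H_*^{T\times\C^*}(\mathscr{F})_{loc}$ by its standard representation.
\end{enumerate}
Because $\mathscr{M}^{T\times\C^*}=\mathscr{F}^T$ already, you never need the intermediate module $H_*^{G\times\C^*}(\mathscr{M})$ or the $\iota_*$ transfer. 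Your route is self-contained, since it is essentially the mechanism behind the cited claim. The paper's route is shorter, at the cost of outsourcing the core argument.

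Two places should be tightened.

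\textbf{Compatibility with convolution.} This is not plain commutation of pushforward and refined pullback with restriction to fixed points. It is the homology version of \cite[Theorem 5.11.10]{CG97}, which the paper also uses in Section~\ref{sec:rep}. There the restriction map is twisted on the middle factor by the inverse Euler class of the normal bundle of $\mathscr{M}^{T\times\C^*}$ in $\mathscr{M}$; this class is invertible after localization. You should cite that theorem rather than the informal justification you give.

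\textbf{Descent from $T$ to $G$.} Injectivity of $H_*^{G\times\C^*}(Z)\to H_*^{T\times\C^*}(Z)$ is not enough on its own. You also need that a class killing $H_*^{G\times\C^*}(\mathscr{F})$ kills $H_*^{T\times\C^*}(\mathscr{F})$. This holds because $H^T_*(X)\cong H^*_T\otimes_{H^*_G}H^G_*(X)$ for connected $G$, and the action is $H^*_{T\times\C^*}$-linear.
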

\begin{rem}
    More precisely, the \textit{loc. cit.} showed that the action of $H_*^{G\times \C^*}(Z)$ on $H_*^{G\times \C^*}(\mathscr{M})$ is faithful. By the Thom isomorphism, $H_*^{G\times \C^*}(\mathscr{M})\simeq H_*^{G\times \C^*}(\mathscr{F})\simeq H_*^{G}(\mathscr{F})[\hbar]$, and it is a free $\mathbb{C}[\hbar]$ module. Let $\iota:\mathscr{F}\hookrightarrow\mathscr{M}$ denote the natural inclusion. Then after localization with respect to the $\mathbb{C}^*$ action, any class $\gamma\in H_*^{G\times \C^*}(\mathscr{M})$ can be written as 
    \[\gamma=\iota_*\bigg(\frac{\iota^*\gamma}{\Eu(T^*\mathscr{M})}\bigg).\]
    Therefore, $H_*^{G\times \C^*}(\mathscr{F})$ must also be a faithful module.
\end{rem}

\subsection{A generating set for the convolution algebra}
In this section, we establish a generating set for the convolution algebra $H^{G\times\mathbb{C}^*}_*(Z)$.
Let $Z_{\mathbf{v},\mathbf{w}}:=T^*\mathscr{F}_{\mathbf{v}}\times_{\mathscr{N}}T^*\mathscr{F}_{\mathbf{w}} $. The diagonal $G$-orbits on $\mathscr{F}_{\mathbf{v}}\times \mathscr{F}_{\mathbf{w}}$ are indexed by $\Xi_d(\mathbf{v},\mathbf{w})$, and 
\[Z_{\mathbf{v},\mathbf{w}}=\bigsqcup_{A\in \Xi_d(\mathbf{v},\mathbf{w})}Z_A,\]
where $Z_A:=T^*_{\mathcal{O}_A}(\mathscr{F}_{\mathbf{v}}\times \mathscr{F}_{\mathbf{w}})$ is the conormal bundle of the $G$-orbit corresponding to $A\in \Xi_d(\mathbf{v},\mathbf{w})$. Then all the irreducible components of $Z$ are of the form $\overline{Z_A}$. For any $A\in\Xi_d$, let $Z_{\preceq A}:=\bigcup_{B\preceq A}Z_B$, which is a closed subvariety of $Z$. By the cellular fibration lemma \cite[Lemma 5.5.1]{CG97}, the open immersion $Z_{\prec A}\hookrightarrow Z_{\preceq A}$ gives rise to the following short exact sequence \begin{equation}\label{cfb}
    0\rightarrow H^{G\times\mathbb{C}^*}_*(Z_{\prec A})\rightarrow H^{G\times\mathbb{C}^*}_*(Z_{\preceq A})\rightarrow H^{G\times\mathbb{C}^*}_*(Z_A)\rightarrow0,
\end{equation} where $Z_{\prec A}:=Z_{\preceq A}\backslash Z_A$. Moreover, all the odd homologies vanish, see \cite{DR08}. 
Hence, the induced maps $H^{G\times\mathbb{C}^*}_*(Z_{\preceq A})\rightarrow H^{G\times\mathbb{C}^*}_*(Z)$ are injective and their images form a filtration of $H^{G\times\mathbb{C}^*}_*(Z)$ indexed by $\Xi_d$. Moreover, Proposition \ref{p-fiber} implies $Z_{\preceq A}\circ Z_{\preceq A}\subset Z_{\preceq A\circ B}$. Thus, $H^{G\times\mathbb{C}^*}_*(Z_{\preceq A})\star H^{G\times\mathbb{C}^*}_*(Z_{\preceq B})\subset H^{G\times\mathbb{C}^*}_*(Z_{\preceq A\circ B})$. The following maps \begin{align*}
    H^{G\times\mathbb{C}^*}_*(Z_{\preceq A})\twoheadrightarrow H^{G\times\mathbb{C^*}}_*(Z_{\preceq A})/H^{G\times\mathbb{C}^*}_*(Z_{\prec A})&\stackrel{\sim}{\rightarrow}H^{G\times\mathbb{C}^*}_*(Z_{A})\\&\stackrel{\sim}{\rightarrow}H^{G\times\mathbb{C}^*}_*(\mathcal{O}_{A})\stackrel{\sim}{\rightarrow}\mathbf{R}^{[A]^\mathfrak{c}}[\hbar]
\end{align*}
identifies the associated graded of $H^{G\times\mathbb{C}^*}_*(Z)$ with $\bigoplus_{A\in\Xi_d}\mathbf{R}^{[A]^\mathfrak{c}}[\hbar]$. 

\begin{lem}\label{lem:Ea1}
    For $1\le i\le N-1$ and $a\in\mathbb{Z}_{\ge1}$, the image of $H^{G\times\mathbb{C}^*}_*(Z_{E^\theta_{i,i+1}(\mathbf{v},a)})$ in $(H^{G\times\mathbb{C}^*}_*(Z),\star)$ can be obtained via convolution from images of $H^{G\times\mathbb{C}^*}_*(Z_{E^\theta_{i,i+1}(\mathbf{v}',k)})$ for various $\mathbf{v}'$ and $0\le k\le a-1.$
\end{lem}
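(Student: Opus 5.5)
Write $A=E^\theta_{i,i+1}(\mathbf{v},a)$. The plan is to show that the full image of $H^{G\times\mathbb{C}^*}_*(Z_A)$ in $H^{G\times\mathbb{C}^*}_*(Z)$ lies in the span of convolutions of classes supported on $Z_{E^\theta_{i,i+1}(\mathbf{v}',k)}$ with $k<a$, where we are free to use arbitrary classes on these smaller orbits (arbitrary elements of $\mathbf{R}^{[\cdot]^\mathfrak{c}}[\hbar]$ via Proposition \ref{prop:pushforward}).

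\textbf{Step 1: the leading term of a convolution.} Let
\[
A_1=E^\theta_{i,i+1}(\mathbf{v}+(a-1)(\mathbf{e}_i+\mathbf{e}_{\tau i+1}),1),\qquad B=E^\theta_{i,i+1}(\mathbf{v}+\mathbf{e}_{i+1}+\mathbf{e}_{\tau i},a-1),
\]
so that $\co(A_1)=\ro(B)$. By Lemma \ref{lem:MAB} with $h=i$ and $a=1$, $\mathbf{M}(A_1,B)$ consists of $B+(E^\theta_{ij}-E^\theta_{i+1,j})$ with $b_{i+1,j}\ge 1$ (or with $b_{n+1,j}\ge 1+\delta_{j,\tau j+1}$ when $i=n$). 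The admissible columns are $j=i+1$ (coming from the $(i,i+1)$ and diagonal entries) and possibly $j=\tau i,\tau i+1$. Among the resulting matrices the maximal one for $\preceq$ is $A$, obtained by moving a unit from $(i+1,i+1)$ to $(i,i+1)$; the others are strictly smaller in the order \eqref{equ:order}. Hence $A_1\circ B=A$. Using the transversality/isomorphism argument of Corollary \ref{cor:AcircB}, the map $\mathcal{O}_T\to\mathcal{O}_A$ is a fibration whose fiber over $(F,F'')$ parametrizes $V_i'$ with $V_i''\subset V_i'\subset V_i$ of codimension $1$ in $V_i$. This fiber is $\mathbb{P}^{a-1}$, the projectivization of the dual of $V_i/V_i''$.

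\textbf{Step 2: image on the associated graded.} I will compute the image of $\gamma_1\star\gamma_2$ in $H_*(Z_A)\simeq\mathbf{R}^{[A]^\mathfrak{c}}[\hbar]$ for $\gamma_1\in\mathbf{R}^{[A_1]^\mathfrak{c}}$ and $\gamma_2\in\mathbf{R}^{[B]^\mathfrak{c}}$. By a localization computation as in Proposition \ref{prop:pushforward} (together with Lemma \ref{Lemme 2}-type Euler class factors), this image is a symmetrizer. Indexing by $x_m$, $m\in[A]_{i,i+1}$, the $a$ variables of the block of size $a$, we get
\[
\sum_{m\in[A]_{i,i+1}}\frac{f_1(x_m)\,f_2(x_{\hat m})\cdot(\text{Euler factor})}{\prod_{m'\neq m}(x_{m'}-x_m)},
\]
which is the pushforward from $\mathbb{P}^{a-1}$, twisted by the symmetric partner coordinates (up to sign from the $\tau$-symmetry). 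The task is then to show that these expressions, for varying $f_1\in\mathbb{C}[x_m]$ and $f_2$ symmetric in the remaining $a-1$ variables (and arbitrary in the other blocks), generate $\mathbf{R}^{[A]^\mathfrak{c}}[\hbar]$ as a module. This is the standard fact that $H^*(\mathbb{P}^{a-1})$-type pushforwards surject: taking $f_1=x^{k}$ for $k\ge a-1$ and $f_2$ elementary symmetric functions produces all complete/elementary symmetric polynomials in the $a$ variables, times the remaining factors.

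\textbf{Step 3: induction.} Any class on $Z_A$ lifts modulo $H_*(Z_{\prec A})$ to a sum of products $\gamma_1\star\gamma_2$. The error terms lie in $Z_{\prec A}$, which is a union of $Z_C$ with $C\prec A$ in $\Xi_d(\ro A,\co A)$. I expect the main obstacle is here: the strata $C\prec A$ include matrices with entries at $(i,\tau i)$ or similar positions when $i$ is near $n$, and these are not of the form $E^\theta_{i,i+1}$. Since the only $C$ strictly below $A$ that are reachable differ from $A$ by moving units back to the diagonal, I would instead argue that every $C\preceq A$ with the same row and column sums is of the form $E^\theta_{i,i+1}(\mathbf{v}'',k)$ with $k<a$ (possibly after using the $\tau$-symmetry when $i=n$). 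Descending induction on $\preceq$ then finishes the proof, with the base case $k=0$ being the diagonal classes. The case $i=n$ should be checked separately because of the constraint $s_j+s_{\tau j+1}\le b_{n+1,j}$.
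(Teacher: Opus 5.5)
Your factorization $A=A_1\circ B$ (a size-$1$ step followed by a size-$(a-1)$ step) and the $\mathbb{P}^{a-1}$-fibre of $\mathcal{O}_T\to\mathcal{O}_A$ are exactly the paper's setup. There it is written as $E^\theta(\cdot,1)\circ E^\theta(\cdot,a)=E^\theta(\cdot,a+1)$ with fibre $\Gr(1,V_1''/V_1)$; the fibre should be read off from \eqref{equ:OA}, not from Corollary \ref{cor:AcircB}, which is about an isomorphism.

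\textbf{The gap is in Step 2, the only substantive point of the lemma.} The ``Euler factor'' is the excess class $\Eu(q^{-2}T^*_{p_{13,T}})$, so the image of $f_1\otimes f_2$ on the top stratum is
\[
\sum_{m\in[A]_{i,i+1}}\ \prod_{m'\neq m}\frac{\hbar-x_{m'}+x_m}{x_{m'}-x_m}\; f_1(x_m)\,f_2(x_{\hat m}).
\]
This is homogeneous of the same degree as $f_1f_2$, and at $\hbar=0$ it reduces to $(-1)^{a-1}\sum_m f_1(x_m)f_2(x_{\hat m})$. Your recipe ($f_1=x^k$ with $k\ge a-1$, $f_2$ elementary symmetric) therefore only produces elements of degree $\ge a-1$. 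For $a\ge 2$ these can never generate $1$, even after multiplying by invariants. The fact that the pushforward of $x_m^{a-1}$ from $\mathbb{P}^{a-1}$ is $\pm1$ holds for the untwisted pushforward, which is not the map occurring here.

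The missing idea is the paper's computation with $f_1=f_2=1$. The symmetrized ratio is then a polynomial of degree $0$, so it equals its value at $\hbar=0$, namely $(-1)^{a-1}a$, the signed Euler characteristic of $\mathbb{P}^{a-1}$. (In the paper's indexing this is $(-1)^a(a+1)$, obtained by a residue at $\infty$.) Since this is a unit, linearity of the symmetrizer shows that $\sum_k f_k\star g_k$ maps to $(-1)^{a-1}a\sum_k f_kg_k$ whenever $\sum_k f_kg_k\in\mathbf{R}^{[A]^\mathfrak{c}}[\hbar]$. Such sums exhaust $\mathbf{R}^{[A]^\mathfrak{c}}[\hbar]$, which gives surjectivity.

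\textbf{Step 3 addresses a non-issue with a false claim.} Since $\mathcal{O}_A$ is closed by \eqref{equ:OA}, $A$ is minimal for $\preceq$, so $Z_{\prec A}=\emptyset$ and there are no error terms. Consistently, Lemma \ref{lem:MAB} gives $\mathbf{M}(A_1,B)=\{A\}$, because row $i+1$ of $B$ has only its diagonal entry. Your proposed replacement is false: you claim every $C\preceq A$ with the same margins is some $E^\theta_{i,i+1}(\mathbf{v}'',k)$ with $k<a$. But such a matrix has $\ro-\co=k(\mathbf{e}_i+\mathbf{e}_{\tau i+1}-\mathbf{e}_{i+1}-\mathbf{e}_{\tau i})$, whereas $A$ has the same expression with $a$ in place of $k$, and this vector is nonzero. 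Drop the descending induction and use the closedness of $\mathcal{O}_A$ instead.
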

\begin{proof}
    The case of $i\neq n+1$ can be proved exactly the same as in \cite[Example on p.280]{V98}. Let us assume $i=n+1$. It suffices to show the case for $n=1$. Pick non-negative integers $a,v$, such that $a+v+1\le d$. Put 
 $$A=\begin{pmatrix}
     d-v-a-1&0&0\\
     1&2v+2a&1\\
     0&0&d-v-a-1
 \end{pmatrix},\quad B=\begin{pmatrix}
     d-v-a&0&0\\
     a&2v&a\\
     0&0&d-v-a
 \end{pmatrix},$$
 $$C=\begin{pmatrix}
     d-v-a-1&0&0\\
     a+1&2v&a+1\\
     0&0&d-v-a-1
 \end{pmatrix}.$$
From \eqref{equ:OA}, we get $A\circ B=C$. By definition, 
$$[A]^\mathfrak{c}=([1,d-v-a-1],[d-v-a,d-v-a],[d-v-a+1,d]),$$
$$[B]^\mathfrak{c}=([1,d-v-a],[d-v-a+1,d-v],[d-v+1,d]),$$
and
$$[C]^\mathfrak{c}=([1,d-v-a-1],[d-v-a,d-v],[d-v+1,d]).$$ 
Set\begin{align*}
    &I_1=[1,d-v-a-1], I_2=[d-v-a+1,d], I_3=[1,d-v-a],\\
    &I_4=[d-v-a+1,d-v], I_5=[d-v-a,d-v], I_6=[d-v+1,d].
\end{align*}
Thus,
\begin{align*}
    &W_{[A]^\mathfrak{c}}=S_{I_1}\times(\mathbb{Z}_2^{|I_2|}\rtimes S_{I_2}),W_{[B]^\mathfrak{c}}=S_{I_3}\times S_{I_4}\times(\mathbb{Z}_2^{|I_6|}\rtimes S_{I_6}), \\&W_{[C]^\mathfrak{c}}=S_{I_1}\times S_{I_5}\times(\mathbb{Z}_2^{|I_6|}\rtimes S_{I_6}).
\end{align*}
Therefore, $$W_{[C]^\mathfrak{c}}/(W_{[A]^\mathfrak{c}}\cap W_{[B]^\mathfrak{c}})=S_{I_5}/S_{I_4}.$$

Let $\mathcal{O}_T:=p^{-1}_{12}(\mathcal{O}_{A})\cap p^{-1}_{23}(\mathcal{O}_B)\cap p^{-1}_{13}(\mathcal{O}_{A\circ B})$, and let $p_{13,T}$ be the restriction of $p_{13}$ to $\mathcal{O}_T$.
By \eqref{equ:OA}, $p_{13,T}$ is a smooth and proper fibration whose fiber over $(F,F'')\in\mathcal{O}_{A\circ B}$ is 
$$\{V'_1\mid  V_1\stackrel{1}{\subset} V'_1\stackrel{a}{\subset} V_1''\}\simeq \mathrm{Gr}(1,V_1''/V_1),$$
where $F=(0=V_0\subset V_1\subset V_2\subset V_3=V)$ and $F''=(0=V''_0\subset V''_1\subset V''_2\subset V_3''=V)$. 
Let $f\in H^{G\times\mathbb{C}^*}_*(Z_A)\simeq \mathbf{R}^{[A]^\mathfrak{c}}[\hbar]$ and $g\in H^{G\times\mathbb{C}^*}_*(Z_A)\simeq \mathbf{R}^{[B]^\mathfrak{c}}[\hbar]$. By homological version of \cite[Corollary 3]{V98} and localization formula \eqref{equ:local}, we have 
$$f\star g=S_{I_5}/S_{I_4}\bigg(\frac{\mathrm{Eu}(q^{-2}T^*_{p_{13,T}})}{\mathrm{Eu}(T_{p_{13,T}})}\cdot fg\bigg),$$
where $T_{p_{13,T}}$ is the relative tangent bundle of $p_{13,T}$. Since $\mathbf{R}^{[A]^\mathfrak{c}}[\hbar]$ and $\mathbf{R}^{[B]^\mathfrak{c}}[\hbar]$ generate $\mathbf{R}^{[C]^\mathfrak{c}}[\hbar]$, it suffices to show the map 
$$\psi:\mathbf{R}^{[C]^\mathfrak{c}}[\hbar]\to\mathbf{R}^{[C]^\mathfrak{c}}[\hbar], \quad f\mapsto S_{I_5}/S_{I_4}\bigg(\frac{\mathrm{Eu}(q^{-2}T^*_{p_{13,T}})}{\mathrm{Eu}(T_{p_{13,T}})}\bigg)\cdot f$$
is surjective. Set $t=d-v-a$, then 
 $$\frac{\mathrm{Eu}(q^{-2}T^*_{p_{13,T}})}{\mathrm{Eu}(T_{p_{13,T}})}=\prod_{t+1\leq j\leq d-v}\frac{\hbar-x_j+x_t}{x_j-x_t}.$$
By the residue formula, 
\begin{align*}
    S_{I_5}/S_{I_4}\bigg(\frac{\mathrm{Eu}(q^{-2}T^*_{p_{13,T}})}{\mathrm{Eu}(T_{p_{13,T}})}\bigg)=&\sum_{t\leq k\leq d-v}(t,k)\cdot \bigg(\prod_{t+1\leq j\leq d-v}\frac{\hbar-x_j+x_t}{x_j-x_t}\bigg)\\
    =&\sum_{t\leq k\leq d-v}\prod_{\substack{t\leq j\leq d-v\\j\neq k}}\frac{\hbar-x_j+x_k}{x_j-x_k}\\
    =&\hbar^{-1}\mathrm{Res}(\prod_{t\leq j\leq d-v}\frac{\hbar-x_j+z}{x_j-z},\infty)\\
    =&(-1)^{a}(a+1),
\end{align*}
which is invertible. Hence, $\psi$ is surjective.
\end{proof}

\begin{thm}\label{thm:generators}
    The convolution algebra $H_*^{G\times\C^*}(Z)$ is generated by $H_*^{G\times\C^*}(Z_{\diag(\mathbf{v})})$ for $\mathbf{v}\in \Lambda_{\mathfrak{c},d}$, and $H_*^{G\times\C^*}(Z_{E^\theta_{i,i+1}(\mathbf{v}',1)})$ for $\mathbf{v}'\in \Lambda_{\mathfrak{c},d-1}$ and $1\leq i\leq N-1$.
\end{thm}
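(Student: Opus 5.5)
We argue by induction on the partial order $\preceq$ of \eqref{equ:order}, working with the filtration $H^{G\times\C^*}_*(Z_{\preceq C})$ and its associated graded $\bigoplus_{C}\mathbf{R}^{[C]^\mathfrak{c}}[\hbar]$. Let $\mathcal{A}\subset H_*^{G\times\C^*}(Z)$ be the subalgebra generated by the classes listed in the statement. By Lemma \ref{lem:Ea1} and induction on $a$, $\mathcal{A}$ contains the images of $H^{G\times\C^*}_*(Z_{E^\theta_{i,i+1}(\mathbf{v},a)})$ for all $a\ge 1$. For each $C\in\Xi_d$ we show that $H^{G\times\C^*}_*(Z_{\preceq C})\subset\mathcal{A}$. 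Because of the exact sequence \eqref{cfb}, it suffices to show that every element of $H^{G\times\C^*}_*(Z_C)\simeq\mathbf{R}^{[C]^\mathfrak{c}}[\hbar]$ lifts to an element of $\mathcal{A}\cap H^{G\times\C^*}_*(Z_{\preceq C})$. The remaining error then lies in $H^{G\times\C^*}_*(Z_{\prec C})$, which is a sum of pieces $H_*(Z_{\preceq C'})$ with $C'\prec C$ and is handled by induction.

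For the induction we use a secondary measure on non-diagonal $C$, for instance the total off-diagonal mass $\sum_{i\neq j}c_{ij}$, combined with the position $(h,l)$. If $C$ is diagonal, $Z_C$ is closed and the claim holds by hypothesis. Otherwise, take $(h,l)$ maximal for the right-lexicographic order with $h<l$ and $c_{hl}\ne0$, and form $A$ and $B$ as in Corollary \ref{cor:AcircB}. Then $A\circ B=C$ and $B$ has strictly smaller off-diagonal mass above the diagonal, since the entry $c_{hl}$ moves down one row. Iterating this reduces every $C$ to a product of $E^\theta$-type matrices with upper triangular pieces. Lower triangular entries are treated symmetrically, either by choosing the minimal entry below the diagonal or by using the involution swapping the two factors of $Z$. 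So we may assume inductively that $H_*(Z_{\preceq B})\subset\mathcal{A}$, and $H_*(Z_A)$ is contained in $\mathcal{A}$ because $\mathcal{O}_A$ is closed.

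The key step is the leading term of the convolution. For $f\in\mathbf{R}^{[A]^\mathfrak{c}}[\hbar]$ and $g\in\mathbf{R}^{[B]^\mathfrak{c}}[\hbar]$, the class $f\star g$ lies in $H_*(Z_{\preceq C})$ by Proposition \ref{p-fiber}. Its image in $H_*(Z_C)$ is computed through $\mathcal{O}_T$. Since $p_{13}|_{\mathcal{O}_T}$ is an isomorphism by Corollary \ref{cor:AcircB}(2), the homological version of \cite[Corollary 3]{V98} should give the leading term as $\pm$ (restriction of $f$)$\cdot$(restriction of $g$), possibly multiplied by an Euler class factor from the normal directions. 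Both restrictions land in $\mathbf{R}^{[C]^\mathfrak{c}}[\hbar]$, and there is no sum over cosets because the fiber is a point.

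It then remains to check that products $fg$ with $f\in\mathbf{R}^{[A]^\mathfrak{c}}$ and $g\in\mathbf{R}^{[B]^\mathfrak{c}}$ span $\mathbf{R}^{[C]^\mathfrak{c}}$ as a module. For this we compare the partitions $[A]^\mathfrak{c}$, $[B]^\mathfrak{c}$ and $[C]^\mathfrak{c}$ and show that $W_{[C]^\mathfrak{c}}=W_{[A]^\mathfrak{c}}\cap W_{[B]^\mathfrak{c}}$ after a suitable relabeling, i.e.\ a Weyl conjugation given by the identification of $\mathcal{O}_T$. The invariants of an intersection of parabolic subgroups are generated by the two invariant rings when the blocks are intervals of this form, because symmetric functions in a block are generated by the coarser symmetric functions together with those of a sub-block.

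I expect the main obstacle to be this leading-term computation: showing that the normal Euler factor is invertible (ideally equal to $1$), and pinning down the index bookkeeping in the type C middle block when $h=n$ or $l=n+1$. There the $\Sp(V_{n+1,n+1})$ factor and the $\mathbb{Z}_2$-symmetries interact, as in the exceptional case of Lemma \ref{lem:Ea1}. I would treat that case separately by the same residue-type computation, reducing to $n=1$.
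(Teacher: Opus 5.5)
Your strategy matches the paper's. You filter by $\preceq$, peel off the right-lexicographically maximal entry $c_{hl}$ via Corollary~\ref{cor:AcircB}, take the leading term of $f\star g$ in $H_*^{G\times\C^*}(Z_C)$, and reduce $A=E^\theta_{h,h+1}(\cdot,c_{hl})$ to multiplicity one by Lemma~\ref{lem:Ea1}. However, the induction as you set it up is not well-founded.

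First, $B$ does not have smaller off-diagonal mass when $l>h+1$: the entry $c_{hl}$ only moves from $(h,l)$ to $(h+1,l)$, which is still above the diagonal. For $n=1$, $d=2$, the matrix $C=\begin{pmatrix}0&0&1\\0&2&0\\1&0&0\end{pmatrix}$ gives $B=\begin{pmatrix}0&0&0\\1&2&1\\0&0&0\end{pmatrix}$, with the same mass. Adding the position $(h,l)$ repairs this step, but not the other one. To absorb the error in $H_*^{G\times\C^*}(Z_{\prec C})$ you need the hypothesis for every $C'\prec C$, and off-diagonal mass is not monotone for $\prec$. In the same example, take $C'=\begin{pmatrix}0&1&0\\1&0&1\\0&1&0\end{pmatrix}$, the orbit of pairs of lines $L\neq L'$ in $\mathbb{P}^3$ with $L'\subset L^\perp$, which lies in the closure of the open orbit $\mathcal{O}_C$. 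It satisfies $C'\prec C$ but has mass $4>2$. So $\preceq$ cannot compare $B$ with $C$ (they have different row sums), your secondary measure is incompatible with $\preceq$, and the two inductions do not combine into one.

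The missing ingredient is a single measure that decreases along both moves; the paper uses $\ell(C)=\sum_{i<j}\binom{j-i+1}{2}c_{ij}$. Counting pairs $r\le i<j\le s$ gives $\ell(C)=\sum_{i<j}\sum_{r\le i,\,s\ge j}c_{rs}$, a sum of exactly the quantities in \eqref{equ:order}. These determine the strictly upper triangular part, hence all of $C$ via $c_{ij}=c_{N+1-i,N+1-j}$ and the row sums. Therefore $C'\prec C$ forces $\ell(C')<\ell(C)$, while $\ell(B)=\ell(C)-(l-h)c_{hl}$. With induction on $\ell$, your argument goes through.

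Two further simplifications are available. Centrosymmetry means there are no separate lower triangular entries to treat, since clearing the upper part already makes $C$ diagonal. Also, no Euler factor or residue computation enters the leading term. Because $p_{13}|_{\mathcal{O}_T}$ is an isomorphism, the homology analogue of \cite[Proposition 2]{V98} gives exactly $fg$. For $(h,l)=(n,n+1)$, surjectivity is immediate, since each block of $[C]^{\mathfrak{c}}$ is a block of $[A]^{\mathfrak{c}}$ or $[B]^{\mathfrak{c}}$; this includes the central symplectic block, which equals that of $A$. The residue computation lives only in Lemma~\ref{lem:Ea1}, which you already invoke.
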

\begin{proof}
    For a matrix $C\in \Xi_d(\mathbf{v},\mathbf{w})$, let
    \[\ell(C):=\sum_{i<j}\begin{pmatrix}
        j-i+1\\
        2
    \end{pmatrix}c_{ij}.\]
    We will prove by induction on $\ell(C)$ that, modulo the lower graded piece $H_*^{G\times\C^*}(Z_{\prec C})$, $H_*^{G\times\C^*}(Z_{\preceq C})$ can be obtained via convolution from classes in $H_*^{G\times\C^*}(Z_A)$ such that $A\in \Xi_d$ is a diagonal matrix or a matrix of type $E^\theta_{i,i+1}(\mathbf{v}',1)$ for $1\leq i\leq N-1$. 

    If $\ell(C)=0$, then $C$ is a diagonal matrix. If $\ell(C)=1$, then $C=E^\theta_{i,i+1}(\mathbf{v}',1)$ for some $i$ and $\mathbf{v}'$. In both cases, there is nothing to show. Suppose $\ell(C)>1$. Put 
    \[(h,l):=\max\{(i,j)\mid 1\leq i< j\leq N, c_{ij}\neq 0\},\]
    with respect to the right-lexicographic order. Let 
    \[B=C+c_{hl}(E_{h+1,l}^\theta-E_{hl}^\theta),\quad \text{and}\quad A=E^\theta_{h,h+1}(\mathbf{v}-c_{hl}\mathbf{e}_h-c_{hl}\mathbf{e}_{N+1-h},c_{hl}).\]
    Thus, we are in the setting of Corollary \ref{cor:AcircB}. Hence, $A\circ B=C$. We show that the map \[\varphi:H_*^{G\times\C^*}(Z_{\preceq A})\otimes H_*^{G\times\C^*}(Z_{\preceq B})\rightarrow H_*^{G\times\C^*}(Z_{\preceq C})\twoheadrightarrow H_*^{G\times\C^*}(Z_{C})\] is surjective. The case of $(h,l)\neq (n,n+1)$ can be proved exactly the same as in \cite[Proposition 10]{V98}. Let us now assume $(h,l)= (n,n+1)$. By Corollary \ref{cor:AcircB} and the homology analog of \cite[Proposition 2]{V98}, for any $f\in \mathbf{R}^{[A]^\mathfrak{c}}$ and $g\in \mathbf{R}^{[B]^\mathfrak{c}}$, $\varphi(f\otimes g)= fg\in \mathbf{R}^{[C]^\mathfrak{c}}$. Since $b_{ij}=c_{ij}$ if $(i,j)\notin \{(n,n+1),(n+1,n+1),(n+2,n+1)\}$, $a_{n,n+1}=c_{n,n+1}$, $a_{n+1,n+1}=c_{n+1,n+1}$, and $a_{ij}=c_{ij}=0$ for $(i,j)\in \{(n,n+2),\cdots, (n,N),(n+1,1),\cdots, (n+1,n)\}$, the map $\varphi$ is obviously surjective. 
    Finally, notice that $\ell(B)<\ell(C)$. We use Lemma \ref{lem:Ea1} to conclude the proof.
\end{proof}

\section{The correspondences and operators}\label{sec:operators}

In this section, we introduce some special elements in the equivariant Borel--Moore homology of the Steinberg variety and write down the explicit formula for them under the faithful representation in Lemma \ref{lemma:faithful}.

\subsection{The combinatorial operators}\label{sec:HBoperators}
Let us introduce some notations first. Let $x_1,x_2,\cdots$, $x_{2d}$ be the standard dual coordinates on the Lie algebra of the maximal torus $T$ in $G$. For $1\leq r\leq 2d$, define $r':=2d+1-r$. Then $x_{r}=-x_{r'}$. 
Let $\varphi(x):=1+\frac{\hbar}{x}$. For any subset $I\subset[1,2d]$, define $\Phi_I(z):=\prod_{s\in I}\varphi(x_s-z)$. 
For any partition $I=(I_1,I_2,\cdots, I_N)$ of $\{1,2,\dots,2d\}$ satisfying $r\in I_j$ if and only if $r'\in I_{N+1-j}=I_{\tau j+1}$, let $(x_I)$ denote the variables 
\[(x_{i_{1,1}},\dots, x_{i_{1},j_1},x_{i_{2,1}},\dots, x_{i_{2},j_2},\dots, x_{i_{N},1},\dots,x_{i_N,j_N}),\] where $I_k=\{i_{k,1},i_{k,2},\dots,i_{k,j_k}\}$. For any $r\in I_s$, let $\tau_r^+I$ be the partition of $\{1,2,\dots,2d\}$ with $r$ shifted from $I_s$ to $I_{s+1}$ and $r'$ shifted from $I_{\tau s+1}$ to $I_{\tau s}$. For example, let us consider the case $d=3$ and $N=3$. Let $I=(\{1,2\},\{3,4\},\{5,6\})$ be a partition of $\{1,2,3,4,5,6\}$. Then \[f(x_{\tau_1^+I})=f(x_2,x_1,x_3,x_4,x_6,x_5),\]

Recall for any $\mathbf{v}\in\Lambda_{\mathfrak{c},d}$, we have the partition $[\mathbf{v}]:=([\mathbf{v}]_1,\cdots, [\mathbf{v}]_{N})$ of $\{1,2,\dots,2d\}$. We use the notation $f(x_{[\mathbf{v}]})$ to denote an element in $H_*^{G\times \C^*}(\mathscr{F}_{\mathbf{v}}) \simeq\mathbf{R}^{[\mathbf{v}]^{\mathfrak{c}}}[\hbar]$, and let 
\[\mathbf{P}:=\bigoplus_{\mathbf{v}\in \Lambda_{\mathfrak{c},d}}\mathbf{R}^{[\mathbf{v}]^\mathfrak{c}}[\hbar]\simeq H_*^{G\times \C^*}(\mathscr{F}).\]
We will define some explicit operators on $\mathbf{P}$.

For $1\leq i\leq 2n$ and $\mathbf{v}\in\Lambda_{\mathfrak{c},d}$, let
\begin{align}\label{equ:Hivu}
        H_{i,\mathbf{v}}(u):=&(1-\frac{\delta_{i,n}-\delta_{i,n+1}}{4u}\hbar)\cdot \Phi_{[\mathbf{v}]_{i}}(-u-\frac{n-i}{2}\hbar-\frac{\hbar}{4})\cdot \Phi_{[\mathbf{v}]_{\tau i}}(u+\frac{n-i}{2}\hbar+\frac{\hbar}{4}).
\end{align}
Then 
\begin{align*}
    H_{\tau i,\mathbf{v}}(-u)=
    H_{i,\mathbf{v}}(u).
\end{align*}
Consider $H_{i,\mathbf{v}}(u)$ as a series in $u^{-1}$, then it has constant $1$, and set $H^\circ_{i,\mathbf{v}}(u):=H_{i,\mathbf{v}}(u)-1$. Let $\mathbf{H}_i(u)$ be the operator on $\mathbf{P}$, which acts on $\mathbf{R}^{[\mathbf{v}]^\mathfrak{c}}$ by multiplying by $H_{i,\mathbf{v}}(u)$. For $r\geq 0$, let $H_{i,r}$ be the coefficient of $\hbar u^{-r-1}$ of $\mathbf{H}_i(u)$.

For $1\leq i\leq 2n$, define the operators $B_{i,r}$ on $\mathbf{P}$ by the following formula
\begin{equation}\label{equ:Bope}
    (B_{i,r}f)(x_{[\mathbf{v}]}):=\sqrt{-1}\sum_{j\in [\mathbf{v}]_{i}}(-x_j-\frac{n-i}{2}\hbar-\frac{\hbar}{4})^r\cdot \Phi_{[\mathbf{v}]_{i}\setminus\{j\}}(x_j)\cdot  f(x_{\tau^+_j[\mathbf{v}]}).
\end{equation}
We will construct some elements in $H_*^{G\times\C^*}(Z)$, whose convolution action on $H_*^{G\times\C^*}(\mathscr{F})\simeq\mathbf{P}$ are given by the above $H_{i,r}$ and $B_{i,r}$.

\subsection{The geometric $H$ operators}
On the flag variety $\mathscr{F}_\mathbf{v}$, we have the tautological vector bundle whose stalk at a point $F=(0=V_0\subset\cdots\subset V_N=V)\in\mathscr{F}_\mathbf{v}$ is given by $V_i$, and let $\mathcal{V}_{i,\mathbf{v}}$ denote its pullback vector bundle on $T^*\mathscr{F}_\mathbf{v}$. For $1\leq i\leq 2n$, 
let $E_{i,\mathbf{v}}$ be the virtual bundle \footnote{By taking a double cover of the torus $\C^*$, we can define $q^{\frac{1}{2}}$.}
\begin{align*}
    E_{i,\mathbf{v}}=&(q^{-\frac{5}{2}-n+i}-q^{-\frac{1}{2}-n+i})(\mathcal{V}_{i,\mathbf{v}}-\mathcal{V}_{i-1,\mathbf{v}})\\
    &+(q^{\frac{3}{2}-n+i}-q^{-\frac{1}{2}-n+i})(\mathcal{V}_{i+1,\mathbf{v}}-\mathcal{V}_{i,\mathbf{v}})+q^{\frac{\delta_{i,n}-\delta_{i,n+1}}{2}}\in K^{G\times \C^*}(T^*\mathscr{F}_\mathbf{v})
\end{align*}

Let $\mathscr{H}_{i,\mathbf{v},r}\in H^*_{G\times\mathbb{C}^*}(T^*\mathscr{F}_\mathbf{v})$ be the coefficient of $\hbar u^{-r-1}$ of the Chern polynomial $\lambda_{u^{-1}}(E_{i,\mathbf{v}})$, and
set
\begin{equation}\label{equ:geoHir}
    \mathscr{H}_{i,r}=\sum\limits_{v\in\Lambda_{\mathfrak{c},d}}\mathscr{H}_{i,\mathbf{v},r}\cdot [T^*\mathscr{F}_\mathbf{v}]\in H_*^{G\times\mathbb{C}^*}(T^*\mathscr{F}).
\end{equation}
Since the Steinberg variety $Z$ contains the diagonal copy of $T^*\mathscr{F}$, we can regard the classes $\mathscr{H}_{i,r}$ as elements in $H_*^{G\times\mathbb{C}^*}(Z)$. This will geometrize the operator $H_{i,r}$ in \eqref{equ:Hivu}.

\begin{prop}\label{prop:Hir}
    Under the isomorphism in Proposition \ref{prop:pushforward}, the convolution action of $\mathscr{H}_{i,r}\in H_{*}^{G\times \mathbb{C}^*}(Z)$ on $H_*^{G\times \C^*}(\mathscr{F})\simeq \mathbf{P}$ is given by the operator $H_{i,r}$ in Section \ref{sec:HBoperators}.
\end{prop}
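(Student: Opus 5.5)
The plan is to reduce the statement to a computation of Chern roots at the torus fixed point $F_{\mathbf{v}}$. First, since $\mathscr{H}_{i,r}$ is supported on the diagonal copy $Z_{\diag(\mathbf{v})}\simeq T^*\mathscr{F}_{\mathbf{v}}$, convolution with a class of the form $c\cdot[T^*\mathscr{F}_{\mathbf{v}}]$, where $c\in H^*_{G\times\C^*}(T^*\mathscr{F}_\mathbf{v})$, acts on $H_*^{G\times\C^*}(\mathscr{F}_{\mathbf{v}})$ by cap product with $c$ (restricted to the zero section). This follows from Lemma \ref{Lemme 2} applied to the diagonal orbit $\mathscr{O}=\Delta\mathscr{F}_{\mathbf{v}}$. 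Here $p_{1,\mathscr{O}}$ is an isomorphism, so its relative cotangent bundle is zero and its Euler factor is $1$; the formula therefore reduces to $\rho_\gamma(\gamma_2)=c\cdot\gamma_2$. Under the isomorphism $H_*^{G\times\C^*}(\mathscr{F}_\mathbf{v})\simeq\mathbf{R}^{[\mathbf{v}]^\mathfrak{c}}[\hbar]$ of Proposition \ref{prop:pushforward}, given by restriction to $F_{\mathbf{v}}$, multiplication by $c$ becomes multiplication by $c|_{F_\mathbf{v}}$. It therefore suffices to show
\[
\lambda_{u^{-1}}(E_{i,\mathbf{v}})|_{F_{\mathbf{v}}}=H_{i,\mathbf{v}}(u),
\]
and then compare coefficients of $\hbar u^{-r-1}$.

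Second, I compute the left side. At $F_\mathbf{v}$, the bundle $\mathcal{V}_{k}-\mathcal{V}_{k-1}$ has $T$-weights $x_s$ for $s\in[\mathbf{v}]_k$, where $x_{s'}=-x_s$. A line bundle $L$ with first Chern class $\ell$ has $\lambda_{u^{-1}}(L)=1+\ell/u$, and a virtual difference gives a ratio. Twisting by $q^{m}$ shifts the Chern root by $-\tfrac m2\hbar$, since $\hbar=c_1(q^{-2})$. For the first term $(q^{-\frac52-n+i}-q^{-\frac12-n+i})(\mathcal V_i-\mathcal V_{i-1})$, the contribution is
\[
\prod_{s\in[\mathbf{v}]_i}\frac{u+x_s+(\tfrac54+\tfrac{n-i}{2})\hbar}{u+x_s+(\tfrac14+\tfrac{n-i}{2})\hbar}.
\]
Setting $z=-u-\frac{n-i}{2}\hbar-\frac\hbar4$, each factor equals $\frac{x_s-z+\hbar}{x_s-z}=\varphi(x_s-z)$, so the product is $\Phi_{[\mathbf{v}]_i}(-u-\frac{n-i}2\hbar-\frac\hbar4)$. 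For the second term, $\mathcal V_{i+1}-\mathcal V_i$ has weights $x_s$ for $s\in[\mathbf{v}]_{i+1}$. I then use the symplectic symmetry $[\mathbf{v}]_{i+1}=\{s'\mid s\in[\mathbf{v}]_{\tau i}\}$ together with $x_{s'}=-x_s$ to rewrite its contribution as $\Phi_{[\mathbf{v}]_{\tau i}}(u+\frac{n-i}2\hbar+\frac\hbar4)$. The remaining summand $q^{(\delta_{i,n}-\delta_{i,n+1})/2}$ is a trivial bundle with a twist, and it contributes
\[
1-\frac{(\delta_{i,n}-\delta_{i,n+1})\hbar}{4u}.
\]
Multiplying the three contributions gives exactly \eqref{equ:Hivu}.

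The main obstacle is the sign and shift bookkeeping in these steps: the convention $\hbar=c_1(q^{-2})$, the use of $q^{1/2}$ through the double cover, and the reindexing $[\mathbf{v}]_{i+1}\leftrightarrow[\mathbf{v}]_{\tau i}$ must line up exactly with the shifts in \eqref{equ:Hivu}. I would check this first on the case $n=1$, $d=1$.

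A further minor point concerns the $\C^*$-equivariant normalization of the diagonal class. The fundamental class $[T^*\mathscr{F}_\mathbf{v}]$ corresponds to the identity operator under the Thom isomorphism, so the identification in the first step requires no extra Euler factor.
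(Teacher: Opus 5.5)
Your proposal is correct and follows the paper's proof essentially step by step. You reduce convolution with the diagonal-supported class to cap product, then compute $\lambda_{u^{-1}}(E_{i,\mathbf{v}})$ from the torus weights at $F_{\mathbf{v}}$, reindexing $[\mathbf{v}]_{i+1}$ as $\{s'\mid s\in[\mathbf{v}]_{\tau i}\}$ and using $\lambda_{u^{-1}}(q^{\pm 1/2})=1\mp\frac{\hbar}{4u}$; your use of Lemma~\ref{Lemme 2} on the diagonal orbit simply makes explicit the first step that the paper asserts without proof.
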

\begin{proof}
  Let $f\in H^{G\times\mathbb{C}^*}_*(\mathscr{F}_\mathbf{v})\simeq\mathbf{R}^{[\mathbf{v}]^\mathfrak{c}}[\hbar]$. Since $\mathscr{H}_{i,r}$ is supported on the diagonal copy of $T^*\mathscr{F}$ inside the Steinberg variety $Z$,
  \[\mathscr{H}_{i,r}\star f=(\mathscr{H}_{i,\mathbf{v},r}\cdot[Z_{\diag(\mathbf{v})}])\star f=\mathscr{H}_{i,\mathbf{v},r}\cdot f,\]
  where $Z_{\diag(\mathbf{v})}=T^*\mathscr{F}_\mathbf{v}$ is the diagonal copy of $T^*\mathscr{F}_\mathbf{v}$ inside $Z$ and $\cdot$ is the action of equivariant cohomology on the equivariant Borel--Moore homology (see Section \ref{subsec:convolution}).

Now we calculate $\lambda_{u^{-1}}(E_{i,\mathbf{v}})$. We use the following basic fact. Recall from the proof of Proposition \ref{prop:pushforward}, $F_\mathbf{v}\in \mathscr{F}_\mathbf{v}$ is the base point. If $E$ is $G$-equivariant vector bundle on $\mathscr{F}_{\mathbf{v}}$ and the torus-weights for the fiber $E|_{F_\mathbf{v}}$ are $\{w_j\}$, then 
\[\lambda_{u^{-1}}(E)=\prod_j(1+u^{-1}w_j)\in \mathbf{R}^{[\mathbf{v}]^{\mathfrak{c}}}[u^{-1}]\cong H_*^{G}(\mathscr{F}_{\mathbf{v}})[u^{-1}].\] 

Therefore, 
\begin{align*}
    &\lambda_{u^{-1}}\bigg((q^{-\frac{5}{2}-n+i}-q^{-\frac{1}{2}-n+i})(\mathcal{V}_{i,\mathbf{v}}-\mathcal{V}_{i-1,\mathbf{v}})+(q^{\frac{3}{2}-n+i}-q^{-\frac{1}{2}-n+i})(\mathcal{V}_{i+1,\mathbf{v}}-\mathcal{V}_{i,\mathbf{v}})\bigg)\\
    =&\frac{\lambda_{u^{-1}}(q^{-\frac{5}{2}-n+i}(\mathcal{V}_{i,\mathbf{v}}-\mathcal{V}_{i-1,\mathbf{v}}))\cdot\lambda_{u^{-1}}(q^{\frac{3}{2}-n+i}(\mathcal{V}_{i+1,\mathbf{v}}-\mathcal{V}_{i,\mathbf{v}}))}{\lambda_{u^{-1}}(q^{-\frac{1}{2}-n+i}(\mathcal{V}_{i,\mathbf{v}}-\mathcal{V}_{i-1,\mathbf{v}}))\cdot\lambda_{u^{-1}}(q^{-\frac{1}{2}-n+i}(\mathcal{V}_{i+1,\mathbf{v}}-\mathcal{V}_{i,\mathbf{v}}))}
    \\
    =&\prod_{t\in [\mathbf{v}]_{i}}\frac{1+u^{-1}(x_t+\frac{n-i}{2}\hbar+\frac{5\hbar}{4})}{1+u^{-1}(x_t+\frac{n-i}{2}\hbar+\frac{\hbar}{4})}\prod_{r\in [\mathbf{v}]_{i+1}}\frac{1+u^{-1}(x_r+\frac{n-i}{2}\hbar-\frac{3\hbar}{4})}{1+u^{-1}(x_r+\frac{n-i}{2}\hbar+\frac{\hbar}{4})}\\
    =&\prod_{t\in [\mathbf{v}]_{i}}\frac{1+u^{-1}(x_t+\frac{n-i}{2}\hbar+\frac{5\hbar}{4})}{1+u^{-1}(x_t+\frac{n-i}{2}\hbar+\frac{\hbar}{4})}\prod_{s\in [\mathbf{v}]_{\tau i}}\frac{1+u^{-1}(-x_s+\frac{n-i}{2}\hbar-\frac{3\hbar}{4})}{1+u^{-1}(-x_s+\frac{n-i}{2}\hbar+\frac{\hbar}{4})}\\
    =&\Phi_{[\mathbf{v}]_{i}}(-u-\frac{n-i}{2}\hbar-\frac{\hbar}{4})\Phi_{[\mathbf{v}]_{\tau i}}(u+\frac{n-i}{2}\hbar+\frac{\hbar}{4}),
\end{align*}
where the third equality follows from the change of variable $s=r'$.
Notice that 
\[\lambda_{u^{-1}}(q^{\frac{1}{2}})=1-\frac{\hbar}{4u}.\]
Hence, for any $\mathbf{v}\in\Lambda_{\mathfrak{c},d}$ and $f\in H^{G\times\mathbb{C}^*}_*(\mathscr{F}_\mathbf{v})$, \[\lambda_{u^{-1}}(E_{i,\mathbf{v}})\cdot f=\mathbf{H}_{i}(u)f.\] 
This finishes the proof.
\end{proof}

\subsection{The geometric $B$-operators}
Recall the generalized Steinberg variety
$$Z:=T^*\mathscr{F}\times_{\mathscr{N}}T^*\mathscr{F}=\bigsqcup\limits_{\mathbf{v},\mathbf{w}\in\Lambda_{\mathfrak{c},d}}T^*\mathscr{F}_{\mathbf{v}}\times_{\mathscr{N}}T^*\mathscr{F}_{\mathbf{w}}.$$ 
Let $Z_{\mathbf{v},\mathbf{w}}:=T^*\mathscr{F}_{\mathbf{v}}\times_{\mathscr{N}}T^*\mathscr{F}_{\mathbf{w}} $. The diagonal $G$-orbits on $\mathscr{F}_{\mathbf{v}}\times \mathscr{F}_{\mathbf{w}}$ are indexed by $\Xi_d(\mathbf{v},\mathbf{w})$, and 
\[Z_{\mathbf{v},\mathbf{w}}=\bigsqcup_{A\in \Xi_d(\mathbf{v},\mathbf{w})}Z_A,\]
where $Z_A:=T^*_{\mathcal{O}_A}(\mathscr{F}_{\mathbf{v}}\times \mathscr{F}_{\mathbf{w}})$ is the conormal bundle of the $G$-orbit corresponding to $A\in \Xi_d(\mathbf{v},\mathbf{w})$.

For any $\mathbf{v}\in \Xi_{\mathfrak{c},d}$ and any $1\leq i\leq 2n$, let 
\begin{equation}\label{equ:v'v''}
    \mathbf{v}'':=\mathbf{v}-\mathbf{e}_i-\mathbf{e}_{\tau i+1},\textit{ and }\mathbf{v}':=\mathbf{v}-\mathbf{e}_i+\mathbf{e}_{i+1}+\mathbf{e}_{\tau i}-\mathbf{e}_{\tau i+1}.
\end{equation}
The matrix $E^\theta_{i,i+1}(\mathbf{v}'',1)$ in \eqref{eq:Eijv} is minimal in the order \eqref{equ:order}. Hence, the $G$-orbit $\mathcal{O}_{E^\theta_{i,i+1}(\mathbf{v}'',1)}$ in $\mathscr{F} \times \mathscr{F}$ is closed, and it is given by 
\begin{align*}
\mathcal{O}_{E_{i,i+1}^{\theta}(\mathbf{v}'',1)}&=\bigg\{(F,F')\mid 
\substack{F=(V_k)_{0\le k \le N} 
\in \mathscr{F}_{\mathbf{v} }\\ F'=(V'_k)_{0\le k \le N}
\in \mathscr{F}_{\mathbf{v}' }}, V'_i\stackrel{1}{\subset} V_i, V_k=V'_k \textit{ if } k\neq i,\tau i\bigg\}.
\end{align*}
We use $\mathcal{L}_{\mathbf{v},i}$ to denote the tautological line bundle on $\mathcal{O}_{E_{i,i+1}^{\theta}(\mathbf{v}'',1)}$ whose stalk at a point $(F,F')$ above is given by $V_i/V'_i$. Under the isomorphism $H_*^{G\times \C^*}(\mathcal{O}_{E_{i,i+1}^{\theta}(\mathbf{v}'',1)})\simeq \mathbf{R}^{[E_{i,i+1}^{\theta}(\mathbf{v}'',1)]^\mathfrak{c}}$ in Proposition \ref{prop:pushforward}, $c_1(\mathcal{L}_{\mathbf{v},i})=x_{\bar{v}_i}$. 

We denote by $p_1$ and $p_2$ the two projections from the orbit $\mathcal{O}_{E_{i,i+1}^{\theta}(\mathbf{v}'',1)}$ to $\mathscr{F}_v$ and $\mathscr{F}_{v'}$, respectively. For $i\neq n+1$, the fiber $p_1^{-1}(F)$ is the Grassmannian $\Gr(v_i,V_{i}/V_{i-1})$. Hence,
\begin{equation}\label{equ:Eu3}
    \Eu(T^*_{p_1})=\prod_{\bar{v}_{i-1}<t< \bar{v}_i}(x_t-x_{\bar{v}_i})\in H_*^{G\times \C^*}(\mathcal{O}_{E_{i,i+1}^{\theta}(\mathbf{v}'',1)})\simeq \mathbf{R}^{[E_{i,i+1}^{\theta}(\mathbf{v}'',1)]^\mathfrak{c}}.
\end{equation}
If $i=n+1$, the fiber $p_1^{-1}(F)$ is 
\begin{align*}
    \bigg\{ F'=& (V'_k)_{0\le k \le N}\in \mathscr{F}_{\mathbf{v}'}\mid V_n\stackrel{1}{\subset} V'_n\subset V'_{n+1}=(V'_n)^\perp\subset V_{n+1}=V_n^\perp,\\
    &V_k=V'_k \textit{ if } k\neq n, n+1\bigg\}.
\end{align*}
Therefore,
\begin{align}\label{equ:Eu2}
    \Eu(T^*_{p_1})
    =&\prod_{\bar{v}_n+1 < t \leq \bar{v}_{n+1}}(x_{\bar{v}_{n}+1}-x_t)\in H_*^{G\times \C^*}(\mathcal{O}_{E_{n+1,n+2}^{\theta}(\mathbf{v}'',1)})
    \simeq \mathbf{R}^{[E_{n+1,n+2}^{\theta}(\mathbf{v}'',1)]^\mathfrak{c}}.
\end{align}

Recall that $Z_{E_{i,i+1}^{\theta}(\mathbf{v}'',1)}\subset Z$ is the conormal bundle of the $G$-orbit $\mathcal{O}_{E_{i,i+1}^{\theta}(\mathbf{v}'',1)}$ inside $\mathscr{F}\times \mathscr{F}$.
Define $\mathscr{B}_{i,\mathbf{v},r}\in H_{*}^{G\times \mathbb{C}^*}(Z)$ by the following formula
$$\mathscr{B}_{i,\mathbf{v},r}:=\sqrt{-1}(-1)^{v_{i}-1+r} \pi^*\bigg(c_1(\mathcal{L}_{\mathbf{v}, i})+\frac{n-i}{2}\hbar+\frac{\hbar}{4}\bigg)^r\cdot[Z_{E_{i,i+1}^{\theta}(\mathbf{v}'',1)}],$$
and set 
\begin{equation}\label{equ:geoBir}
    \mathscr{B}_{i,r}=\sum\limits_{\mathbf{v}''\in\Lambda_{\mathfrak{c},d-1}}\mathscr{B}_{i,\mathbf{v},r}\in H_{*}^{G\times \mathbb{C}^*}(Z).
\end{equation}

This will geometrize the operator $B_{i,r}$ in \eqref{equ:Bope}. Recall $H_{*}^{G\times \mathbb{C}^*}(Z)$ acts faithfully on $H_*^{G\times \C^*}(\mathscr{F})$, see Lemma \ref{lemma:faithful}.

\begin{prop}\label{prop:Bir}
    Under the isomorphism in Proposition \ref{prop:pushforward}, the convolution action of $\mathscr{B}_{i,r}\in H_{*}^{G\times \mathbb{C}^*}(Z)$ on $H_*^{G\times \C^*}(\mathscr{F})\simeq \mathbf{P}$ is given by the above operator $B_{i,r}$ in \eqref{equ:Bope}.
\end{prop}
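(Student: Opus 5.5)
The plan is to compute $\mathscr{B}_{i,\mathbf{v},r}\star f$ for $f\in H_*^{G\times\C^*}(\mathscr{F}_{\mathbf{v}'})\simeq\mathbf{R}^{[\mathbf{v}']^\mathfrak{c}}[\hbar]$ by applying Lemma \ref{Lemme 2} to the closed orbit $\mathscr{O}=\mathcal{O}_{E^\theta_{i,i+1}(\mathbf{v}'',1)}\subset\mathscr{F}_\mathbf{v}\times\mathscr{F}_{\mathbf{v}'}$. The class $\mathscr{B}_{i,\mathbf{v},r}$ corresponds under the Thom isomorphism to $\gamma=\sqrt{-1}(-1)^{v_i-1+r}(c_1(\mathcal{L}_{\mathbf{v},i})+\frac{n-i}{2}\hbar+\frac{\hbar}{4})^r\cap[\mathscr{O}]$. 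Lemma \ref{Lemme 2} then gives
\[\mathscr{B}_{i,\mathbf{v},r}\star f=p_{1*}\Big(\Eu(q^{-2}\otimes T^*_{p_1})\cdot p_2^*f\cdot \gamma\Big).\]
I will evaluate this with Proposition \ref{prop:pushforward}(2), which expresses $p_{1*}$ as the symmetrization $W_{[\mathbf{v}]^\mathfrak{c}}/W_{[E^\theta_{i,i+1}(\mathbf{v}'',1)]^\mathfrak{c}}$ applied to the integrand divided by $\Eu(T_{p_1})$.

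The key steps, in order, are as follows.

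First, identify the ingredients in $\mathbf{R}^{[E^\theta_{i,i+1}(\mathbf{v}'',1)]^\mathfrak{c}}$. We have $c_1(\mathcal{L}_{\mathbf{v},i})=x_{\bar v_i}$. The pullback $p_2^*f$ is $f$ evaluated at the partition in which the index $\bar v_i$ has moved from block $i$ to block $i+1$, and its mirror $\bar v_i'$ has moved from block $\tau i+1$ to block $\tau i$. This is exactly $f(x_{\tau^+_{\bar v_i}[\mathbf{v}]})$. The relative cotangent Euler class is \eqref{equ:Eu3} for $i\neq n+1$ and \eqref{equ:Eu2} for $i=n+1$.

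Second, for $i\neq n+1$, the torus weights of $T^*_{p_1}$ are $x_t-x_{\bar v_i}$ with $t\in[\mathbf{v}]_i\setminus\{\bar v_i\}$. Since $\hbar=c_1(q^{-2})$, twisting by $q^{-2}$ shifts each weight by $\hbar$, and the tangent weights are the negatives. The ratio is therefore
\[\frac{\Eu(q^{-2}\otimes T^*_{p_1})}{\Eu(T_{p_1})}=\prod_{t\in[\mathbf{v}]_i\setminus\{\bar v_i\}}\frac{x_t-x_{\bar v_i}+\hbar}{x_{\bar v_i}-x_t}=(-1)^{v_i-1}\,\Phi_{[\mathbf{v}]_i\setminus\{\bar v_i\}}(x_{\bar v_i}).\]
This sign $(-1)^{v_i-1}$ cancels the one in $\mathscr{B}_{i,\mathbf{v},r}$. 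The factor $(-1)^r$ turns $(x_{\bar v_i}+\frac{n-i}{2}\hbar+\frac{\hbar}{4})^r$ into $(-x_{\bar v_i}-\frac{n-i}{2}\hbar-\frac{\hbar}{4})^r$.

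Third, the coset $W_{[\mathbf{v}]^\mathfrak{c}}/W_{[E]^\mathfrak{c}}$ is represented by the transpositions $(\bar v_i,j)$ for $j\in[\mathbf{v}]_i$, together with the matching transposition of the mirror indices. Summing over these representatives replaces $\bar v_i$ by $j$, which produces exactly the sum over $j\in[\mathbf{v}]_i$ in \eqref{equ:Bope}.

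I expect the case $i=n+1$ (and, symmetrically, $i=n$) to be the main obstacle. For $i=n+1$ the fiber of $p_1$ is an isotropic-type variety, and its Weyl coset involves the $\mathbb{Z}_2$ sign changes. The middle block $[\mathbf{v}]_{n+1}$ is self-mirrored, and the moved index is $\bar v_n+1$, whose mirror lies in the same block. To handle this I will rewrite \eqref{equ:Eu2} in terms of $x_{\bar v_{n+1}}=-x_{\bar v_n+1}$. The product over $t\in(\bar v_n+1,\bar v_{n+1}]$ contains the index $t=\bar v_{n+1}$, which is the mirror of $\bar v_n+1$, and contributes the weight $2x_{\bar v_n+1}$. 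I then need to check that, after twisting and dividing by the tangent Euler class, the ratio still becomes $(-1)^{v_{n+1}-1}\Phi_{[\mathbf{v}]_{n+1}\setminus\{j\}}(x_j)$ with $j=\bar v_{n+1}$. The relation $x_r=-x_{r'}$ turns the factors with $t$ and $t'$ into $\varphi(x_t-x_j)$ and $\varphi(x_{t'}-x_j)$, and the point requiring care is whether the weight $2x$ is accounted for consistently with $\Phi$ omitting only $j$. Finally, the coset representatives there are indexed by $[\mathbf{v}]_{n+1}$ through sign changes and transpositions, and I must match them with the sum over $j\in[\mathbf{v}]_{n+1}$. I will verify this bookkeeping carefully, if necessary by checking the smallest case $d=1$.
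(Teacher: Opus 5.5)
Your proposal is correct and follows the paper's proof essentially step for step: Lemma \ref{Lemme 2} combined with Proposition \ref{prop:pushforward}(2), the Euler classes \eqref{equ:Eu3} and \eqref{equ:Eu2}, cancellation of the sign $(-1)^{v_i-1+r}$, and, for $i=n+1$, the rewriting via $x_{\bar v_n+1}=-x_{\bar v_{n+1}}$, the substitution $t\mapsto t'$, and the coset representatives $(1+\bar v_n,j)$, $\iota_j(1+\bar v_n,j)$ for $j\in[1+\bar v_n,d]$. The point you flagged works out: with $j=\bar v_{n+1}$, the factor coming from $t=\bar v_{n+1}$ is exactly the $s=j'$ factor $\varphi(x_{j'}-x_j)$ of $\Phi_{[\mathbf{v}]_{n+1}\setminus\{j\}}(x_j)$, and the case $i=n$ needs no special treatment, because the $\mathbb{Z}_2$-part of $W_{[E^\theta_{n,n+1}(\mathbf{v}'',1)]^\mathfrak{c}}$ coincides with that of $W_{[\mathbf{v}]^\mathfrak{c}}$.
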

\begin{proof}
     For any $f\in \mathbf{R}^{[\mathbf{v}']}[\hbar]$, $\mathscr{B}_{i,r}\star f= \mathscr{B}_{i,\mathbf{v},r}\star f$. From the proof of Proposition \ref{prop:pushforward}, we see that $p_2^*$ is the natural inclusion map under the isomophism in Proposition \ref{prop:pushforward}(1). Hence, $p_2^*(f)=f$. Let us first assume $i\neq n+1$, then
    \begin{align*}
        &\mathscr{B}_{i,\mathbf{v},r}\star f\\
        =&\sqrt{-1}(-1)^{v_{i}-1}p_{1*}\bigg(\Eu(q^{-2}T^*_{p_1})\cdot (-c_1(\mathcal{L}_{\mathbf{v},i})-\frac{n-i}{2}\hbar-\frac{\hbar}{4})^r\cdot p_2^*f\bigg)\\
        =&\sqrt{-1}(-1)^{v_{i}-1}W_{[\mathbf{v}^\mathfrak{c}]}/W_{[E_{i,i+1}^{\theta}(\mathbf{v}'',1)]^{\mathfrak{c}}}\bigg(\frac{\Eu(q^{-2}T^*_{p_1})}{\Eu(T_{p_1})}\cdot (-x_{\bar{v}_i}-\frac{n-i}{2}\hbar-\frac{\hbar}{4})^r\cdot f\bigg)\\
        =&\sqrt{-1}S_{[\bar{v}_{i-1}+1,\bar{v}_{i}]}/S_{[\bar{v}_{i-1}+1,\bar{v}_{i}-1]}\bigg(\prod_{\bar{v}_{i-1}<t\leq \bar{v}_{i}-1}\frac{\hbar+x_t-x_{\bar{v}_i}}{x_t-x_{\bar{v}_i}}\cdot (-x_{\bar{v}_i}-\frac{n-i}{2}\hbar-\frac{\hbar}{4})^r\cdot f\bigg)\\
        =&\sqrt{-1}\sum_{j\in [\mathbf{v}]_{i}}(j,\bar{v}_i)\bigg(\prod_{\bar{v}_{i-1}<t\leq \bar{v}_{i}-1}\frac{\hbar+x_t-x_{\bar{v}_i}}{x_t-x_{\bar{v}_i}}\cdot (-x_{\bar{v}_i}-\frac{n-i}{2}\hbar-\frac{\hbar}{4})^r\cdot f\bigg)\\
        =&\sqrt{-1}\sum_{j\in [\mathbf{v}]_{i}}\prod_{\bar{v}_{i-1}<t\leq \bar{v}_{i},t\neq j}\frac{\hbar+x_t-x_{j}}{x_t-x_{j}}\cdot (-x_{j}-\frac{n-i}{2}\hbar-\frac{\hbar}{4})^r\cdot (j,\bar{v}_i)f\\
        =&\sqrt{-1}\sum_{j\in [\mathbf{v}]_{i}}(-x_j-\frac{n-i}{2}\hbar-\frac{\hbar}{4})^r\cdot \Phi_{[\mathbf{v}]_{i}\setminus\{j\}}(x_j)\cdot (j,\bar{v}_i) f.
    \end{align*}
    Since $f\in \mathbf{R}^{[\mathbf{v}']}[\hbar]$, for $j\in [\mathbf{v}]_{i}$,
    \[((j,\bar{v}_i) f)(x_{[\mathbf{v}]})=f(x_{\tau^+_j[\mathbf{v}]}).\]
    This finishes the proof for the case $i\neq n+1$.

    Let us consider the case $i=n+1$. By definition, $E_{n+1,n+2}^{\theta}(\mathbf{v}'',1)=E_{n+1,n}^{\theta}(\mathbf{v}'',1)$. Hence, by the same argument as above, 
    \begin{align*}
        &(\mathscr{B}_{n+1,r}\star f)(x_{[\mathbf{v}]})\\
        =&(\mathscr{B}_{n+1,\mathbf{v},r}\star f)(x_{[\mathbf{v}]})\\
        =&\sqrt{-1}(-1)^{v_{n+1}-1}W_{[\mathbf{v}^\mathfrak{c}]}/W_{[E_{n+1,n}^{\theta}(\mathbf{v}'',1)]^{\mathfrak{c}}}\bigg(\frac{\Eu(q^{-2}T^*_{p_1})}{\Eu(T_{p_1})}\cdot (-x_{\bar{v}_{n+1}}+\frac{\hbar}{4})^r\cdot f\bigg)(x_{[\mathbf{v}]})\\
        =&\sqrt{-1}\sum_{j\in [\mathbf{v}]_{n+1}^\mathfrak{c}}(j,1+\bar{v}_n)\bigg(\prod_{\bar{v}_n+1<t\leq \bar{v}_{n+1}}\frac{\hbar+x_{\bar{v}_n+1}-x_t}{x_{\bar{v}_n+1}-x_t}\cdot (x_{1+\bar{v}_n}+\frac{\hbar}{4})^r\cdot f\bigg)(x_{[\mathbf{v}]})\\
        &+\sqrt{-1}\sum_{j\in [\mathbf{v}]_{n+1}^\mathfrak{c}}\iota_j(j,1+\bar{v}_n)\bigg(\prod_{\bar{v}_n+1<t\leq \bar{v}_{n+1}}\frac{\hbar+x_{\bar{v}_n+1}-x_t}{x_{\bar{v}_n+1}-x_t}\cdot (x_{1+\bar{v}_n}+\frac{\hbar}{4})^r\cdot f\bigg)(x_{[\mathbf{v}]})\\
        =&\sqrt{-1}\sum_{j\in [\mathbf{v}]_{n+1}} \prod_{\bar{v}_n+1\leq t\leq \bar{v}_{n+1},t\neq j}\frac{\hbar+x_{j}-x_t}{x_{j}-x_t}\cdot (x_{j}+\frac{\hbar}{4})^r\cdot f(x_{\tau^-_j[\mathbf{v}]})\\
        =&\sqrt{-1}\sum_{j\in [\mathbf{v}]_{n+1}} \prod_{\bar{v}_n+1\leq t\leq \bar{v}_{n+1},t\neq j}\frac{\hbar+x_{j'}-x_{t'}}{x_{j'}-x_{t'}}\cdot (x_{j'}+\frac{\hbar}{4})^r\cdot f(x_{\tau^-_{j'}[\mathbf{v}]})\\
        =&\sqrt{-1}\sum_{j\in [\mathbf{v}]_{n+1}} (-x_{j}+\frac{\hbar}{4})^r\cdot \Phi_{[\mathbf{v}_{n+1}]\setminus\{j\}}(x_j)\cdot f(x_{\tau^+_j[\mathbf{v}]}).
    \end{align*}
    Here the third equality follows from \eqref{equ:Eu2}, $x_{1+\bar{v}_n}=-x_{\bar{v}_{n+1}}$, and the fact that a set of representatives for $W_{[\mathbf{v}^\mathfrak{c}]}/W_{[E_{n+1,n}^{\theta}(\mathbf{v}'',1)]^{\mathfrak{c}}}$ is 
    \[\{(1+\bar{v}_n,j),\iota_{j}(1+\bar{v}_n,j))\mid 1+\bar{v}_n\leq j\leq d\}.\]
    The forth one follows from the change of variable $j\mapsto 2d+1-j=j'$ for the second summand, and  
    \[((\bar{v}_n+1,j)f)(x_{[\mathbf{v}]})=f(x_{\tau^-_j[\mathbf{v}]}),\quad \textit{ and } (\iota_j(\bar{v}_n+1,j)f)(x_{[\mathbf{v}]})=f(x_{\tau_{2d+1-j}^-[\mathbf{v}]}),\]
    for any $j\in [\mathbf{v}]_{n+1}^\mathfrak{c}$, $f\in \mathbf{R}^{[\mathbf{v}']^{\mathfrak{c}}}[\hbar]$. Finally, the fifth one follows from the change of variables $j\mapsto j'$ and $t\mapsto t'$.
    This finishes the proof.
\end{proof}

\subsection{Involution}
There is an involution on $Z$ obtained by switching the two factors $\mathscr{M}\times \mathscr{M}$, which induces the map $\varphi:\gamma\mapsto \gamma^t$ on $H_*^{G\times\mathbb{C}^*}(Z)$. For the convolution product, we have (see \cite{CG97})
\begin{equation*}\label{equ:inv}
    (\gamma_1\star\gamma_2)^t = \gamma_2^t\star \gamma_1^t.
\end{equation*}
Hence, this defines an algebra anti-involution $\varphi$ on $H_*^{G\times\mathbb{C}^*}(Z)$. Recall the $\mathbf{v}',\mathbf{v}''$ defined in \eqref{equ:v'v''}.
\begin{lem}\label{lem:inv}
    The following holds
    \[\varphi(\mathscr{H}_{i,r})=\mathscr{H}_{i,r},\textit{ and }\varphi(\mathscr{B}_{i,\mathbf{v},r})=(-1)^{v_i+v_{\tau i}+r}\mathscr{B}_{\tau i,\mathbf{v}',r}.\]
\end{lem}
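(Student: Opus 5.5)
The first identity is immediate. $\mathscr{H}_{i,r}$ is a cohomology class times the fundamental class of the diagonal copy of $T^*\mathscr{F}$ in $Z$. The swap of factors fixes the diagonal pointwise and preserves its fundamental class. The class $\mathscr{H}_{i,\mathbf{v},r}$ is pulled back from the diagonal, so it is preserved as well, and $\varphi(\mathscr{H}_{i,r})=\mathscr{H}_{i,r}$.

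For $\mathscr{B}_{i,\mathbf{v},r}$ I plan to identify the transposed orbit directly. Under the swap, the orbit $\mathcal{O}_{E^\theta_{i,i+1}(\mathbf{v}'',1)}\subset \mathscr{F}_{\mathbf{v}}\times\mathscr{F}_{\mathbf{v}'}$ goes to the orbit of pairs $(F',F)$, which is indexed by the transpose matrix $E^\theta_{i,i+1}(\mathbf{v}'',1)^t=\diag(\mathbf{v}'')+E_{i+1,i}+E_{\tau i,\tau i+1}$. Relabel $j=\tau i$ and use $E^\theta_{ij}=E^\theta_{\tau i+1,\tau j+1}$. Then this matrix is $E^\theta_{\tau i,\tau i+1}(\mathbf{v}'',1)$, with the same $\mathbf{v}''$. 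One checks this against \eqref{equ:v'v''} for the index $\tau i$ starting from $\mathbf{v}'$: we need $\mathbf{v}'-\mathbf{e}_{\tau i}-\mathbf{e}_{i+1}=\mathbf{v}''$, which holds, and $(\mathbf{v}')'=\mathbf{v}$. Concretely, $(F',F)$ satisfies $V_{\tau i}\stackrel{1}{\subset}V'_{\tau i}$, which is the defining condition of the orbit for the index $\tau i$. Since the swap is a symplectomorphism up to sign of the form, conormal bundles go to conormal bundles. Hence $[Z_{E^\theta_{i,i+1}(\mathbf{v}'',1)}]^t=[Z_{E^\theta_{\tau i,\tau i+1}(\mathbf{v}'',1)}]$; orientations come from complex structures, so no sign appears here.

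Next I compare the line bundles. On the swapped orbit, $\mathcal{L}_{\mathbf{v}',\tau i}$ has fibre $V'_{\tau i}/V_{\tau i}$. Since $V_{\tau i}=V_i^\perp$ and $V'_{\tau i}=V_i'^{\perp}$, the symplectic form identifies $V'_{\tau i}/V_{\tau i}$ with the dual of $V_i/V'_i$. So $c_1(\mathcal{L}_{\mathbf{v}',\tau i})=-c_1(\mathcal{L}_{\mathbf{v},i})$; equivalently, $x_{\bar v'_{\tau i}}=-x_{\bar v_i}$ under the isomorphism of Proposition \ref{prop:pushforward}. The shift for the index $\tau i$ is $\frac{n-\tau i}{2}\hbar+\frac{\hbar}{4}=-\frac{n-i}{2}\hbar-\frac{\hbar}{4}+\frac{\hbar}{2}+\frac{\hbar}{4}$, using $\tau i=2n+1-i$. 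This is where care is needed: the two shifts must match up to the overall sign, and I would double-check whether the $\frac{\hbar}{2}$-discrepancy is absorbed. Such a discrepancy would have to come from the $\C^*$-weight of the symplectic pairing, since the form on $V$ has trivial $\C^*$-weight. If the constants do match, then $\bigl(c_1(\mathcal{L}_{\mathbf{v}',\tau i})+\frac{n-\tau i}{2}\hbar+\frac{\hbar}{4}\bigr)^r=(-1)^r\bigl(c_1(\mathcal{L}_{\mathbf{v},i})+\frac{n-i}{2}\hbar+\frac{\hbar}{4}\bigr)^r$. Combining the prefactors $(-1)^{v_i-1+r}$ and $(-1)^{v'_{\tau i}-1+r}$ with $v'_{\tau i}=v_{\tau i}+1$ gives the sign $(-1)^{v_i+v_{\tau i}+r}$, which is what the lemma states.

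The main obstacle is this shift bookkeeping. If the direct geometric comparison is awkward, I would instead argue through faithfulness (Lemma \ref{lemma:faithful}): compute the adjoint of the operator $B_{i,r}$ of \eqref{equ:Bope} with respect to the pairing given by pushforward to a point, via the localization formula \eqref{equ:local}. The transpose of a correspondence acts by the adjoint operator, so this reduces the check to the explicit formulas in Proposition \ref{prop:Bir}, where the shifts can be compared term by term.
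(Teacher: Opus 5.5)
Your route is the same as the paper's: you identify the transposed orbit with $\mathcal{O}_{E^\theta_{\tau i,\tau i+1}(\mathbf{v}'',1)}$, and you get $\varphi(c_1(\mathcal{L}_{\mathbf{v},i}))=-c_1(\mathcal{L}_{\mathbf{v}',\tau i})$ from $V_{\tau i}=V_i^\perp$ and $V'_{\tau i}=V_i'^{\perp}$. The paper then simply says the lemma follows. However, both computations you add on top of this are wrong, and the first one leaves your proof conditional.

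The ``$\frac{\hbar}{2}$-discrepancy'' is an arithmetic slip. Since $n-\tau i=i-n-1$, we have $\frac{n-\tau i}{2}\hbar+\frac{\hbar}{4}=-\frac{n-i}{2}\hbar-\frac{\hbar}{2}+\frac{\hbar}{4}=-\bigl(\frac{n-i}{2}\hbar+\frac{\hbar}{4}\bigr)$. Equivalently, the shift equals $\frac{N-2i}{4}\hbar$, which is odd under $i\mapsto N-i$. Hence
\[
\varphi\Bigl(c_1(\mathcal{L}_{\mathbf{v},i})+\tfrac{n-i}{2}\hbar+\tfrac{\hbar}{4}\Bigr)=-\Bigl(c_1(\mathcal{L}_{\mathbf{v}',\tau i})+\tfrac{n-\tau i}{2}\hbar+\tfrac{\hbar}{4}\Bigr)
\]
holds exactly. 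Nothing comes from $\C^*$-weights, and the fallback via adjoint operators is unnecessary. Replace your conditional paragraph with this one line.

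Your sign count is also off. The factors are $(-1)^{v_i-1+r}$, the $(-1)^r$ from the power, and $(-1)^{v'_{\tau i}-1+r}$. They combine to $(-1)^{v_i+v'_{\tau i}+r}$, not $(-1)^{v_i+v_{\tau i}+r}$. Moreover, $v'_{\tau i}=v_{\tau i}+1$ holds only for $i\neq n$; for $i=n$ one has $\tau n=n+1$ and $v'_{n+1}=v_{n+1}+2$. So the argument actually proves
\[
\varphi(\mathscr{B}_{i,\mathbf{v},r})=(-1)^{v_i+v'_{\tau i}+r}\,\mathscr{B}_{\tau i,\mathbf{v}',r}.
\]
This agrees with the displayed sign for $i=n$ but differs from it by $-1$ for $i\neq n$.

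The displayed sign cannot hold for all $i$. Since $\varphi^2=\mathrm{id}$, applying it for $i=n$ and then for $i=n+1$ at $\mathbf{v}'$ would give $\varphi^2(\mathscr{B}_{n,\mathbf{v},r})=-\mathscr{B}_{n,\mathbf{v},r}$. Concretely, take $d=n=1$, $\mathbf{w}=(0,2,0)$ and $\mathbf{w}'=(1,0,1)$. The definitions give $\mathscr{B}_{2,\mathbf{w},0}=-\sqrt{-1}\,[Z_{E_{21}+E_{23}}]$ and $\mathscr{B}_{1,\mathbf{w}',0}=\sqrt{-1}\,[Z_{E_{12}+E_{32}}]$. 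Hence $\varphi(\mathscr{B}_{2,\mathbf{w},0})=-\mathscr{B}_{1,\mathbf{w}',0}$, whereas the displayed formula predicts $+$.

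So derive the sign honestly rather than forcing agreement with the target. The method is fine, but the correct statement has $v'_{\tau i}$ in place of $v_{\tau i}$.
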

\begin{proof}
    The first one holds as the classes $\mathscr{H}_{i,r}$ live on the diagonal copy $T^*\mathscr{F}$. Let us prove the second equation. First of all, $E_{i,i+1}^{\theta}(\mathbf{v}'',1)=E_{\tau i+1,\tau i}^{\theta}(\mathbf{v}'',1)$. Notice that under the switching of factors, the $G$-orbit $\mathcal{O}_{E_{i,i+1}^{\theta}(\mathbf{v}'',1)}\subset \mathscr{F}_\mathbf{v}\times \mathscr{F}_{\mathbf{v}'}$ becomes $\mathcal{O}_{E_{\tau i, \tau i+1}^{\theta}(\mathbf{v}'',1)}\subset \mathscr{F}_{\mathbf{v}'}\times \mathscr{F}_{\mathbf{v}}$, and the tautological line bundle $\mathcal{L}_{\mathbf{v},i}$ on $\mathcal{O}_{E_{i,i+1}^{\theta}(\mathbf{v}",1)}$ becomes the line bundle on $\mathcal{O}_{E_{\tau i, \tau i+1}^{\theta}(\mathbf{v}'',1)}$ whose fiber over a point $(F'=(V_k'),F=(V_k))$ is $V_i/V_i'$. Recall in the definition of $\mathscr{B}_{\tau i,\mathbf{v}',r}$, we used the line bundle $\mathcal{L}_{\mathbf{v}',\tau i}$, whose fiber over a point $(F'=(V_k'),F=(V_k))$ is $V_{\tau i}'/V_{\tau i}$. Since $V_i^\perp=V_{\tau i}$ and $(V'_i)^\perp=V'_{\tau i}$. Therefore, 
    \[\varphi(c_1(\mathcal{L}_{\mathbf{v},i}))=-c_1(\mathcal{L}_{\mathbf{v}',\tau i}).\]
    Hence, the lemma follows.
\end{proof}

\section{Main results}\label{sec:main}

In this section, we formulate the main result of this paper, which give a geometric realization of the twisted Yangian $\mathbf{Y}^\imath$ of quasi-split type $\AIII_{2n}^{(\tau)}$ via the equivariant Borel--Moore homology of the Steinberg variety in Section \ref{steinberg variety of type c}.

\subsection{Twisted Yangian}

Let $(c_{ij})_{2n\times 2n}$ be the Cartan matrix of type $A_{2n}$. As before, for $1\leq i\leq 2n$, $\tau i:=N-i=2n+1-i$. The following is the Drinfeld new presentation for the quasi-split twisted Yangian of type $\AIII_{2n}^{(\tau)}$, see \cite{LZ25}. We write $\{x,y\}=xy+yx$.
\begin{definition}\label{def:Y}
    The twisted Yangian of quasi-split type $\AIII_{2n}^{(\tau)}$, denoted by $\mathbf{Y}^\imath$, is the $\mathbb{C}[\hbar]$-algebra generated by $h_{i,r},b_{i,r},1\leq i\leq 2n,r\in\mathbb{N},$ subject to the following relations
    \begin{align*}
        &[h_{i,r},h_{j,s}]=0,\qquad h_{\tau i,0}=-h_{i,0},\\&[h_{i,0},b_{j,r}]=(c_{ij}-c_{\tau i,j})b_{j,r},\\&[h_{i,1},b_{j,r}]=(c_{ij}+c_{\tau i,j})b_{j,r+1}+\frac{\hbar(c_{ij}-c_{\tau i,j})}{2}\{h_{i,0},b_{j,r}\},\\&[h_{i,r+2},b_{j,s}]-[h_{i,r},b_{j,s+2}]\\\quad&=\frac{c_{ij}-c_{\tau i,j}}{2}\hbar\{h_{i,r+1},b_{j,s}\}+\frac{c_{ij}+c_{\tau i,j}}{2}\hbar\{h_{i,r},b_{j,s+1}\}+\frac{c_{ij}c_{\tau i,j}}{4}\hbar^2[h_{i,r},b_{j,s}],\\&
        [b_{i,r+1},b_{j,s}]-[b_{i,r},b_{j,s+1}]=\frac{c_{ij}}{2}\hbar\{b_{i,r},b_{j,s}\}-2\delta_{\tau i,j}(-1)^rh_{j,r+s+1},
    \end{align*}
    and the Serre relations: for $c_{ij}=0$,
    $$[b_{i,r},b_{j,s}]=\delta_{\tau i,j}(-1)^rh_{j,r+s},$$
    and for $c_{ij}=-1,j\neq\tau i\neq i$,
    $$\mathrm{Sym}_{k_1,k_2}[b_{i,k_1},[b_{i,k_2},b_{j,r}]]=0,$$
    and for $c_{i,\tau i}=-1$,$$\mathrm{Sym}_{k_1,k_2}[b_{i,k},[b_{i,k_2},b_{\tau i,r}]]=\frac{4}{3}\mathrm{Sym}_{k_1,k_2}(-1)^{k_1}\sum\limits_{p=0}^{k_1+r}3^{-p}[b_{i,k_2+p},h_{\tau i,k_1+r-p}],$$ where $h_{i,s}=0$ if $s<0$.
\end{definition}
It follows from the definition that for any $r\geq 0$, $h_{\tau i,r}=(-1)^{r+1}h_{i,r}$.
Now we restate these relations in terms of generating functions. Let 
$$b_{i}(u):=\hbar\sum\limits_{r\ge0}b_{i,r}u^{-r-1},\quad h_{i}(u):=1+\hbar\sum\limits_{r\ge0}h_{i,r}u^{-r-1},\quad h^{\circ}_{i}(u):=h_i(u)-1.$$
\begin{prop}\cite[Theorem 3.9]{LZ25}\label{prop:Ygen}
    The defining relations for the twisted Yangian $\mathbf{Y}^\imath$ of quasi-split type $\AIII_{2n}^{(\tau)}$ can be reformulated as follows:
    \begin{align*}
    [h_i(u),h_j(v)]=0,\qquad h_{\tau i}(u)=h_i(-u),
    \end{align*}
    \begin{align}\label{equ:bibj}
    (u-v)[b_i(u),b_j(v)]=&\frac{c_{ij}}{2}\hbar\{b_i(u),b_j(v)\}+\hbar([b_{i,0},b_j(v)]-[b_i(u),b_{j,0}])\\&-\delta_{\tau i,j}\hbar\bigg(\frac{2u}{u+v}h^{\circ}_i(u)+\frac{2v}{u+v}h^{\circ}_j(v)\bigg),\notag
    \end{align}
    \begin{align}\label{equ:hibj}
    (u^2-v^2)[h_i(u),b_j(v)]=&\frac{c_{ij}-c_{\tau i,j}}{2}\hbar u\{h_i(u),b_j(v)\}+\frac{c_{ij}+c_{\tau i,j}}{2}\hbar v\{h_i(u),b_j(v)\}\\&+\frac{c_{ij}c_{\tau i,j}}{2}\hbar^2[h_i(u),b_j(v)]-\hbar[h_{i}(u),b_{j,1}]\notag\\
    &-\hbar v[h_i(u),b_{j,0}]-\frac{c_{ij}+c_{\tau i,j}}{2}\hbar^2\{h_i(u),b_{j,0}\},\notag
    \end{align}
    \begin{align}\label{equ:bibjcij0}
    (u+v)[b_i(u),b_j(v)]=\delta_{\tau i,j}\hbar(h_j(v)-h_i(u)),\qquad (c_{ij}=0),
    \end{align}
    \begin{align}\label{equ:fserreij}
    [b_{i,0},[b_{i,0},b_{j,0}]]=0,\qquad (c_{ij}=-1,j\neq \tau i),
    \end{align}
    \begin{align}\label{equ:fserre}
    [b_{i,0},[b_{i,0},b_{\tau i,0}]]=4b_{i,0},\qquad (i=n,\textit{ or } n+1).
    \end{align}
\end{prop}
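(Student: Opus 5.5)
The plan is to derive each generating-function relation from the coefficientwise relations of Definition \ref{def:Y}, and conversely to recover the latter by comparing coefficients of $u^{-r-1}v^{-s-1}$. The algebraic identity behind everything is
\[
\hbar^{-1}(u-v)\,b_i(u)=\sum_{r\ge0}b_{i,r+1}u^{-r-1}-\sum_{r\geq 0} v\, b_{i,r}u^{-r-1}+b_{i,0},
\]
which shifts indices. Multiplying a commutator of generating series by $(u-v)$ therefore produces $[b_{i,r+1},b_{j,s}]-[b_{i,r},b_{j,s+1}]$ together with boundary terms involving $b_{i,0}$ and $b_{j,0}$.

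First, the $hh$-relations. Commutativity is immediate. The identity $h_{\tau i,r}=(-1)^{r+1}h_{i,r}$, which is stated after the definition, gives $h_{\tau i}(u)=1+\hbar\sum_r(-1)^{r+1}h_{i,r}u^{-r-1}=h_i(-u)$.

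Second, the $bb$-relation \eqref{equ:bibj}. I multiply the coefficientwise relation by $\hbar^2u^{-r-1}v^{-s-1}$ and sum over $r,s$. The left side becomes $(u-v)[b_i(u),b_j(v)]$ minus the boundary terms $\hbar[b_{i,0},b_j(v)]-\hbar[b_i(u),b_{j,0}]$. For the $h$-term I need to sum $\sum_{r,s}(-1)^r h_{j,r+s+1}u^{-r-1}v^{-s-1}$. Setting $m=r+s+1$, this equals
\[
\sum_m h_{j,m}\sum_{r=0}^{m-1}(-1)^ru^{-r-1}v^{-m+r},
\]
and the inner geometric sum is $\frac{v^{-m}-(-u)^{-m}}{u+v}$. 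After using $h_{i}(u)=h_j(-u)$ for $j=\tau i$, this reorganizes into $\frac{2u}{u+v}h_i^\circ(u)+\frac{2v}{u+v}h_j^\circ(v)$, where the constants must be checked carefully; the extra factors of $u$ and $v$ arise from the $\hbar$ normalization and the index shift. The case $c_{ij}=0$, i.e. \eqref{equ:bibjcij0}, follows by the same geometric summation applied to $[b_{i,r},b_{j,s}]=\delta_{\tau i,j}(-1)^rh_{j,r+s}$, multiplied by $(u+v)$.

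Third, the $hb$-relation \eqref{equ:hibj}, which I expect to be the main obstacle. Multiplying by $(u^2-v^2)$ encodes exactly the degree-two shift $[h_{i,r+2},b_{j,s}]-[h_{i,r},b_{j,s+2}]$. The boundary terms in $r$ and $s$ must be matched with the relations for $[h_{i,0},b_{j,r}]$ and $[h_{i,1},b_{j,r}]$; these produce the terms $-\hbar[h_i(u),b_{j,1}]$, $-\hbar v[h_i(u),b_{j,0}]$ and the anticommutator with $b_{j,0}$. The bookkeeping here is where errors are likely. I would handle it by extending $h_{i,-1}:=1/\hbar$, so that $h_i(u)$ is a full series, and then check the low-degree coefficients $r\in\{-1,0\}$ separately against the $h_{i,0}$ and $h_{i,1}$ relations.

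Finally, the Serre relations. For $c_{ij}=-1$ the higher Serre relations should follow from their $k_1=k_2=r=0$ cases together with the $hb$-relations, by the standard argument: $h_{i,1}$-type operators raise the indices of $b$. Here I would invoke the corresponding reduction in \cite{LZ25}. For \eqref{equ:fserre}, specializing the stated relation to $k_1=k_2=r=0$ gives
\[
2[b_{i,0},[b_{i,0},b_{\tau i,0}]]=\tfrac83[b_{i,0},h_{\tau i,0}].
\]
Since $[h_{\tau i,0},b_{i,0}]=(c_{\tau i,i}-c_{ii})b_{i,0}=-3b_{i,0}$, the right side equals $8b_{i,0}$, and dividing by $2$ yields $4b_{i,0}$.
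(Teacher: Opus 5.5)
The paper gives no argument of its own for this proposition. It quotes \cite[Theorem 3.9]{LZ25}, and in the Remark that follows it quotes \cite[Proposition 3.12]{LZ25} for the fact that \eqref{equ:fserre}, together with the other relations, implies the full inhomogeneous Serre relation of Definition \ref{def:Y}.

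Your direct coefficient comparison is a legitimate self-contained alternative for everything except the Serre converse, and the computations you sketch are correct:
\begin{itemize}
\item In \eqref{equ:bibj} the constants close up. Use $\sum_{m\ge1}h_{j,m}v^{-m}=\hbar^{-1}vh^\circ_j(v)-h_{j,0}$; the two $h_{j,0}$'s cancel, and $h^\circ_j(-u)=h^\circ_i(u)$ for $j=\tau i$ gives exactly $-\frac{2\hbar}{u+v}\bigl(uh^\circ_i(u)+vh^\circ_j(v)\bigr)$.
\item The derivation of \eqref{equ:bibjcij0} is correct. For its converse, use that $u+v$ acts injectively on $u^{-1}v^{-1}\mathbf{Y}^\imath[[u^{-1},v^{-1}]]$.
\item The reduction of the $k_1=k_2=r=0$ case to $4b_{i,0}$ is correct.
\item Your device $h_{i,-1}:=\hbar^{-1}$ works once you also set $h_{i,-2}:=0$. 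The degree-two recursion at $r=-2$ and $r=-1$ is then literally the $[h_{i,0},b_{j,s}]$ and $[h_{i,1},b_{j,s}]$ relations, i.e.\ the $u^{1}$ and $u^{0}$ coefficients.
\end{itemize}
One attribution in your sketch is off. The boundary terms $-\hbar[h_i(u),b_{j,1}]$, $-\hbar v[h_i(u),b_{j,0}]$ and $-\frac{c_{ij}+c_{\tau i,j}}{2}\hbar^2\{h_i(u),b_{j,0}\}$ do not come from the $h_{i,0}$ and $h_{i,1}$ relations, as you suggest. They come from the $v$-truncations $\hbar\sum_s b_{j,s+2}v^{-s-1}=v^2b_j(v)-\hbar vb_{j,0}-\hbar b_{j,1}$ and $\hbar\sum_s b_{j,s+1}v^{-s-1}=vb_j(v)-\hbar b_{j,0}$.

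Doing this bookkeeping explicitly also catches a misprint that the citation hides. The term $[h_i(u),b_j(v)]$ comes out with coefficient $\frac{c_{ij}c_{\tau i,j}}{4}\hbar^2$, not the $\frac{c_{ij}c_{\tau i,j}}{2}\hbar^2$ printed in \eqref{equ:hibj}. The $\frac14$ is the correct one, for three reasons:
\begin{itemize}
\item It is what Definition \ref{def:Y} forces.
\item It is the version the paper actually verifies in \eqref{equ:hibjche}.
\item The product $c_{ij}c_{\tau i,j}$ is nonzero only for $i,j\in\{n,n+1\}$. For $i=j=n$ the polynomial representation satisfies the $\frac14$ version (this is the factor $u^2-v^2+\frac{\hbar^2}{2}$ in Section \ref{sec:relations}) but not the $\frac12$ version.
\end{itemize}
So you should state that you are proving the corrected relation; trying to match the printed constant would fail.

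Finally, your index-raising sketch is the right idea for the homogeneous Serre relations. Work with $\tilde h_{k,1}:=h_{k,1}-\frac{\hbar}{2}h_{k,0}^2$, which satisfies $[\tilde h_{k,1},b_{j,r}]=(c_{kj}+c_{\tau k,j})b_{j,r+1}$. But shifting $b$-indices alone does not produce the inhomogeneous tail $\sum_p 3^{-p}[b_{i,k_2+p},h_{\tau i,k_1+r-p}]$. For \eqref{equ:fserre} you only checked the specialization, so for its converse you must cite \cite[Proposition 3.12]{LZ25} explicitly, exactly as the paper does.
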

\begin{rem}
    It is proved in \cite[Proposition 3.12]{LZ25} that the relation \eqref{equ:fserre} and the other relations will imply the last relation in Definition \ref{def:Y}. 
\end{rem}

\subsection{The algebra homomorphism}
Recall $\mathbf{P}:=\bigoplus_{\mathbf{v}\in \Lambda_{\mathfrak{c},d}}\mathbf{R}^{[\mathbf{v}]^\mathfrak{c}}[\hbar]\simeq H_*^{G\times \C^*}(\mathscr{F})$. In Section \ref{sec:operators}, we have defined the operators $B_{i,r}, H_{i,r}$ on $\mathbf{P}$, and we also defined elements $\mathscr{B}_{i,r},\mathscr{H}_{i,r}$ in $H_*^{G\times \C^*}(Z)$, see \eqref{equ:Hivu},\eqref{equ:Bope},\eqref{equ:geoHir} and \eqref{equ:geoBir}.

\begin{thm}\label{thm:polyrep}
    The following assignment:
    \begin{align*}
    b_{i,r}\mapsto B_{i,r},\quad h_{i,r}\mapsto H_{i,r},\quad \textit{ for } 1\le i\le 2n, r\geq 0
    \end{align*}
    defines a representation of the twisted Yangian $\mathbf{Y}^\imath$ of quasi-split type $\AIII_{2n}^{(\tau)}$ on the space $\mathbf{P}$.
\end{thm}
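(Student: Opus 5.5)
We will verify directly that the explicit difference-type operators $H_{i,r}$ and $B_{i,r}$ on $\mathbf{P}$ satisfy the generating-function relations of Proposition \ref{prop:Ygen}. By the remark following it, these imply all relations of Definition \ref{def:Y}. Set
\[
\mathbf{B}_i(u):=\hbar\sum_{r\ge0}B_{i,r}u^{-r-1}.
\]
By \eqref{equ:Bope}, summing the geometric series gives
\[
(\mathbf{B}_i(u)f)(x_{[\mathbf{v}]})=\sqrt{-1}\,\hbar\sum_{j\in[\mathbf{v}]_i}\frac{1}{u+x_j+\frac{n-i}{2}\hbar+\frac{\hbar}{4}}\,\Phi_{[\mathbf{v}]_i\setminus\{j\}}(x_j)\,f(x_{\tau_j^+[\mathbf{v}]}).
\]
Every relation then becomes an identity between finite sums of rational functions in $u$, $v$ and the $x$'s, with simple poles in $u$ and $v$.

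\textbf{Easy relations.} The relations $[\mathbf{H}_i(u),\mathbf{H}_j(v)]=0$ and $\mathbf{H}_{\tau i}(u)=\mathbf{H}_i(-u)$ are immediate: the $\mathbf{H}_i(u)$ act as multiplication operators, and the symmetry $H_{\tau i,\mathbf{v}}(-u)=H_{i,\mathbf{v}}(u)$ was already noted.

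\textbf{The $hb$ relation \eqref{equ:hibj}.} The operator $\mathbf{H}_i(u)\mathbf{B}_j(v)\mathbf{H}_i(u)^{-1}$ multiplies the $j$-th summand of $\mathbf{B}_j(v)$ by the ratio $H_{i,\mathbf{v}}(u)/H_{i,\tau^+_k\mathbf{v}}(u)$, where $k$ is the moved index. This ratio is a product of at most four factors of $\varphi$, depending only on $x_k$ and $u$. Using $c_{ij}$ and $c_{\tau i,j}$, it equals $\varphi(\cdot)$-quotients at $\pm u$. The relation then reduces to the standard one-variable partial-fraction identity, as in the type A Yangian. The terms $[h_i(u),b_{j,0}]$ and $[h_i(u),b_{j,1}]$ account for removing the polynomial part in $v$. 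I will split into cases according to whether $j\in\{i,i\pm1,\tau i,\tau i\pm1\}$, paying special attention to $i,j\in\{n,n+1\}$, where the extra factor $(1\mp\frac{\hbar}{4u})$ enters.

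\textbf{The $bb$ relations.} For $[\mathbf{B}_i(u),\mathbf{B}_j(v)]$, I will compute both compositions as double sums over moved indices $(k,l)$. When $j\neq\tau i,\tau i\pm1$, the terms with distinct moves match up as in the Drinfeld Yangian computations of Varagnolo / Vasserot. The residual terms then give \eqref{equ:bibj} through the identity
\[
\frac{1}{u+a}\cdot\frac{1}{v+b}(u-v)=\frac{1}{v+b}-\frac{1}{u+a}+\frac{a-b}{(u+a)(v+b)}.
\]
The genuinely new case is $j=\tau i$. Here $B_{\tau i}$ moves $k'$, and composing a move of $k$ with a move of $k'$ returns to the same $\mathbf{v}$. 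These diagonal terms must produce the multiplication terms $-\delta_{\tau i,j}\hbar\bigl(\frac{2u}{u+v}h^\circ_i(u)+\frac{2v}{u+v}h^\circ_j(v)\bigr)$, and, for $c_{ij}=0$, the relation \eqref{equ:bibjcij0}. To identify these diagonal sums with $H_{i,\mathbf{v}}(u)-1$, I will use the residue formula exactly as in the proof of Lemma \ref{lem:Ea1}: $\sum_k \frac{1}{u+x_k+c}\prod_{t\neq k}\varphi(\cdot)$ equals a contour integral whose residue at $\infty$ and at $z=-u-c$ yields $\Phi_{[\mathbf{v}]_i}(-u-c)\Phi_{[\mathbf{v}]_{\tau i}}(\cdots)$. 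The shift $\frac{n-i}{2}\hbar+\frac{\hbar}{4}$ is designed precisely so that the two residues at $\pm u$ combine.

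\textbf{Serre relations.} The Serre relation \eqref{equ:fserreij} follows as in type A by restricting to a few adjacent blocks. The relation \eqref{equ:fserre} for $i=n$ involves the middle block $[\mathbf{v}]_{n+1}$ containing pairs $r,r'$ with $x_{r'}=-x_r$, and this is where the $\frac{\hbar}{4}$ correction matters. I will use the anti-involution of Lemma \ref{lem:inv} to halve the cases, since it interchanges $\mathscr{B}_{i}$ and $\mathscr{B}_{\tau i}$ and fixes the $\mathscr{H}$. I expect the main obstacle to be the $j=\tau i$ relations for $i=n,n+1$, together with \eqref{equ:fserre}. In those cases the index sets overlap through the middle block, and the sign bookkeeping from $\sqrt{-1}$ and $x_{r'}=-x_r$ is delicate. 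If needed, I will reduce to small rank ($d\le 3$, $n=1$), since the operators only involve the blocks $i,i+1,\tau i,\tau i+1$.
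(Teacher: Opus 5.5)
Your plan follows the paper's route: verify the generating-function relations of Proposition \ref{prop:Ygen} directly on $\mathbf{P}$, match the coefficients of each shifted $f$ in the double sums, produce the $h$-terms for $j=\tau i$ from the terms that return to $\mathbf{v}$ by partial fractions (the paper uses Lemma \ref{lem:Hexp} with the truncation identity \eqref{equ:trundivbyz}), and halve the cases with Lemma \ref{lem:inv}. The genuine gap is \eqref{equ:fserre}.

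\textbf{The relation \eqref{equ:fserre}.} Your diagnosis there is wrong. Only $B_{n,0}$ and $B_{n+1,0}$ occur, and in \eqref{equ:Bope} the shift enters only through the factor raised to the power $r$, so $\frac{\hbar}{4}$ plays no role. It matters instead in \eqref{equ:bibj} for $(i,j)=(n,n+1)$ and in \eqref{equ:hibj} for $i=j=n$. In the first of these, together with $c_{n,n+1}=-1$ and the factor $\varphi(2x_t)$, it makes the diagonal terms come out with numerators $\hbar(x_t+\frac{\hbar}{2})$ and $\hbar(x_s-\frac{\hbar}{2})$, which match $(uH_{n,\mathbf{v}}(u))^\circ$.

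More importantly, you give no mechanism that produces the inhomogeneous right-hand side $4b_{n,0}$. In the paper, two families of terms cancel coefficientwise: those moving three distinct indices, and those of shape $f(x_{\tau_s^+\tau_t^+\tau_s^+[\mathbf{v}]})$. Every surviving term then has the shape $\Phi_{[\mathbf{v}]_n\setminus\{s\}}(x_s)f(x_{\tau_s^+[\mathbf{v}]})$. Its coefficient is identified by the residue theorem as $2\Res(g,\infty)=-4$, where $g(x)=\Phi_{[\mathbf{v}]_n\setminus\{s\}}(x)\Phi_{[\mathbf{v}]_{n+1}}(-x)\frac{\hbar+2x}{(x+x_s)(x-x_s)}$. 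This is exactly the coefficient that $\sqrt{-1}\cdot 4B_{n,0}$ requires.

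Your fallback of checking $d\le 3$ cannot replace this. Shrinking $n$ is harmless, since the other blocks are spectators. But the identity must hold for arbitrary block sizes $v_n$, $v_{n+1}$, and finitely many small cases do not prove it.

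\textbf{Two smaller points.} First, your partial-fraction identity has a sign error. It should read $\frac{u-v}{(u+a)(v+b)}=\frac{1}{v+b}-\frac{1}{u+a}-\frac{a-b}{(u+a)(v+b)}$.

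Second, your case split for the $bb$ relations covers only $j\neq\tau i,\tau i\pm1$ and $j=\tau i$, so it drops $j=\tau i\pm1$. That range contains:
\begin{itemize}
\item $i=j=n$ (since $n=\tau n-1$) and $i=j=n+1$;
\item for $c_{ij}=0$, the pairs $j=\tau i+1$, where $\mathbf{B}_i$ and $\mathbf{B}_j$ both remove an index from block $i$.
\end{itemize}
These cases are easy. The paper treats every $i=j$ uniformly, because the coefficient of $\mathbf{B}_i$ only sees block $i$. The commutation $[\mathbf{B}_i(u),\mathbf{B}_{\tau i+1}(v)]=0$ is the type-A fact that the operators moving an index out of block $\tau i+1$ to its left and to its right neighbours commute. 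They still need to be included.
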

We will prove this theorem in Section \ref{sec:relations} below by checking that the corresponding operators satisfy the relations in Proposition \ref{prop:Ygen}.
This representation is called the polynomial representation of $\mathbf{Y}^\imath$. With this, we can prove the main result of this paper. 
\begin{thm}\label{thm:main}
    There exists a unique algebra homomorphism $$\Psi:\mathbf{Y}^\imath\rightarrow H_*^{G\times \C^*}(Z)$$ by sending 
    \[b_{i,r}\mapsto \mathscr{B}_{i,r}, \quad h_{i,r}\mapsto \mathscr{H}_{i,r},\textit{ for } 1\le i\le 2n, r\geq 0.\]
\end{thm}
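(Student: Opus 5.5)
The plan is to deduce Theorem \ref{thm:main} from Theorem \ref{thm:polyrep} via the faithfulness in Lemma \ref{lemma:faithful}.

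\emph{Uniqueness.} This is immediate. $\mathbf{Y}^\imath$ is generated by the $h_{i,r}$ and $b_{i,r}$, so any algebra homomorphism is determined by the images of these generators.

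\emph{Existence.} Let $\rho:H_*^{G\times\C^*}(Z)\to \mathrm{End}(\mathbf{P})$ be the convolution action on $H_*^{G\times\C^*}(\mathscr{F})\simeq\mathbf{P}$. This is an algebra homomorphism, since convolution is associative and the module structure is compatible with it; it is also $\C[\hbar]$-linear. By Lemma \ref{lemma:faithful}, $\rho$ is injective.

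Let $\widetilde{\Psi}$ be the algebra homomorphism from the free $\C[\hbar]$-algebra on the symbols $h_{i,r},b_{i,r}$ to $H_*^{G\times\C^*}(Z)$ determined by
\[
h_{i,r}\mapsto \mathscr{H}_{i,r},\qquad b_{i,r}\mapsto\mathscr{B}_{i,r}.
\]
We must show that $\widetilde{\Psi}$ kills every defining relation $R$ of $\mathbf{Y}^\imath$, taken either as in Definition \ref{def:Y} or in the generating-function form of Proposition \ref{prop:Ygen}, read coefficientwise.

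First, Propositions \ref{prop:Hir} and \ref{prop:Bir} give $\rho(\mathscr{H}_{i,r})=H_{i,r}$ and $\rho(\mathscr{B}_{i,r})=B_{i,r}$. Hence $\rho(\widetilde{\Psi}(R))$ equals the same noncommutative polynomial $R$ evaluated at the operators $H_{i,r},B_{i,r}$. By Theorem \ref{thm:polyrep}, this evaluation vanishes. Injectivity of $\rho$ then forces $\widetilde{\Psi}(R)=0$, so $\widetilde{\Psi}$ descends to the required $\Psi$.

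One point needs care here. The operators $H_{i,r}$ and $B_{i,r}$ involve denominators $x_t-x_j$ and the constant $\sqrt{-1}$. They nevertheless preserve $\mathbf{P}$, because they are images of genuine homology classes, so the relations can be checked inside $\mathrm{End}(\mathbf{P})$ with no localization.

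\emph{Where the work lies.} The real content is Theorem \ref{thm:polyrep}, i.e.\ verifying the relations of Proposition \ref{prop:Ygen} for the explicit operators in \eqref{equ:Hivu} and \eqref{equ:Bope}.

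\emph{$hh$ and $hb$ relations.} The $hh$ relations are trivial, since all $H_{i,r}$ are multiplication operators, together with the identity $H_{\tau i,\mathbf{v}}(-u)=H_{i,\mathbf{v}}(u)$. For \eqref{equ:hibj}, note that $B_j(v)$ moves a single index $k$ from block $j$ to $j+1$ and $k'$ from $\tau j+1$ to $\tau j$. Hence $H_i(u)B_j(v)$ equals $B_j(v)H_i(u)$ twisted by a rational factor in $u$ and $x_k$. Checking \eqref{equ:hibj} then reduces to a rational-function identity term by term, with separate care for the $i\in\{n,n+1\}$ correction factor $(1\mp\hbar/4u)$.

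\emph{$bb$ relations.} For \eqref{equ:bibj}, \eqref{equ:bibjcij0} and the Serre relations, I would expand $B_i(u)B_j(v)$ as a double sum over moved indices. The diagonal terms, in which the same index is moved twice (this only occurs when $j=\tau i$), produce the $h^\circ$ terms via a partial-fraction/residue computation of the kind used in Lemma \ref{lem:Ea1}.

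\emph{Using the involution.} The anti-involution $\varphi$ of Lemma \ref{lem:inv} exchanges $\mathscr{B}_{i}$ and $\mathscr{B}_{\tau i}$ up to sign. Applying it to an already verified relation yields the transposed one, which roughly halves the cases.

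\emph{Main obstacle.} I expect the hardest step to be the relations involving the middle nodes $i=n,n+1$: \eqref{equ:bibj} with $j=\tau i$, and the Serre relation \eqref{equ:fserre}. There both moves act on the self-dual middle block $[\mathbf{v}]_{n+1}$, and the constant $4b_{i,0}$ has to emerge from residue sums at $\infty$.
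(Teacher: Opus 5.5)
Your proposal is correct and is the paper's argument: the paper derives Theorem~\ref{thm:main} in one line from Theorem~\ref{thm:polyrep}, Lemma~\ref{lemma:faithful} and Propositions~\ref{prop:Hir} and~\ref{prop:Bir}, and your sketch of the relation checks matches the structure of Section~\ref{sec:relations}, including the use of the anti-involution of Lemma~\ref{lem:inv} to halve the cases and the partial-fraction identities of Lemma~\ref{lem:Hexp} for the $h^\circ$ terms. One small imprecision: an index is also moved twice when $j=i\pm1$ (from block $i$ to block $i+2$), but those terms cancel, and only for $j=\tau i$ do the double moves return to the original partition $[\mathbf{v}]$ and produce the $h^\circ$ terms.
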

\begin{proof}
    It follows directly from Theorem \ref{thm:polyrep}, Lemma \ref{lemma:faithful}, and the explicit formulae in Propositions \ref{prop:Hir} and \ref{prop:Bir}.
\end{proof}

\section{Representations of twisted Yangian}\label{sec:rep}

In this section, we prove the surjectivity of the homomorphism $\Psi$ in Theorem \ref{thm:main} after certain specializations. Then via the general construction of representations of convolution algebras in \cite{CG97}, we get some representations of the twisted Yangian $\mathbf{Y}^\imath$ from the geometry of the cotangent bundle $\mathscr{M}$ of the partial flag varieties.

\subsection{Surjectivity of the homomorphism $\Psi_a$.}

The equivariant Borel--Moore homology group $H^{G\times\mathbb{C}^*}_*(Z)$ is a module over the base ring $H^{G\times\mathbb{C}^*}_*(\pt)\simeq\mathbf{R}^W[\hbar]$, whose characters correspond to semisimple conjugacy classes in $\mathfrak{g}\times\mathbb{C}$. 
Let $a:=(s,t)\in\mathfrak{g}\times\mathbb{C}$ be a semisimple element, let $\mathbb{C}_a$ be the one-dimensional module of $H_*^{G\times \mathbb{C}^*}(\mathrm{pt})\simeq \mathbb{C}[\mathfrak{g}]^G[\hbar]$ by evaluation at $a$, and let
$$H^{G\times\mathbb{C}^*}_*(Z)_a:=H^{G\times\mathbb{C}^*}_*(Z)\otimes_{H^{G\times\mathbb{C}^*}_*(\mathrm{pt})}\mathbb{C}_a.$$ 
The specialization of $\Psi$ in Theorem \ref{thm:main} at $\hbar=t$ gives an algebra homomorphism
\begin{equation}\label{surj mor}
    \Psi_a: \mathbf{Y}^\imath_t\to H_*^{G\times\mathbb{C}^*}(Z)_a.
\end{equation}
\begin{thm}\label{surj}
    Suppose that $t\neq0$. Then the homomorphism $\Psi_a$ in \eqref{surj mor} is surjective.
\end{thm}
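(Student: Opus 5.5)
By Theorem \ref{thm:generators}, the specialized algebra $H_*^{G\times\C^*}(Z)_a$ is generated by the images of $H_*^{G\times\C^*}(Z_{\diag(\mathbf{v})})$ and of $H_*^{G\times\C^*}(Z_{E^\theta_{i,i+1}(\mathbf{v}',1)})$. Specialization is right exact, so these images still generate. It therefore suffices to show that each of these pieces, after specialization, lies in the image of $\Psi_a$. The plan has three steps, corresponding to three families of generators.

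\textbf{Step 1 (diagonal pieces).} We first show that the image of $\Psi_a$ contains every class $f\cdot[T^*\mathscr{F}_\mathbf{v}]$ with $f\in \mathbf{R}^{[\mathbf{v}]^\mathfrak{c}}$. We begin with the generating series $\mathbf{H}_i(u)$ of Proposition \ref{prop:Hir}, which act diagonally on $\mathbf{P}$ by multiplication by $H_{i,\mathbf{v}}(u)$. Taking logarithms and expanding in $u^{-1}$, we see that the $\mathscr{H}_{i,r}$ generate the same subalgebra as the elements
\[
p_k(\mathbf{v},i)=\sum_{s\in[\mathbf{v}]_i}\big(\text{suitable shift of }x_s\big)^k\cdot(\text{constant in }\hbar),
\]
up to triangular corrections involving lower power sums. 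The leading coefficient of $p_k$ is a nonzero multiple of $t$ (power sums of the Chern roots of $\mathcal{V}_i/\mathcal{V}_{i-1}$, combined with their $\tau$-mirrors). Here $t\neq0$ is used: the factor $\hbar$ in $\varphi$ becomes invertible. By triangularity in $k$ we then obtain all odd, respectively suitable, power sums of the variables in blocks $i$ and $\tau i$. Since $x_{r'}=-x_r$, these generate $\mathbf{R}^{[\mathbf{v}]^\mathfrak{c}}$ as an algebra over $\mathbf{R}^W$ for the blocks $1,\dots,n$ (even power sums for the middle block come from $\mathbf{R}^W$ together with the others). This yields every $f$ times the sum over all $\mathbf{v}$.

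\textbf{Step 2 (separating the weights).} To obtain $f$ supported on a single $\mathbf{v}$, we use the idempotents $1_\mathbf{v}$. We plan to produce them from the $h_{i,0}$-eigenvalues: the coefficient $H_{i,0}$ equals a linear function of $v_i-v_{\tau i}$, or of $\mathbf{v}$ with the $\delta_{i,n}$ correction. These are polynomials in the commuting operators that separate the finitely many $\mathbf{v}\in\Lambda_{\mathfrak{c},d}$, so Lagrange interpolation gives each $1_\mathbf{v}$. Step~2 is the step where I expect the main obstacle. One must check that the eigenvalues of $H_{i,0}$, $1\le i\le n$, really distinguish all compositions; the symmetry $v_i=v_{N+1-i}$ should make the $n$ values suffice. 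If $H_{i,0}$ alone fails to do so, I would instead use that $\mathbf{v}$ is determined by the ranks $\bar v_i$, and these are detected by higher $h_{i,r}$ after specialization.

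\textbf{Step 3 (off-diagonal pieces).} For the $E^\theta_{i,i+1}(\mathbf{v}',1)$ pieces, $H_*(Z_{E^\theta_{i,i+1}(\mathbf{v}'',1)})_a$ is a module over $\mathbf{R}^{[E]^\mathfrak{c}}$. Since the orbit is closed, its class is pure. By Proposition \ref{prop:pushforward}, $\mathbf{R}^{[E]^\mathfrak{c}}$ is generated over $\mathbf{R}^{[\mathbf{v}]^\mathfrak{c}}$ by $x_{\bar v_i}=c_1(\mathcal{L}_{\mathbf{v},i})$. Any class is therefore a sum $g\cdot c_1(\mathcal{L})^r[Z_E]$, which equals $1_\mathbf{v}\,g\star \mathscr{B}_{i,\mathbf{v},r}$ up to a nonzero scalar and lower powers of the shift. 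This holds because convolving with a diagonal class multiplies by $p_1^*g$. By Steps 1--2 this expression lies in the image of $\Psi_a$, which completes the proof.
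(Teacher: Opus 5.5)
Your plan is correct and follows the same skeleton as the paper. It reduces via Theorem \ref{thm:generators}, obtains the projectors $1_{\mathbf{v}}$ by interpolation in the $h_{i,0}$ (Lemma \ref{ideom}), generates each diagonal piece from the logarithmic coefficients of $h_i(u)$ together with $\mathbf{R}^W$ (Lemma \ref{gene H}), and then treats the closed off-diagonal pieces.

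The step you flagged as the main obstacle is harmless. Since $v_{\tau i}=v_{i+1}$, $\mathscr{H}_{i,0}$ acts on the $\mathbf{v}$-summand by a scalar determined by $v_i-v_{i+1}$. These $n$ differences, together with $2\bar v_n+v_{n+1}=2d$, determine $\mathbf{v}$.

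Step 1 is where care is actually needed, in two respects:
\begin{itemize}
\item The argument must be run summand by summand after multiplying by $1_{\mathbf{v}}$, so Step 2 has to come first. As written, your sentence about ``every $f$ times the sum over all $\mathbf{v}$'' does not make sense, since the rings $\mathbf{R}^{[\mathbf{v}]^{\mathfrak{c}}}$ differ from one $\mathbf{v}$ to another.
\item $\mathbf{H}_i(u)$ does not isolate block $i$. The image of $[\mathbf{v}]_{\tau i}$ under $r\mapsto r'$ is $[\mathbf{v}]_{i+1}$, not $[\mathbf{v}]_i$. So the leading terms of the logarithmic coefficients are $\pm\bigl(\sum_{j\in[\mathbf{v}]_i}x_j^k-\sum_{j\in[\mathbf{v}]_{i+1}}x_j^k\bigr)$, and for $i=n$ the middle block contributes $\sum_{j\in[\mathbf{v}]^{\mathfrak{c}}_{n+1}}(x_j^k+(-x_j)^k)$. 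Odd degrees then follow by telescoping from $i=n$. In even degrees you must invert the $(n+1)\times(n+1)$ system that also includes $\sum_{j=1}^d x_j^k\in\mathbf{R}^W$. Its determinant is $\pm(2n+1)\neq 0$, and this is precisely the nontrivial content of Lemma \ref{gene H}.
\end{itemize}

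Your Step 3 is a genuine, if small, variation on Lemma \ref{gene B}. You use all the $\mathscr{B}_{i,\mathbf{v},r}$, which by definition are nonzero multiples of $\pi^*\bigl(c_1(\mathcal{L}_{\mathbf{v},i})+\tfrac{n-i}{2}\hbar+\tfrac{\hbar}{4}\bigr)^r[Z_E]$ with $E=E^\theta_{i,i+1}(\mathbf{v}'',1)$. You then act on the left by diagonal classes of the row weight. The paper uses only $\mathscr{B}_{i,\mathbf{v},0}$. It convolves on the left with row-weight diagonal classes and on the right with column-weight diagonal classes, so that the difference of two block power sums isolates the powers of the split-off variable $c_1(\mathcal{L}_{\mathbf{v},i})$ times $[Z_E]$. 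Both arguments rest on the same fact: $\mathbf{R}^{[E]^{\mathfrak{c}}}$ is generated over the row-weight invariants by that single variable. Yours is more direct. The paper's proves the sharper statement that $b_{i,0}$ together with the diagonal part already produces all off-diagonal generators.
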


The remainder of this subsection is devoted to the proof of Theorem \ref{surj}. We will assume $t\neq 0$ from now on. To that end, we consider the specialization of $H^{G\times\mathbb{C}^*}_*(Z)$ (and its localization) at $\hbar=t\neq0$, denoted by $H^{G\times\mathbb{C}^*}_*(Z)_t$, and the specialized morphism $\Psi_t$.

For any $\mathbf{v}\in\Lambda_{\mathfrak{c},d}$, let
$$\mathbf{I}_\mathbf{v}:=\bigg(\prod\limits_{i=1}^{n-1}\prod\limits_{\substack{m=-d\\m\neq v_{i}-v_{i+1}}}^{d}\frac{h_{i,0}-m\hbar}{(v_{i}-v_{i+1}-m)\hbar}\bigg)\cdot\prod\limits_{\substack{m=-d\\m\neq v_{n}-v_{n+1}}}^{d}\frac{h_{n,0}-(m-\frac{\hbar}{4})\hbar}{(v_{n}-v_{n+1}-m)\hbar}\in\mathbf{Y}^\imath_t.$$
\begin{lem}\label{ideom}
    For any $\mathbf{u},\mathbf{v},\mathbf{w}\in\Lambda_{\mathfrak{c},d}$, $\Psi_t(\mathbf{I}_\mathbf{v})\star H^{G\times\mathbb{C}^*}_*(Z_{\mathbf{uw}})\neq0$ if and only if $\mathbf{v}=\mathbf{u}$.
\end{lem}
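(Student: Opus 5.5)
The plan is to reduce everything to the action of $\Psi_t(h_{i,0})$, which lives on the diagonal. By Theorem \ref{thm:main}, $\Psi_t(h_{i,0})=\mathscr{H}_{i,0}=\sum_{\mathbf{v}}\mathscr{H}_{i,\mathbf{v},0}\cdot[Z_{\diag(\mathbf{v})}]$. Since $[Z_{\diag(\mathbf{v})}]$ is the unit on the $\mathbf{v}$-block, convolving on the left with a class supported on $Z_{\diag(\mathbf{v})}$ kills $H_*^{G\times\C^*}(Z_{\mathbf{u}\mathbf{w}})$ unless $\mathbf{v}=\mathbf{u}$. In that case it acts as multiplication by the pullback of $\mathscr{H}_{i,\mathbf{u},0}$ along the first factor.

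\textbf{Step 1: compute $\mathscr{H}_{i,\mathbf{u},0}$.} By Proposition \ref{prop:Hir}, this class is the coefficient of $\hbar u^{-1}$ in $H_{i,\mathbf{u}}(u)$ from \eqref{equ:Hivu}. Expanding in $u^{-1}$:
\begin{itemize}
\item $\Phi_{[\mathbf{u}]_i}(-u-c)=1+\hbar u_i u^{-1}+O(u^{-2})$;
\item $\Phi_{[\mathbf{u}]_{\tau i}}(u+c)=1-\hbar u_{\tau i}u^{-1}+O(u^{-2})$;
\item the prefactor contributes $-\tfrac{\delta_{i,n}-\delta_{i,n+1}}{4}\hbar u^{-1}$.
\end{itemize}
Using $u_{\tau i}=u_{N+1-\tau i}=u_{i+1}$, the class $\mathscr{H}_{i,\mathbf{u},0}$ is the constant $u_i-u_{i+1}-\tfrac{\delta_{i,n}}{4}$ for $1\le i\le n$. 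So $\Psi_t(h_{i,0})$ acts on $H_*^{G\times\C^*}(Z_{\mathbf{u}\mathbf{w}})$ by a scalar $c_i(\mathbf{u})$ depending only on $u_i-u_{i+1}$, with the extra $\tfrac14$-shift at $i=n$. I would then carefully match the normalization (the factors of $\hbar=t$ and the shift $\tfrac{\hbar}{4}$) with the factors appearing in $\mathbf{I}_{\mathbf{v}}$.

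\textbf{Step 2: evaluate $\mathbf{I}_{\mathbf{v}}$.} Consequently $\Psi_t(\mathbf{I}_{\mathbf{v}})$ acts on $H_*^{G\times\C^*}(Z_{\mathbf{u}\mathbf{w}})$ by the scalar
\[
\prod_{i=1}^{n}\ \prod_{m\neq v_i-v_{i+1}}\frac{(u_i-u_{i+1})-m}{(v_i-v_{i+1})-m},
\]
up to the matched normalization.
\begin{itemize}
\item If $\mathbf{u}=\mathbf{v}$, every factor equals $1$. Since $H_*^{G\times\C^*}(Z_{\mathbf{v}\mathbf{w}})_t\neq 0$ (it contains, e.g., the fundamental classes of the $Z_A$, which survive by the cellular filtration \eqref{cfb} and freeness), the product is nonzero.
\item If $\mathbf{u}\neq\mathbf{v}$, I claim some $i\le n$ has $u_i-u_{i+1}\neq v_i-v_{i+1}$. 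The differences $\delta_k=u_k-u_{k+1}$, $k\le n$, give $u_i=u_{n+1}+\sum_{k=i}^n\delta_k$. The constraint $2\sum_{i\le n}u_i+u_{n+1}=2d$ then becomes $(2n+1)u_{n+1}+(\text{known})=2d$, which determines $u_{n+1}$ and hence $\mathbf{u}$.
\item For such an $i$, the value $m=u_i-u_{i+1}$ appears among the factors, provided it lies in the range of $m$. That factor vanishes, so the product is $0$.
\end{itemize}

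\textbf{Main obstacle.} The delicate point is bookkeeping, not conceptual: pinning down exactly how $\hbar$ and the $\tfrac14$-shift enter the scalar for $h_{n,0}$, so that the numerators in $\mathbf{I}_{\mathbf{v}}$ vanish precisely at the shifted eigenvalue. One must also check that the product range for $m$ contains every possible difference $u_i-u_{i+1}$. For $i=n$ this difference can be as low as about $-2d$, so I would verify the range, widening it to $[-2d,2d]$ if necessary, which does not affect the argument.
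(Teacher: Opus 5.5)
Your proposal follows the paper's own argument: compute the scalar by which $\mathscr{H}_{i,0}$ acts on the $\mathbf{u}$-block, substitute it into $\mathbf{I}_{\mathbf{v}}$, and recover $\mathbf{u}$ from the differences $u_i-u_{i+1}$ using $\sum_{i\le n}u_i+\tfrac{u_{n+1}}{2}=d$. Both caveats you raise are genuine and the paper's proof passes over them: by the definition of $\mathscr{H}_{i,\mathbf{v},r}$ as the coefficient of $\hbar u^{-r-1}$, the eigenvalue is $u_i-u_{i+1}-\tfrac{\delta_{i,n}}{4}$ with no extra factor $\hbar$, so the normalization of $\mathbf{I}_{\mathbf{v}}$ must be adjusted to match; and since $u_n-u_{n+1}$ can equal $-2d$, the range $m\in[-d,d]$ at $i=n$ really fails (for $n=d=1$, $\mathbf{v}=(1,0,1)$, $\mathbf{u}=(0,2,0)$ the scalar is $\tfrac{-1}{2}\cdot\tfrac{-2}{1}=1$), so widening the range is necessary, and with that change your argument is complete.
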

\begin{proof}
    Note that, for $1\le i\le n$, $\mathscr{H}_{i,0}$ acts on $H^{G\times\mathbb{C}^*}_*(Z_{\mathbf{uw}})$ as scalar multiplication by $(u_{i}-u_{i+1}-\frac{1}{4}\delta_{i,n})\hbar$. Thus, for any $\gamma\in H^{G\times\mathbb{C}^*}_*(Z_{\mathbf{uv}})_t$,
    \begin{equation}\label{eq:1}
\Psi_t(\mathbf{I}_\mathbf{v})\star\gamma=\bigg(\prod\limits_{i=1}^{n}\prod\limits_{\substack{m=-d\\m\neq v_{i}-v_{i+1}}}^{d}\frac{u_{i}-u_{i+1}-m}{v_{i}-v_{i+1}-m}\bigg)\cdot\gamma.
    \end{equation}
    Hence, if $\mathbf{v}=\mathbf{u}$, the RHS of \eqref{eq:1} equals $\gamma$. If $\Psi_t(\mathbf{I}_{\mathbf{v}})\star\gamma\neq0$, we must have $u_{i}-u_{i+1}=v_{i}-v_{i+1}$, for $1\le i\le n$. Since $\sum_{i=1}^{n}u_i+\frac{u_{n+1}}{2}=\sum_{i=1}^{n}v_i+\frac{v_{n+1}}{2}=d$, we get $\mathbf{u}=\mathbf{v}$.
\end{proof}
Therefore, the classes $\mathscr{B}_{i,\mathbf{v},r}$ and $\mathscr{H}_{i,\mathbf{v},r}$ defined in Section \ref{sec:operators} belong to the image of $\Psi_t$. 
\begin{lem}\label{gene H}
    Fix $\mathbf{v}\in\Lambda_{\mathfrak{c},d}$ and $t\neq0$. Then the elements $\mathscr{H}_{i,\mathbf{v},k}\cdot[Z_{\diag(\mathbf{v})}]$ and $\sum_{j=1}^dx_j^{2k}$, for $1\le i\le n$ and $k\ge 1$, generate the algebra $H^{G\times\mathbb{C}^*}_*(Z_{\diag(\mathbf{v})})_t\simeq\mathbf{R}^{[\mathbf{v}]^\mathfrak{c}}$. 
\end{lem}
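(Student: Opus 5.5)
The approach is to compute the classes $\mathscr{H}_{i,\mathbf{v},k}$ explicitly as elements of $\mathbf{R}^{[\mathbf{v}]^\mathfrak{c}}[\hbar]$ and then reduce the claim to a statement about power sums. By Proposition \ref{prop:Hir}, the class $\mathscr{H}_{i,\mathbf{v},k}\cdot[Z_{\diag(\mathbf{v})}]$ corresponds to the coefficient of $\hbar u^{-k-1}$ in $H_{i,\mathbf{v}}(u)$. The ring $\mathbf{R}^{[\mathbf{v}]^\mathfrak{c}}$ is
\[\mathbb{C}[x_{[\mathbf{v}]_1}]^{S}\otimes\cdots\otimes\mathbb{C}[x_{[\mathbf{v}]_n}]^{S}\otimes\mathbb{C}[x_{[\mathbf{v}]^\mathfrak{c}_{n+1}}]^{\mathbb{Z}_2\rtimes S},\]
where each $S$ is the symmetric group on the corresponding block. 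It is therefore generated by the power sums $p_{i,m}:=\sum_{j\in[\mathbf{v}]_i}x_j^m$ for $1\le i\le n$ and $m\ge1$, together with the even power sums of the middle block. The whole proof amounts to recovering all of these from the given elements.

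First, I expand $\log H_{i,\mathbf{v}}(u)$ in $u^{-1}$. Using $x_{r'}=-x_r$, I rewrite $\Phi_{[\mathbf{v}]_{\tau i}}$ in terms of the variables in $[\mathbf{v}]_{i+1}$; this is the same manipulation as in the proof of Proposition \ref{prop:Hir}. Each factor then contributes
\[\log\frac{1+u^{-1}(x+c+\hbar)}{1+u^{-1}(x+c)}=\sum_{m\ge1}\frac{(-1)^{m-1}}{m}u^{-m}\big((x+c+\hbar)^m-(x+c)^m\big),\]
and the leading term in $x$ of the bracket is $m\hbar x^{m-1}$. So the coefficient of $u^{-m}$ in $\log H_{i,\mathbf{v}}$ equals $(-1)^{m-1}\hbar\,(p_{i,m-1}-p_{i+1,m-1})$ plus a polynomial in lower power sums of blocks $i$ and $i+1$ and $\hbar$ (for $i=n$, the block $i+1$ is the middle block $[\mathbf{v}]_{n+1}$, whose full index set is closed under $r\mapsto r'$). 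The coefficients of $\log H$ and of $H$ determine each other polynomially and triangularly. Since $t\ne0$, I may divide by $\hbar$, and induction on $m$ shows that the algebra generated by the $\mathscr{H}_{i,\mathbf{v},k}$ contains $p_{i,m}-p_{i+1,m}$ for all $1\le i\le n$ and all $m\ge0$, modulo what has already been generated.

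Next I treat the middle block. Its full index set is symmetric under $r\mapsto r'$, so its odd power sums vanish and its even power sums are twice the even power sums of $[\mathbf{v}]^\mathfrak{c}_{n+1}$. The global invariants satisfy $\sum_{j=1}^d x_j^{2k}=\sum_{i\le n}p_{i,2k}+\tfrac12 p_{n+1,2k}$. Combining this with the differences $p_{i,2k}-p_{i+1,2k}$, which form a telescoping, invertible triangular system, I solve for each $p_{i,2k}$, including the middle one. For odd degrees the middle power sum is zero, so $p_{n,2k+1}$ is obtained directly and then $p_{n-1,2k+1},\dots,p_{1,2k+1}$ follow by telescoping. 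Because the lower-order corrections involve only lower power sums, an induction on degree finishes the argument, with degree $0$ trivial.

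I expect the main obstacle to be the bookkeeping in the $\log$ expansion at the boundary indices, that is, the term $\delta_{i,n}$ and the twisting for the middle block. The task there is to check that the leading coefficient is exactly a nonzero multiple of $\hbar(p_{i,m-1}-p_{i+1,m-1})$ with no cancellation. The extra factor $(1-\frac{\hbar}{4u})$ contributes only constants, so it does no harm.
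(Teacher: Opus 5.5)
Your proposal is correct and is essentially the paper's argument. The paper also passes to logarithmic coefficients, defining $\hat{\mathscr{H}}_{i,r}$ through $\mathscr{H}_i(z)=\exp\bigl(\hbar\sum_{r}\hat{\mathscr{H}}_{i,r}(-z-\frac{n-i}{2}\hbar-\frac{\hbar}{4})^{-r-1}\bigr)$, which is just a shifted form of your $\log$ expansion. It then finds the same leading terms $-p_{i,k}+p_{i+1,k}$, with the middle block contributing $x_j^k+(-x_j)^k$. Finally it recovers the odd power sums by telescoping and the even ones by inverting the linear system that includes $\sum_{j=1}^d x_j^{k}$; the paper's determinant $(-1)^n(2n+1)$ corresponds to your coefficient $n+\tfrac12$.
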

\begin{proof}
    Define classes $\hat{\mathscr{H}}_{i,r}$ by the following identity: $$\mathscr{H}_i(z)=1+\hbar\sum\limits_{r\ge0}\mathscr{H}_{i,r}z^{-r-1}=\mathrm{exp}\bigg(\hbar\sum\limits_{r\ge0
    }\hat{\mathscr{H}}_{i,r}(-z-\frac{n-i}{2}\hbar-\frac{\hbar}{4})^{-r-1}\bigg).$$
    Therefore, $\hat{\mathscr{H}}_{i,r}$ is a polynomial in $\{\mathscr{H}_{i,k}\}$.
    Recall that, for $1\le i\le n$, the class $\mathscr{H}_{i}(z)$ acts on $\mathbf{R}^{[\mathbf{v}]^\mathfrak{c}}[\hbar]$ as the scalar multiplication by $$(1-\frac{\delta_{i,n}}{4z}\hbar)\cdot \Phi_{[\mathbf{v}]_{i}}(-z-\frac{n-i}{2}\hbar-\frac{\hbar}{4})\cdot \Phi_{[\mathbf{v}]_{\tau i}}(z+\frac{n-i}{2}\hbar+\frac{\hbar}{4}).$$ A direct computation shows that the class $\hat{\mathscr{H}}_{i,k}$ acts on $H^{G\times\mathbb{C}^*}_*(Z_{\diag(\mathbf{v})})_t\simeq\mathbf{R}^{[\mathbf{v}]^\mathfrak{c}}$ by the following scalar multiplications $$\hat{\mathscr{H}}_{i,\mathbf{v},k}=
\begin{cases}
   -\sum\limits_{j\in[\mathbf{v}]_i}x_j^k+\sum\limits_{j\in[\mathbf{v}]_{i+1}}x_j^k+\text{lower terms},  \quad &\text{for}\ 1\le i\le n-1;\\
    -\sum\limits_{j\in[\mathbf{v}]_n}x_j^k+\sum\limits_{j\in[\mathbf{v}]^{\mathfrak{c}}_{n+1}}(x_j^k+(-x_j)^k)+\text{lower terms}, &\text{for} \  i=n.
\end{cases}$$
Let $\mathbf{R}^{[\mathbf{v}]^\mathfrak{c}}_{\le k}:=\{f\in\mathbf{R}^{[\mathbf{v}]^\mathfrak{c}}\mid \deg(f)\le k\}$. We claim that $$\mathfrak{U}_k=\bigg\{\hat{\mathscr{H}}_{i,\mathbf{v},r}\mid 1\le r\le k,\ 1\le i\le n \bigg\}\cup\bigg\{\sum_{j=1}^{d}x^{2l}_{j}\mid l\in\mathbb{N},\ 1\le l\le[\frac{k}{2}]\bigg\}$$ generates the vector space $\mathbf{R}^{[\mathbf{v}]^\mathfrak{c}}_{\le k}$. 
We prove this claim by induction on $k$. When $k=0$, there is nothing to prove. Suppose it holds for $k-1$. Since $\mathbf{R}^{[\mathbf{v}]^\mathfrak{c}}_{\le k}$ is generated by $$\bigg\{\sum_{j\in[\mathbf{v}]_i}x_j^s\mid 1\le i\le n,\ 1\le s\le k\bigg\}\cup\bigg\{\sum_{j\in[\mathbf{v}]^\mathfrak{c}_{n+1}}x_j^{2l}\mid 1\le l\le[\frac{k}{2}]\bigg\},$$ it remains to show that $\mathfrak{U}_k$ generates $$\bigg\{\sum_{j\in[\mathbf{v}]_i}x_j^k\mid 1\le i\le n\bigg\}\cup\bigg\{\frac{1+(-1)^k}{2}\sum_{j\in[\mathbf{v}]^\mathfrak{c}_{n+1}}x_j^{k}\bigg\}.$$  
Let $$\hat{\mathscr{H}}'_{i,\mathbf{v},k}=
\begin{cases}
   -\sum\limits_{j\in[\mathbf{v}]_i}x_j^k+\sum\limits_{j\in[\mathbf{v}]_{i+1}}x_j^k,  \quad &\text{for}\ 1\le i\le n-1;\\
    -\sum\limits_{j\in[\mathbf{v}]_n}x_j^k+\sum\limits_{j\in[\mathbf{v}]^{\mathfrak{c}}_{n+1}}(x_j^k+(-x_j)^k), &\text{for} \  i=n,
\end{cases}$$ which is the leading term of $\hat{\mathscr{H}}_{i,\mathbf{v},k}$.
Since $\mathfrak{U}_{k-1}\subset\mathfrak{U}_k$, by induction, $\hat{\mathscr{H}}'_{i,\mathbf{v},k}$ can be generated by $\mathfrak{U}_k$.
If $k$ is odd, then for $1\leq i\leq n$, $\sum\limits_{j\in[\mathbf{v}]_i}x^k_j=-\sum\limits_{s=i}^{n}\hat{\mathscr{H}}'_{s,\mathbf{v},k}$. If $k$ is even, then the column vector $$(\hat{\mathscr{H}}'_{1,\mathbf{v},k},\cdots,\hat{\mathscr{H}}'_{n,\mathbf{v},k},\sum\limits_{j=1}^dx_j^k)^T$$ is related to the vector $$(\sum\limits_{j\in[\mathbf{v}]_i}x^k_j,\cdots,\sum\limits_{j\in[\mathbf{v}]_{n}}x_j^k,\sum\limits_{j\in[\mathbf{v}]^\mathfrak{c}_{n+1}}x^k_j)^T$$
by the matrix $$A=\begin{pmatrix}
    -1&1&&&&&\\
    &-1&1&&&&\\
    &&-1&1&&&\\
    &&&\ddots&\ddots&&\\
    &&&&-1&1&\\
    &&&&&-1&2\\
    1&1&1&\cdots&1&1&1
\end{pmatrix}$$
Since $\mathrm{det}(A)=(-1)^{n}(2n+1)\neq0$, this completes the proof of the claim and hence the lemma. 
\end{proof}

\begin{lem}\label{gene B}
    Let $\mathbf{v}\in\Lambda_{\mathfrak{c},d-1}$.
    For $1\le i\le 2n$, $H^{G\times\mathbb{C}^*}_*(Z_{E_{i,i+1}^{\theta}(\mathbf{v},1)})$ is contained in the algebra generated by $\mathscr{B}_{i,\mathbf{v},0}$, and the classes supported on the diagonal orbits. 
\end{lem}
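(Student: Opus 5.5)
The plan is to show that every class in $H^{G\times\mathbb{C}^*}_*(Z_{E_{i,i+1}^{\theta}(\mathbf{v},1)})$ has the form $f\star\mathscr{B}_{i,\mathbf{v}+\mathbf{e}_i+\mathbf{e}_{\tau i+1},0}\star g$, with $f,g$ classes supported on the diagonal orbits $Z_{\diag(\mathbf{u})}$. Here $\mathscr{B}_{i,\mathbf{v},0}$ in the statement is read as the class attached to the orbit $\mathcal{O}_{E^\theta_{i,i+1}(\mathbf{v},1)}$. Write $A=E_{i,i+1}^{\theta}(\mathbf{v},1)$. This orbit is closed, being minimal for $\preceq$, so $Z_A$ is closed in $Z$. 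Hence, by the filtration argument of the previous subsection, its homology embeds into $H^{G\times\mathbb{C}^*}_*(Z)$ with no lower-order pieces, and it is isomorphic via Thom and Proposition \ref{prop:pushforward} to $\mathbf{R}^{[A]^\mathfrak{c}}[\hbar]$.

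The first step is the following convolution formula. For $f\in H^{G\times\mathbb{C}^*}_*(Z_{\diag(\ro A)})\simeq\mathbf{R}^{[\ro A]^\mathfrak{c}}[\hbar]$, $g\in\mathbf{R}^{[\co A]^\mathfrak{c}}[\hbar]$ and $\gamma\in\mathbf{R}^{[A]^\mathfrak{c}}[\hbar]$, we want
\[f\star\gamma\star g=p_1^*f\cdot p_2^*g\cdot\gamma .\]
The reason is that convolution with a diagonal class is the cap product with the pulled-back cohomology class, exactly as in the proof of Proposition \ref{prop:Hir}. In the algebraic model, the pullbacks $p_1^*,p_2^*$ are just the inclusions $\mathbf{R}^{[\ro A]^\mathfrak{c}}\subset\mathbf{R}^{[A]^\mathfrak{c}}$ and $\mathbf{R}^{[\co A]^\mathfrak{c}}\subset\mathbf{R}^{[A]^\mathfrak{c}}$. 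Up to the unit $\sqrt{-1}(-1)^{v_i}$, $\mathscr{B}_{i,\mathbf{v},0}$ is the fundamental class $1\in\mathbf{R}^{[A]^\mathfrak{c}}$. It follows that the subalgebra in question contains every sum $\sum_k f_k g_k\in\mathbf{R}^{[A]^\mathfrak{c}}[\hbar]$.

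It therefore remains to show that $\mathbf{R}^{[A]^\mathfrak{c}}$ is generated as a ring, in fact spanned as products $fg$, by $\mathbf{R}^{[\ro A]^\mathfrak{c}}$ and $\mathbf{R}^{[\co A]^\mathfrak{c}}$. This is the main step. I would describe the groups explicitly. For $i\neq n,n+1$, the group $W_{[A]^\mathfrak{c}}$ is $W_{[\ro A]^\mathfrak{c}}$ with the block $[\ro A]_i$ broken as $S_{m-1}\times S_1$, where $m=v_i+1$ and the singleton is the index $\bar v_i$. The group $W_{[\co A]^\mathfrak{c}}$ instead merges that singleton into block $i+1$. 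In the $\mathbf{R}^{[A]^\mathfrak{c}}$ variables, $\mathbf{R}^{[A]^\mathfrak{c}}$ is generated by $\mathbf{R}^{[\ro A]^\mathfrak{c}}$ together with the variable $x_{\bar v_i}$. I would then exhibit $x_{\bar v_i}$ as a difference. Take the power sum $\sum_{j\in[\ro A]_i}x_j\in\mathbf{R}^{[\ro A]^\mathfrak{c}}$ and subtract $\sum_{j\in[\ro A]_i\setminus\{\bar v_i\}}x_j=\sum_{j\in[\co A]_i}x_j\in\mathbf{R}^{[\co A]^\mathfrak{c}}$. This is a difference of elements of the two subrings, so $x_{\bar v_i}$ lies in the generated ring. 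The general claim then follows because $\mathbf{R}^{[A]^\mathfrak{c}}$ is generated over $\mathbf{R}^{[\ro A]^\mathfrak{c}}$ by powers of $x_{\bar v_i}$, and by iterating this argument the span of the products $fg$ is all of $\mathbf{R}^{[A]^\mathfrak{c}}$. Since sums of $f\star\mathscr{B}\star g$ lie in the algebra, closure under sums suffices.

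The cases $i=n$ and $i=n+1$ need more care, because one of the two flags carries the $\mathbb{Z}_2$ part acting on the middle block. Via the anti-involution $\varphi$ of Lemma \ref{lem:inv}, the case $i=n+1$ is equivalent to $i=n$, up to the sign and the swap of $\ro$ and $\co$. For $i=n$, the side with the larger isotropic block carries the hyperoctahedral symmetry $\mathbb{Z}_2^{k}\rtimes S_k$ on $[\,\cdot\,]^\mathfrak{c}_{n+1}$, while $W_{[A]^\mathfrak{c}}$ splits off a single $S_1$ with no sign, namely the variable $x_{\bar v_n}$ (respectively $x_{\bar v_n+1}$). The other side contains $x_{\bar v_n}$ inside a type-A block $[\,\cdot\,]_n$. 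The same trick applies: subtract the two linear power sums over the type-A blocks $[\ro A]_n$ and $[\co A]_n$, which differ exactly by the special index. This produces $x_{\bar v_n}$ with no parity restriction. Once this variable is available, $\mathbf{R}^{[A]^\mathfrak{c}}$ is generated by it together with the invariants of the larger-symmetry side. The main obstacle I expect is tracking the indices correctly in this $i=n$ or $n+1$ case, so that the extracted variable is genuinely linear and not only available through even powers.
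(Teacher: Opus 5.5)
Your proposal is correct and is essentially the paper's argument. You identify $H^{G\times\mathbb{C}^*}_*(Z_A)$ with $\mathbf{R}^{[A]^\mathfrak{c}}[\hbar]$, and convolving $\mathscr{B}$ with diagonal classes on either side realizes $\mathbf{R}^{[\ro A]^\mathfrak{c}}$ and $\mathbf{R}^{[\co A]^\mathfrak{c}}$ inside it. The singleton variable then comes out as a difference of power sums over the two type-A blocks, and together with $\mathbf{R}^{[\ro A]^\mathfrak{c}}$ it generates $\mathbf{R}^{[A]^\mathfrak{c}}$. The differences are cosmetic: the paper subtracts $k$-th power sums to get each $x^k$ directly, where you expand $(p-q)^k$ into products $fg$, and it calls the cases $i>n$ ``similar'' where you use the anti-involution $\varphi$ of Lemma~\ref{lem:inv}.
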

\begin{proof}
It suffices to show the statement for $1\le i\le n$, as the other cases can be proved similarly. First of all, the orbit $\mathcal{O}_{E_{i+1,i}^{\theta}(\mathbf{v},1)}$ is closed, and by Proposition \ref{prop:pushforward}, we have $$H^{G\times\mathbb{C}^*}_*(Z_{E_{i,i+1}^{\theta}(\mathbf{v},1)})\simeq\mathbf{R}^{[E_{i,i+1}^{\theta}(\mathbf{v},1)]^\mathfrak{c}}[\hbar],$$
where $W_{[E_{i,i+1}^{\theta}(\mathbf{v},1)]^\mathfrak{c}}$$$=\begin{cases}
    S_{[1,\bar{v}_1]}\times\cdots\times S_{[\bar{v}_{i-1}+1,\bar{v}_i]}\times S_{[\bar{v}_i+2,\bar{v}_{i+1}]}\times\cdots\times(\mathbb{Z}_2^{\frac{v_{n+1}}{2}}\rtimes S_{[\bar{v}_n+1,d]})&\text{for}\ 1\le i\le n-1, \\
    S_{[1,\bar{v}_1]}\times\cdots\times S_{[\bar{v}_{n-1}+1,\bar{v}_n]}\times (\mathbb{Z}_2^{\frac{v_{n+1}}{2}-1}\rtimes S_{[\bar{v}_n+2,d]})&\text{for}\ i=n.
\end{cases}$$
By the assumption of $\mathbf{v}$, $\mathbf{v}+\mathbf{e}_i+\mathbf{e}_{N+1-i}\in\Lambda_{\mathfrak{c},d}$. Moreover, we have 
\begin{align*}
    &H^{G\times\mathbb{C}^*}_*(Z_{\diag(\mathbf{v}+\mathbf{e}_i+\mathbf{e}_{N+1-i})})\\
    \simeq&\begin{cases}
\mathbf{R}^{S_{[1,\bar{v}_1]}\times\cdots\times S_{[\bar{v}_{i-1}+1,\bar{v}_i+1]}\times S_{[\bar{v}_i+2,\bar{v}_{i+1}]}\times\cdots\times(\mathbb{Z}_2^{\frac{v_{n+1}}{2}}\rtimes S_{[\bar{v}_n+1,d]})}[\hbar] & \textit{ if } 1\leq i\leq n-1,\\
\mathbf{R}^{S_{[1,\bar{v}_1]}\times\cdots\times S_{[\bar{v}_{n-1}+1,\bar{v}_n+1]}\times (\mathbb{Z}_2^{\frac{v_{n+1}}{2}-1}\rtimes S_{[\bar{v}_n+2,d]})}[\hbar] & \textit{ if } i=n.
\end{cases}
\end{align*}
Convolving with the class $\mathscr{B}_{i,\mathbf{v},0}$, we see that $$H^{G\times\mathbb{C}^*}_*(Z_{\diag(\mathbf{v}+\mathbf{e}_i+\mathbf{e}_{N+1-i})})\subset H^{G\times\mathbb{C}^*}_*(Z_{E_{i,i+1}^{\theta}(\mathbf{v},1)}).$$ Similarly, $H^{G\times\mathbb{C}^*}_*(Z_{\diag(\mathbf{v}+\mathbf{e}_{i+1}+\mathbf{e}_{N-i})})\subset H^{G\times\mathbb{C}^*}_*(Z_{E_{i,i+1}^{\theta}(\mathbf{v},1)})$.
Since $$\sum\limits_{j\in[\bar{v}_{i-1}+1,\bar{v}_{i}+1]}x_{j}^k\in H^{G\times\mathbb{C}^*}_{*}(Z_{\diag(\mathbf{v}+\mathbf{e}_i+\mathbf{e}_{N+1-i})})$$ and $$\sum\limits_{j\in[\bar{v}_{i-1}+1,\bar{v}_{i}]}x_{j}^k\in H^{G\times\mathbb{C}^*}_{*}(Z_{\diag(\mathbf{v}+\mathbf{e}_{i+1}+\mathbf{e}_{N-i})}),$$
we have $x^{k}_{\bar{v}_{i}+1}\in H^{G\times\mathbb{C}^*}_*(Z_{E_{i+1,i}^{\theta}(\mathbf{v},1)})$.
Together with the above discussion, the lemma follows from the fact that $H^{G\times\mathbb{C}^*}_*(Z_{E_{i,i+1}^{\theta}(\mathbf{v},1)})$ is generated by $H^{G\times\mathbb{C}^*}_*(Z_{\diag(\mathbf{v}+\mathbf{e}_i+\mathbf{e}_{N+1-i})})$ and $x_{\bar{v}_{i}+1}^k$ ($k\ge0$).
\end{proof}

Now we can finish the proof of Theorem \ref{surj}.
\begin{proof}
   Upon specialization at $s\in\mathfrak{g}$, the elements $\sum_{j=1}^{d}x_{j}^{2k}$ in Lemma \ref{gene H} specialize to scalars. Now Theorem \ref{surj} follows from Theorem \ref{thm:generators}, Lemma \ref{ideom}, Lemma \ref{gene H}, and Lemma \ref{gene B}.
\end{proof}

\subsection{Construction of representations}
Via the general construction of the representations of the convolutions in \cite[Chapter 8]{CG97}, we can get some representations of $\mathbf{Y}^\imath$ via the homomorphism \eqref{surj mor} as follows.

Let $A$ be the closed subgroup of $G\times\mathbb{C}^*$ generated by $\exp(a)$. Then $A$ is an algebraic torus, and we have natural isomorphism $H^A_*(\pt)\simeq\mathbb{C}[\mathrm{Lie}(A)]$. Suppose $\mathcal{T}$ is a maximal torus of $G\times\mathbb{C}^*$ which contains $A$, then $\mathrm{Lie}(A)\subset\mathrm{Lie}(\mathcal{T})$. The natural maps $H^{G\times\mathbb{C}^*}_*(\pt)\simeq\mathbb{C}[\mathrm{Lie}(\mathcal{T})]^W\hookrightarrow\mathbb{C}[\mathrm{Lie}(\mathcal{T})]\stackrel{res}{\to}\mathbb{C}[\mathrm{Lie}(A)]\simeq H^{A}(\pt)$ make $H^A_*(\pt)$ a $H^{G\times\mathbb{C}^*}_*(\pt)$-algebra. Then $H^{G\times\mathbb{C}^*}_*(\pt)$-module $\mathbb{C}_a$ can be regarded as an $H^A_*(\pt)$-module via evaluation at $a$ . We have algebra isomorphisms $$H_*^{G\times \mathbb{C}^*}(Z)_a:=H_*^{G\times \mathbb{C}^*}(Z)\otimes_{H^{G\times\mathbb{C}^*}_*(\pt)}\mathbb{C}_a\simeq H^A_*(Z)\otimes_{H^A_*(\mathrm{pt})}\mathbb{C}_a\simeq H_{*}(Z^A).$$ Here $H_{*}(Z^A)$ denotes the (non-equivariant) Borel-Moore homology of $Z^A$ with complex coefficients, which also has convolution algebra structure, see \cite[Charpter 2]{CG97}. The first isomorphism is due to the homological version of \cite[Theorem 6.2.10]{CG97}, while the second one is the homological version of bi-variant localization map \cite[Theorem 5.11.10]{CG97}. Composing with the surjective algebra homomorphism $\Psi_a$ from Theorem \ref{surj}, we get a surjective algebra homomorphism \begin{equation}\label{surj2}
    \mathbf{Y}^\imath_t\twoheadrightarrow H_{*}(Z^A).
\end{equation}
Therefore, every representation of the convolution algebra $H_{*}(Z^A)$ pulls back to a representation of $\mathbf{Y}^\imath_t$. Since the homomorphism in \eqref{surj2} is surjective, the pullback of irreducible representations will remain irreducible.

Recall $a=(s,t)\in\mathfrak{g}\times\mathbb{C}$ be a semisimple element with $t\neq0$. Let $G(s)$ be the centralizer of $s$.
Let $\mathscr{M}^A:=\bigsqcup_{\mathbf{v}}(T^*\mathscr{F}_\mathbf{v})^A$ be the fixed loci. Then the map $\pi: \mathscr{M}^A\to\mathscr{N}^A$ is $G(s)$-equivariant. For any $x\in \mathscr{N}^A$, let $\mathscr{M}^A_x$ be the fiber, which are called Spaltenstein varieties. The equivariant version of the decomposition theorem gives $$\pi_*\underline{\mathbb{C}}_{\mathscr{M}^A}=\bigoplus\limits_{k\in\mathbb{Z},\phi=(\mathcal{O}_{\phi}\subset\mathscr{N}^A,\chi_\phi)}L_\phi(k)\otimes\mathrm{IC}_\phi[k].$$
Here $\mathcal{O}_\phi$ is a $G(s)$-orbit on $\mathscr{N}^A$, $\chi_\phi$ is a $G(s)$-equivariant local system on $\mathcal{O}_\phi$, $\mathrm{IC}_\phi$ is the corresponding intersection homology complex, and $L_\phi(k)$ is some vector space.
Let $L_\phi=\oplus_{k}L_\phi(k)$, which is a simple $H_*(Z^A)$-module if nonzero, see \cite[Theorem 8.6.12]{CG97}. For any $x\in\mathcal{O}_\phi$, let $H_*(\mathscr{M}^A_x)_\phi$ be the $\phi$-isotypical component of $H_*(\mathscr{M}^A_x)$, which is a module of $H_*(Z^A)$ via convolution. Via the pullback \eqref{surj mor}, $H_*(\mathscr{M}^A_x)_\phi$ is also a module of $\mathbf{Y}^\imath_t$, which is called the {\it standard module}. We can also view $L_\phi$ as a $\mathbf{Y}^\imath_t$-module this way. Then the constructions in \cite[Chapter 8]{CG97} gives the following result.
\begin{thm}
    Assume $t\neq0$.
    \begin{itemize}
        \item[(1)] The $\mathbf{Y}^\imath_t$-module $L_\phi$ is simple if it is nonzero.
        \item[(2)] For any $\phi=(\mathcal{O}_\phi,\chi_\phi)$ and $\psi=(\mathcal{O}_\psi,\chi_\psi)$ and $x\in\mathcal{O}_\phi$,
        $$[H_*(\mathscr{M}_x^A)_\phi:L_\psi]=\sum\limits_{k}\dim H^k(i^!_x\mathrm{IC}_\psi)_\phi.$$
    \end{itemize}
\end{thm}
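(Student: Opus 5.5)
The plan is to deduce both parts from the general theory of convolution algebras in \cite[Chapter 8]{CG97}, applied to the convolution algebra $H_*(Z^A)$. The results are then transported to $\mathbf{Y}^\imath_t$ through the surjection \eqref{surj2}, which comes from Theorem \ref{surj} together with the bivariant localization isomorphisms $H_*^{G\times\C^*}(Z)_a\simeq H_*(Z^A)$.

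\textbf{Setting up the geometry.} First, I would check that the hypotheses of \cite[Section 8.6]{CG97} hold for $\pi:\mathscr{M}^A\to\mathscr{N}^A$.
\begin{itemize}
\item The variety $\mathscr{M}^A$ is smooth, since it is the fixed locus of a torus acting on the smooth variety $\mathscr{M}$. It is a finite disjoint union of smooth components.
\item The map $\pi$ is proper and $G(s)$-equivariant.
\item $Z^A=\mathscr{M}^A\times_{\mathscr{N}^A}\mathscr{M}^A$.
\end{itemize}
Then \cite[Theorem 8.6.7]{CG97} gives an algebra isomorphism $H_*(Z^A)\simeq \mathrm{Ext}^*(\pi_*\underline{\C}_{\mathscr{M}^A},\pi_*\underline{\C}_{\mathscr{M}^A})$, up to the standard degree conventions. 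Substituting the decomposition theorem stated above, together with the vanishing of negative Ext groups between IC complexes, this algebra has a nilpotent radical. Its semisimple quotient is $\bigoplus_\phi \mathrm{End}(L_\phi)$.

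\textbf{Part (1).} Each nonzero $L_\phi$ is then a simple $H_*(Z^A)$-module, by \cite[Theorem 8.6.12]{CG97}. Because \eqref{surj2} is surjective, every $\mathbf{Y}^\imath_t$-stable subspace of $L_\phi$ is also $H_*(Z^A)$-stable. So the pullback of $L_\phi$ is simple as a $\mathbf{Y}^\imath_t$-module.

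\textbf{Part (2).} I would first identify $H_*(\mathscr{M}^A_x)$ with $H^*(i_x^!\pi_*\underline{\C}_{\mathscr{M}^A})$ via base change. By the decomposition theorem this equals $\bigoplus_{\psi,k}L_\psi(k)\otimes H^{*}(i_x^!\mathrm{IC}_\psi[k])$, and it carries a compatible action of the component group of the stabilizer of $x$; taking $\phi$-isotypic parts gives $H_*(\mathscr{M}^A_x)_\phi$. Next, I would apply the filtration argument of \cite[Theorem 8.6.23]{CG97}, filtering by degree, where $L_\psi$ appears with multiplicity $\sum_k\dim H^k(i^!_x\mathrm{IC}_\psi)_\phi$ in the associated graded. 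Finally, pulling back along the surjection \eqref{surj2} preserves composition series and simplicity. This yields the stated multiplicity formula for $\mathbf{Y}^\imath_t$-modules.

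\textbf{Expected obstacle.} The main care point is bookkeeping rather than new mathematics. One must make sure that the $A$-fixed-point reduction and the identification of $H_*^{G\times\C^*}(Z)_a$ with $H_*(Z^A)$ are compatible with the module structures on the fibers $H_*(\mathscr{M}_x^A)$. One must also check that the degree shifts match the convention $H^k(i_x^!\cdot)$ used in the statement; these are exactly as in \cite[Section 8.6]{CG97}. I would cite \cite[Chapter 8]{CG97} for these points verbatim.
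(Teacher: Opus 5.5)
Your proposal is correct and follows essentially the same route as the paper, which simply invokes \cite[Chapter 8]{CG97} (in particular \cite[Theorem 8.6.12]{CG97} and the standard-module multiplicity formula there) for the convolution algebra $H_*(Z^A)$. It then transports the conclusions along the surjection \eqref{surj2}, exactly as you do by noting that pullback along a surjective homomorphism preserves simplicity, isomorphism classes and composition multiplicities; the one hypothesis of \cite[Section 8.6]{CG97} you did not list, that $G(s)$ has finitely many orbits on $\mathscr{N}^A$ so that the orbit stratification is finite, is likewise taken for granted in the paper and is a standard fact for the graded pieces of $\mathfrak{g}$ cut out by $a$.
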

As mentioned in the introduction, the twisted Yangian $\mathbf{Y}^\imath$ studied in this paper is closely related to the reflection algebra introduced by Molev and Ragoucy, see \cite[Theorem 6.17]{LZ25} and \cite{MR02}. It is an interesting question to identify the representations constructed above with those from \cite{MR02}.

\section{Checking relations}\label{sec:relations}

In the rest of this paper, we prove Theorem \ref{thm:polyrep} by checking that the operators satisfy the relations in Proposition \ref{prop:Ygen} for the twisted Yangian $\mathbf{Y}^\imath$. 

\subsection{Operators}
Let us first recall the operators from Section \ref{sec:operators}. 
For $1\leq i\leq 2n$ and $\mathbf{v}\in\Lambda_{\mathfrak{c},d}$, 
\begin{align*}
        H_{i,\mathbf{v}}(u):=&(1-\frac{\delta_{i,n}-\delta_{i,n+1}}{4u}\hbar)\cdot \Phi_{[\mathbf{v}]_{i}}(-u-\frac{n-i}{2}\hbar-\frac{\hbar}{4})\cdot \Phi_{[\mathbf{v}]_{\tau i}}(u+\frac{n-i}{2}\hbar+\frac{\hbar}{4}).
\end{align*}
Consider $H_{i,\mathbf{v}}(u)$ as a series in $u^{-1}$, then it has constant $1$, and set $H^\circ_{i,\mathbf{v}}(u):=H_{i,\mathbf{v}}(u)-1$. Recall $\mathbf{H}_i(u)$ is the operator on $\mathbf{P}$, which acts on $\mathbf{R}^{[\mathbf{v}]^\mathfrak{c}}$ by multiplying by $H_{i,\mathbf{v}}(u)$.

For $1\leq i\leq 2n$,
\begin{equation*}
    (B_{i,r}f)(x_{[\mathbf{v}]}):=\sqrt{-1}\sum_{j\in [\mathbf{v}]_{i}}(-x_j-\frac{n-i}{2}\hbar-\frac{\hbar}{4})^r\cdot \Phi_{[\mathbf{v}]_{i}\setminus\{j\}}(x_j)\cdot  f(x_{\tau^+_j[\mathbf{v}]}),
\end{equation*}
and let
\[
\mathbf{B}_i(u):=\hbar\sum\limits_{r\ge0}B_{i,r}u^{-r-1}.
\]
Then 
\begin{align*}
   (\mathbf{B}_{i}(u)f)(x_{\mathbf{[v]}})=\sqrt{-1}\sum\limits_{t\in[\mathbf{v}]_{i}}\frac{\hbar}{u+x_t+\frac{n-i}{2}\hbar+\frac{\hbar}{4}}\Phi_{[\mathbf{v}]_{i}\backslash\{t\}}(x_t)\cdot f(x_{\tau_t^+\mathbf{[v]}}).
\end{align*}
Then under the map in Theorem \ref{thm:polyrep}, $h_i(u)$ (resp. $b_i(u)$) is sent to $\mathbf{H}_{i}(u)$ (resp. $\mathbf{B}_{i}(u)$.)

The first relation in Proposition \ref{prop:Ygen}
follows directly from the definition. Before checking other relations, we need some preparations.
For a Laurent series $p(z)=\sum_{i\in \mathbb{Z}} a_iz^{i}$ in $z^{-1}$,
we denote the truncation 
\[\big(p(z)\big)^{\circ}
:=\sum_{i< 0}a_iz^i.\] 
If $\tilde{p}(z):=p(z)/z$, then
\begin{equation}\label{equ:trundivbyz}
    u\big(\tilde{p}(u)\big)^\circ+v\big(\tilde{p}(-v)\big)^\circ
    =u\sum_{i< 1}a_iu^{i-1}-(-v)\sum_{i< 1}a_i(-v)^{i-1}=\big(p(u)\big)^{\circ}-\big(p(-v)\big)^{\circ}.
\end{equation}

\begin{lem}\cite{SSX25}\label{lem:partialfrac}
    Assume $p(z)=\dfrac{q(z)}{\prod_{i=1}^n (z-z_i)}$, where $z_1,\ldots,z_n$ are distinct and $q(z)\in \mathbb{C}[z]$. Then
\[(p(z))^\circ=\sum_{i=1}^n\frac{1}{z-z_i}\frac{q(z_i)}{\prod_{j\neq i}(z_i-z_j)}=\sum_{i=1}^n\frac{1}{z-z_i}\Res(p(z),z_i).\]
\end{lem}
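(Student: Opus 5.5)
The statement is a partial fraction identity: for $p(z)=q(z)/\prod_{i=1}^n(z-z_i)$ with distinct $z_i$, the principal (strictly negative power) part at $z=\infty$ equals the sum of the simple-pole parts. My plan is to split $p$ into a polynomial and a proper rational function, and compute the proper part by residues.

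First, I divide $q(z)$ by $D(z):=\prod_{i=1}^n(z-z_i)$ to write $q=sD+r$ with $s,r\in\C[z]$ and $\deg r<n$. Then $p(z)=s(z)+r(z)/D(z)$. Expanding in $z^{-1}$, the polynomial $s(z)$ contributes only nonnegative powers of $z$. The rational function $r/D$ has numerator degree smaller than denominator degree, so its expansion at infinity involves only strictly negative powers. Hence $(p(z))^\circ=r(z)/D(z)$.

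Next, since the $z_i$ are distinct and $\deg r<n$, Lagrange interpolation (equivalently, the standard partial fraction decomposition) gives
\[\frac{r(z)}{D(z)}=\sum_{i=1}^n\frac{1}{z-z_i}\,\frac{r(z_i)}{\prod_{j\neq i}(z_i-z_j)}.\]
To verify this, I multiply both sides by $D(z)$. Both sides then become polynomials of degree less than $n$, and they agree at the $n$ points $z_i$. Finally, $r(z_i)=q(z_i)$ because $D(z_i)=0$, which gives the first equality.

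For the second equality, $z_i$ is a simple pole of $p$, so
\[\Res(p(z),z_i)=\lim_{z\to z_i}(z-z_i)p(z)=\frac{q(z_i)}{\prod_{j\neq i}(z_i-z_j)}.\]

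No step is a real obstacle. The only point that needs care is the claim that the truncation $(\cdot)^\circ$ kills exactly the polynomial part. This holds because the expansion of a proper rational function in $z^{-1}$ has no constant or positive-degree terms.
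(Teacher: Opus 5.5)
Your proof is correct and complete. The division $q=sD+r$ gives $(p(z))^\circ=r(z)/D(z)$, the interpolation check on polynomials of degree $<n$ gives the partial fraction expansion, and $r(z_i)=q(z_i)$ yields the first equality.

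The paper gives no proof of its own, since the lemma is quoted from \cite{SSX25}, so there is nothing to compare against; your route is the standard one. An equivalent alternative is to use the residue theorem: the coefficient of $z^{-k-1}$ in the expansion of $p(z)$ at infinity is $\sum_i z_i^k\Res(p(z),z_i)$, which is exactly the coefficient of $z^{-k-1}$ on the right-hand side.

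The only imprecision is calling every $z_i$ a simple pole. If $q(z_i)=0$, then $z_i$ is removable, but your limit formula returns $0$ there, so both equalities still hold.
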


Let us apply this lemma to $H_{i,\mathbf{v}}(u)$ in \eqref{equ:Hivu}. Since it is a series in $u^{-1}$ with constant term $1$, $H^\circ_{i,\mathbf{v}}(u)$ is its truncation $(H_{i,\mathbf{v}}(u))^\circ$ defined as above.
\begin{lem}\label{lem:Hexp}
    For $i\neq n, n+1$, the following identities hold:
    \begin{align*}
    H^\circ_{i,\mathbf{v}}(u)=&\sum\limits_{t\in[\mathbf{v}]_{i}}\frac{\hbar}{u+x_t+\frac{n-i}{2}\hbar+\frac{\hbar}{4}}\Phi_{[\mathbf{v}]_{i}\backslash\{t\}}(x_t)\Phi_{[\mathbf{v}]_{\tau i}}(-x_t)\\
    &-\sum\limits_{s\in[\mathbf{v}]_{\tau i}}\frac{\hbar}{u-x_s+\frac{n-i}{2}\hbar+\frac{\hbar}{4}}\Phi_{[\mathbf{v}]_{i}}(-x_s)\Phi_{[\mathbf{v}]_{\tau i}\backslash\{s\}}(x_s),
\end{align*}
\begin{align*}
    (u\cdot H_{i,\mathbf{v}}(u))^\circ=&-\sum\limits_{t\in[\mathbf{v}]_{i}}\frac{\hbar(x_t+\frac{n-i}{2}\hbar+\frac{\hbar}{4})}{u+x_t+\frac{n-i}{2}\hbar+\frac{\hbar}{4}}\Phi_{[\mathbf{v}]_{i}\backslash\{t\}}(x_t)\Phi_{[\mathbf{v}]_{\tau i}}(-x_t)\\
    &-\sum\limits_{s\in[\mathbf{v}]_{\tau i}}\frac{\hbar(x_s-\frac{n-i}{2}\hbar-\frac{\hbar}{4})}{u-x_s+\frac{n-i}{2}\hbar+\frac{\hbar}{4}}\Phi_{[\mathbf{v}]_{i}}(-x_s)\Phi_{[\mathbf{v}]_{\tau i}\backslash\{s\}}(x_s),
\end{align*}
and for $i=n$,
\begin{align*}
    (u\cdot H_{n,\mathbf{v}}(u))^\circ=&-\sum\limits_{t\in[\mathbf{v}]_{n}}\frac{\hbar(x_t+\frac{\hbar}{2})}{u+x_t+\frac{\hbar}{4}}\Phi_{[\mathbf{v}]_{n}\backslash\{t\}}(x_t)\Phi_{[\mathbf{v}]_{n+1}}(-x_t)\\
    &-\sum\limits_{s\in[\mathbf{v}]_{n+1}}\frac{\hbar(x_s-\frac{\hbar}{2})}{u-x_s+\frac{\hbar}{4}}\Phi_{[\mathbf{v}]_{n}}(-x_s)\Phi_{[\mathbf{v}]_{n+1}\backslash\{s\}}(x_s).
\end{align*}
\end{lem}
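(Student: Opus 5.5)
Since $H_{i,\mathbf{v}}(u)$, and likewise $u\,H_{i,\mathbf{v}}(u)$, is a rational function of $u$ with only simple poles (assuming the $x$'s are generic), I will apply Lemma \ref{lem:partialfrac} to compute $H^\circ_{i,\mathbf{v}}(u)=(H_{i,\mathbf{v}}(u))^\circ$ and $(u H_{i,\mathbf{v}}(u))^\circ$. For that I need to locate the poles and compute the residues. Write $c:=\frac{n-i}{2}\hbar+\frac{\hbar}{4}$. Since $\varphi(x)=1+\hbar/x$, we have
\[\Phi_{[\mathbf{v}]_i}(-u-c)=\prod_{t\in[\mathbf{v}]_i}\frac{x_t+u+c+\hbar}{x_t+u+c},\qquad \Phi_{[\mathbf{v}]_{\tau i}}(u+c)=\prod_{s\in[\mathbf{v}]_{\tau i}}\frac{x_s-u-c+\hbar}{x_s-u-c}.\]
The poles are therefore at $u=-x_t-c$ for $t\in[\mathbf{v}]_i$ and at $u=x_s-c$ for $s\in[\mathbf{v}]_{\tau i}$. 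When $i\neq n,n+1$, the prefactor equals $1$. Generically the poles are distinct, and the identity is polynomial after clearing denominators, so it suffices to prove it generically.

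I first compute the residue at $u=-x_t-c$. The factor $\frac{x_t+u+c+\hbar}{x_t+u+c}$ contributes residue $\hbar$. Each remaining factor $t_2\neq t$ becomes $\frac{x_{t_2}-x_t+\hbar}{x_{t_2}-x_t}=\varphi(x_{t_2}-x_t)$, which gives $\Phi_{[\mathbf{v}]_i\setminus\{t\}}(x_t)$. The $\tau i$ factors become $\frac{x_s+x_t+\hbar}{x_s+x_t}=\varphi(x_s+x_t)$, giving $\Phi_{[\mathbf{v}]_{\tau i}}(-x_t)$.

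Next I compute the residue at $u=x_s-c$. The factor $\frac{x_s-u-c+\hbar}{x_s-u-c}=-\frac{\hbar}{u-x_s+c}+1$ contributes residue $-\hbar$. The other $\tau i$ factors become $\varphi(x_{s_2}-x_s)$, and the $i$ factors become $\varphi(x_t+x_s)=\varphi(x_t-(-x_s))$, giving $\Phi_{[\mathbf{v}]_i}(-x_s)$. Feeding these residues into Lemma \ref{lem:partialfrac} yields the first identity.

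For $(uH)^\circ$, the residue of $uH$ at a simple pole $u_0$ is $u_0\Res(H,u_0)$. This multiplies the first sum by $-(x_t+c)$ and the second by $x_s-c$, which produces the stated signs: the second sum carries $-\hbar(x_s-c)$. The polynomial part of $uH$ is discarded by the truncation, so only the residues enter.

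For $i=n$, $c=\hbar/4$ and there is an extra prefactor $1-\frac{\hbar}{4u}$. Then $u(1-\frac{\hbar}{4u})=u-\frac{\hbar}{4}$, so $uH_{n,\mathbf{v}}(u)=(u-\frac{\hbar}{4})\Phi\Phi$, with no new pole at $u=0$. This is why I treat $(uH)^\circ$ rather than $H^\circ$ in that case. The residues are now multiplied by $u_0-\frac{\hbar}{4}$.
\begin{itemize}
\item At $u_0=-x_t-\frac{\hbar}{4}$ the multiplier is $-(x_t+\frac{\hbar}{2})$.
\item At $u_0=x_s-\frac{\hbar}{4}$ the multiplier is $x_s-\frac{\hbar}{2}$, and combined with the residue $-\hbar$ this gives $-\hbar(x_s-\frac{\hbar}{2})$.
\end{itemize}
Here $\tau n=n+1$, which matches the stated formula.

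The only delicate point is justifying genericity. When $[\mathbf{v}]_i$ and $[\mathbf{v}]_{\tau i}$ interact through $x_{r'}=-x_r$, a pole $-x_t-c$ could coincide with some $x_s-c$ if $x_s=-x_t$. This cannot happen, because $[\mathbf{v}]_{\tau i}$ and $[\mathbf{v}]_i$ are disjoint from each other and consist of indices whose primes lie in $[\mathbf{v}]_{\tau i+1}$ and $[\mathbf{v}]_{i+1}$ respectively. Hence the poles are distinct as rational functions of the independent variables $x_1,\dots,x_d$. The case $i=n$ requires the same check, which also holds because $n+1\neq\tau n+1=n+2$, so the index sets are distinct.
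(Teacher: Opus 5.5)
Your proposal is correct and follows the same route as the paper. You locate the simple poles of $H_{i,\mathbf{v}}(u)$, and for $i=n$ those of $(u-\frac{\hbar}{4})\Phi_{[\mathbf{v}]_n}(-u-\frac{\hbar}{4})\Phi_{[\mathbf{v}]_{n+1}}(u+\frac{\hbar}{4})$, then apply Lemma \ref{lem:partialfrac}; your explicit residue computations and pole-distinctness check spell out what the paper leaves implicit. One small wording slip: the primes of $[\mathbf{v}]_i$ lie in $[\mathbf{v}]_{\tau i+1}$ and those of $[\mathbf{v}]_{\tau i}$ lie in $[\mathbf{v}]_{i+1}$, not the other way round, although your final check for $i=n$ already uses the correct version.
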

\begin{proof}
    Recall that for $i\neq n,n+1$,
    \begin{align*}
        H_{i,\mathbf{v}}(u):=&\Phi_{[\mathbf{v}]_{i}}(-u-\frac{n-i}{2}\hbar-\frac{\hbar}{4})\Phi_{[\mathbf{v}]_{\tau i}}(u+\frac{n-i}{2}\hbar+\frac{\hbar}{4})\\
        =&\prod_{t\in [\mathbf{v}]_{i}}\bigg(1+\frac{\hbar}{x_t+u+\frac{n-i}{2}\hbar+\frac{\hbar}{4}}\bigg)\prod_{s\in [\mathbf{v}]_{\tau i}}\bigg(1+\frac{\hbar}{x_s-u-\frac{n-i}{2}\hbar-\frac{\hbar}{4}}\bigg).
    \end{align*}
    It has simple poles at $\{-x_t-\frac{n-i}{2}\hbar-\frac{\hbar}{4}\mid t\in [\mathbf{v}]_{i}\}$ and $\{x_s-\frac{n-i}{2}\hbar-\frac{\hbar}{4}\mid s\in [\mathbf{v}]_{\tau i}\}$. Then the first two identities follows from Lemma \ref{lem:partialfrac}. Same arguments apply to  \begin{align*}
        u\cdot H_{n,\mathbf{v}}(u)=&(u-\frac{\hbar}{4})\Phi_{[\mathbf{v}]_{n}}(-u-\frac{\hbar}{4})\Phi_{[\mathbf{v}]_{\tau n}}(u+\frac{\hbar}{4})\\
    \end{align*}
    yields the last identity.
\end{proof}

\subsection{Relation \eqref{equ:bibj}}
We need to check the relation 
\begin{align}\label{equ:BBij}
    (u-v)[b_i(u),b_j(v)]-&\frac{c_{ij}}{2}\hbar\{b_i(u),b_j(v)\}-\hbar([b_{i,0},b_j(v)]-[b_i(u),b_{j,0}])\\&=-\delta_{\tau i,j}\hbar\bigg(\frac{2u}{u+v}h^{\circ}_i(u)+\frac{2v}{u+v}h^{\circ}_j(v)\bigg).\notag
\end{align}

If $j\neq \tau i$ and $c_{ij}=0$, then it follows from the relation $[\mathbf{B}_i(u),\mathbf{B}_j(v)]=0$ checked in Section \ref{sec:bibjcij0} below. Hence, we are left with the cases $j= \tau i$ or $c_{ij}\neq 0$, which is further decomposed into the following cases:
\begin{itemize}
    \item $j\neq \tau i$, $c_{ij}= 2$; 
    \item $j\neq \tau i$, $c_{ij}= -1$;
    \item $j=\tau i$. 
\end{itemize}

\subsubsection{The case $j\neq \tau i$, $i=j$}

By definition,
\begin{align*}
    &(\mathbf{B}_{i}(u)\mathbf{B}_{i}(v)f)(x_{\mathbf{[v]}})\\
    =&-\sum\limits_{t\neq s\in[\mathbf{v}]_{i}}\frac{\hbar}{u+x_t+\frac{n-i}{2}\hbar+\frac{\hbar}{4}}\frac{\hbar}{v+x_s+\frac{n-i}{2}\hbar+\frac{\hbar}{4}}\Phi_{[\mathbf{v}]_{i}\backslash\{t\}}(x_t)\Phi_{[\mathbf{v}]_{i}\backslash\{t,s\}}(x_s)\cdot (\mathbf{B}_{i}(v)f)(x_{\tau_s^+\tau_t^+\mathbf{[v]}}),
\end{align*}
and 
\begin{align*}
    &(\mathbf{B}_{i}(v)\mathbf{B}_{i}(u)f)(x_{\mathbf{[v]}})\\
    =&-\sum\limits_{t\neq s\in[\mathbf{v}]_{i}}\frac{\hbar}{u+x_t+\frac{n-i}{2}\hbar+\frac{\hbar}{4}}\frac{\hbar}{v+x_s+\frac{n-i}{2}\hbar+\frac{\hbar}{4}}\Phi_{[\mathbf{v}]_{i}\backslash\{s\}}(x_s)\Phi_{[\mathbf{v}]_{i}\backslash\{t,s\}}(x_t)\cdot (\mathbf{B}_{i}(v)f)(x_{\tau_s^+\tau_t^+\mathbf{[v]}}).
\end{align*}
Then \eqref{equ:BBij} holds as the 
coefficient of 
\[\frac{\hbar}{u+x_t+\frac{n-i}{2}\hbar+\frac{\hbar}{4}}\frac{\hbar}{v+x_s+\frac{n-i}{2}\hbar+\frac{\hbar}{4}}\Phi_{[\mathbf{v}]_{i}\backslash\{t,s\}}(x_t)\Phi_{[\mathbf{v}]_{i}\backslash\{t,s\}}(x_s)\cdot (\mathbf{B}_{i}(v)f)(x_{\tau_s^+\tau_t^+\mathbf{[v]}})\] in the left hand side of \eqref{equ:BBij} is
\begin{align*}
    &-(u-v-\hbar)\varphi(x_s-x_t)+(u-v+\hbar)\varphi(x_t-x_s)\\
    &+(u+x_t+\frac{n-i}{2}\hbar+\frac{\hbar}{4})\varphi(x_s-x_t)-(v+x_s+\frac{n-i}{2}\hbar+\frac{\hbar}{4})\varphi(x_s-x_t)\\
    &-(u+x_t+\frac{n-i}{2}\hbar+\frac{\hbar}{4})\varphi(x_t-x_s)+(v+x_s+\frac{n-i}{2}\hbar+\frac{\hbar}{4})\varphi(x_t-x_s)=0.
\end{align*}

\subsubsection{The case $j\neq \tau i$, $c_{i,j}=-1$}

In this case, $j=i\pm 1$. By Lemma \ref{lem:inv}, we can assume $j=i+1$. Since $j\neq \tau i$, we get $i\neq n$. Then
\begin{align*}
    &(\mathbf{B}_{i}(u)\mathbf{B}_{i+1}(v)f)(x_{\mathbf{[v]}})\\
    =&-\sum\limits_{t\in[\mathbf{v}]_{i}}\sum\limits_{s\in[\mathbf{v}]_{i+1}\cup\{t\}}\frac{\hbar}{u+x_t+\frac{n-i}{2}\hbar+\frac{\hbar}{4}}\frac{\hbar}{v+x_s+\frac{n-i}{2}\hbar-\frac{\hbar}{4}}\Phi_{[\mathbf{v}]_{i}\backslash\{t\}}(x_t)\Phi_{[\mathbf{v}]_{i+1}\cup\{t\}\backslash\{s\}}(x_s)\cdot f(x_{\tau_s^+\tau_t^+\mathbf{[v]}}),
\end{align*}
and
\begin{align*}
    &(\mathbf{B}_{i+1}(v)\mathbf{B}_{i}(u)f)(x_{\mathbf{[v]}})\\
    =&-\sum\limits_{t\in[\mathbf{v}]_{i}}\sum\limits_{s\in[\mathbf{v}]_{i+1}}\frac{\hbar}{u+x_t+\frac{n-i}{2}\hbar+\frac{\hbar}{4}}\frac{\hbar}{v+x_s+\frac{n-i}{2}\hbar-\frac{\hbar}{4}}\Phi_{[\mathbf{v}]_{i}\backslash\{t\}}(x_t)\Phi_{[\mathbf{v}]_{i+1}\backslash\{s\}}(x_s)\cdot f(x_{\tau_s^+\tau_t^+\mathbf{[v]}}),
\end{align*}
Therefore, the coefficient of 
\[\frac{\hbar}{u+x_t+\frac{n-i}{2}\hbar+\frac{\hbar}{4}}\frac{\hbar}{v+x_s+\frac{n-i}{2}\hbar-\frac{\hbar}{4}}\Phi_{[\mathbf{v}]_{i}\setminus\{t\}}(x_t)\Phi_{[\mathbf{v}]_{i+1}}(x_s)\cdot f(x_{\tau_t^-\tau_t^-[\mathbf{v}]})\]
in the left hand side of \eqref{equ:BBij}
is 
\begin{align*}
    -(u-v+\frac{\hbar}{2})+(u+x_t+\frac{n-i}{2}\hbar+\frac{\hbar}{4})-(v+x_t+\frac{n-i}{2}\hbar-\frac{\hbar}{4})=0.
\end{align*}
On the other hand, for $t\in[\mathbf{v}]_{i}$ and $s\in[\mathbf{v}]_{i+1}$ the coefficient of 
\[\frac{\hbar}{u+x_t+\frac{n-i}{2}\hbar+\frac{\hbar}{4}}\frac{\hbar}{v+x_s+\frac{n-i}{2}\hbar-\frac{\hbar}{4}}\Phi_{[\mathbf{v}]_{i}\backslash\{t\}}(x_t)\Phi_{[\mathbf{v}]_{i+1}\backslash\{s\}}(x_s)\cdot f(x_{\tau_s^+\tau_t^+\mathbf{[v]}})\]
in the left hand side of \eqref{equ:BBij}
is 
\begin{align*}
    &-(u-v+\frac{\hbar}{2})\varphi(x_t-x_s)+(u-v-\frac{\hbar}{2})\\
    &+(u+x_t+\frac{n-i}{2}\hbar+\frac{\hbar}{4})\varphi(x_t-x_s)-(u+x_t+\frac{n-i}{2}\hbar+\frac{\hbar}{4})\\
    &-(v+x_s+\frac{n-i}{2}\hbar-\frac{\hbar}{4})\varphi(x_t-x_s)+(v+x_s+\frac{n-i}{2}\hbar-\frac{\hbar}{4})=0.
\end{align*}
Hence, \eqref{equ:BBij} holds in this case.

\subsubsection{The case $j= \tau i$}

Let us first consider the case $c_{i,\tau i}=0$.
Then 
\begin{align*}
    &(\mathbf{B}_{i}(u)\mathbf{B}_{\tau i}(v)f)(x_{\mathbf{[v]}})\\
    =&-\sum\limits_{t\in[\mathbf{v}]_{i}}\sum\limits_{s\in[\mathbf{v}]_{\tau i}\cup\{t'\}}\frac{\hbar}{u+x_t+\frac{n-i}{2}\hbar+\frac{\hbar}{4}}\frac{\hbar}{v+x_s-\frac{n-i}{2}\hbar-\frac{\hbar}{4}}\Phi_{[\mathbf{v}]_{i}\backslash\{t\}}(x_t)\Phi_{[\mathbf{v}]_{\tau i}\cup\{t'\}\backslash\{s\}}(x_s)\cdot f(x_{\tau_s^+\tau_t^+\mathbf{[v]}}),
\end{align*}
and 
\begin{align*}
    &(\mathbf{B}_{\tau i}(v)\mathbf{B}_{i}(u)f)(x_{\mathbf{[v]}})\\
    =&-\sum\limits_{s\in[\mathbf{v}]_{\tau i}}\sum\limits_{t\in[\mathbf{v}]_{ i}\cup\{s'\}}\frac{\hbar}{u+x_t+\frac{n-i}{2}\hbar+\frac{\hbar}{4}}\frac{\hbar}{v+x_s-\frac{n-i}{2}\hbar-\frac{\hbar}{4}}\Phi_{[\mathbf{v}]_{i}\cup\{s'\}\backslash\{t\}}(x_t)\Phi_{[\mathbf{v}]_{\tau i}\backslash\{s\}}(x_s)\cdot f(x_{\tau_s^+\tau_t^+\mathbf{[v]}}).
\end{align*}
It is obvious that for $t\in[\mathbf{v}]_{i}, s\in[\mathbf{v}]_{\tau i}$, the coefficient of 
$f(x_{\tau_s^+\tau_t^+\mathbf{[v]}})$
in the left hand side of \eqref{equ:BBij} is $0$.
Therefore, the coefficient of $f(x_{\mathbf{[v]}})$ in the left hand side of \eqref{equ:BBij} is 
\begin{align*}
    &\sum\limits_{t\in[\mathbf{v}]_{i}}\Phi_{[\mathbf{v}]_{i}\backslash\{t\}}(x_t)\Phi_{[\mathbf{v}]_{\tau i}}(-x_t)\\
    &\bigg(-(u-v)\frac{\hbar}{u+x_t+\frac{n-i}{2}\hbar+\frac{\hbar}{4}}\frac{\hbar}{v-x_t-\frac{n-i}{2}\hbar-\frac{\hbar}{4}}+\hbar\frac{\hbar}{v-x_t-\frac{n-i}{2}\hbar-\frac{\hbar}{4}}-\hbar\frac{\hbar}{u+x_t+\frac{n-i}{2}\hbar+\frac{\hbar}{4}}\bigg)\\
    &+\sum\limits_{s\in[\mathbf{v}]_{\tau i}}\Phi_{[\mathbf{v}]_{i}}(-x_s)\Phi_{[\mathbf{v}]_{\tau i}\backslash\{s\}}(x_s)\\
    &\bigg((u-v)\frac{\hbar}{u-x_s+\frac{n-i}{2}\hbar+\frac{\hbar}{4}}\frac{\hbar}{v+x_s-\frac{n-i}{2}\hbar-\frac{\hbar}{4}}-\hbar\frac{\hbar}{v+x_s-\frac{n-i}{2}\hbar-\frac{\hbar}{4}}+\hbar\frac{\hbar}{u-x_s+\frac{n-i}{2}\hbar+\frac{\hbar}{4}}\bigg)\\
    =&\frac{2\hbar}{u+v}\bigg(\sum\limits_{t\in[\mathbf{v}]_{i}}\Phi_{[\mathbf{v}]_{i}\backslash\{t\}}(x_t)\Phi_{[\mathbf{v}]_{\tau i}}(-x_t)\frac{\hbar(x_t+\frac{(n-i)\hbar}{2}+\frac{\hbar}{4})}{u+x_t+\frac{n-i}{2}\hbar+\frac{\hbar}{4}}\\
    &+\sum\limits_{s\in[\mathbf{v}]_{\tau i}}\Phi_{[\mathbf{v}]_{i}}(-x_s)\Phi_{[\mathbf{v}]_{\tau i}\backslash\{s\}}(x_s)\frac{\hbar(x_s-\frac{(n-i)\hbar}{2}-\frac{\hbar}{4})}{u-x_s+\frac{n-i}{2}\hbar+\frac{\hbar}{4}}\bigg)\\
    &+\frac{2\hbar}{u+v}\bigg(\sum\limits_{t\in[\mathbf{v}]_{i}}\Phi_{[\mathbf{v}]_{i}\backslash\{t\}}(x_t)\Phi_{[\mathbf{v}]_{\tau i}}(-x_t)\frac{\hbar(x_t+\frac{(n-i)\hbar}{2}+\frac{\hbar}{4})}{v-x_t-\frac{n-i}{2}\hbar-\frac{\hbar}{4}}\\
    &+\sum\limits_{s\in[\mathbf{v}]_{\tau i}}\Phi_{[\mathbf{v}]_{i}}(-x_s)\Phi_{[\mathbf{v}]_{\tau i}\backslash\{s\}}(x_s)\frac{\hbar(x_s-\frac{(n-i)\hbar}{2}-\frac{\hbar}{4})}{v+x_s-\frac{n-i}{2}\hbar-\frac{\hbar}{4}}\bigg)\\
    =&-\frac{2\hbar}{u+v}\bigg((uH_{i,\mathbf{v}}(u))^\circ-((-v)H_{i,\mathbf{v}}(-v))^\circ\bigg)\\
    =&-\frac{2\hbar}{u+v}(uH^\circ_{i,\mathbf{v}}(u)+vH^\circ_{\tau i,\mathbf{v}}(v)).
\end{align*}
Here the second equality follows from Lemma \ref{lem:Hexp}, while the last one follows from \eqref{equ:trundivbyz}.

Finally, let us check the case $j=\tau i$ and $c_{i,j}\neq 0$. Then $(i,j)=(n,n+1)$ or $(i,j)=(n+1,n)$. By Lemma \ref{lem:inv}, it is enough to check the first case. 

By definition,
\begin{align*}
    &(\mathbf{B}_{n}(u)\mathbf{B}_{n+1}(v)f)(x_{\mathbf{[v]}})\\
    =&-\sum\limits_{t\in[\mathbf{v}]_{n}}\sum\limits_{s\in[\mathbf{v}]_{n+1}\cup\{t,t'\}}\frac{\hbar}{u+x_t+\frac{\hbar}{4}}\frac{\hbar}{v+x_s-\frac{\hbar}{4}}\Phi_{[\mathbf{v}]_{n}\backslash\{t\}}(x_t)\Phi_{[\mathbf{v}]_{n+1}\cup\{t,t'\}\backslash\{s\}}(x_s)\cdot f(x_{\tau_s^+\tau_t^+\mathbf{[v]}}),
\end{align*}
and 
\begin{align*}
    &(\mathbf{B}_{n+1}(v)\mathbf{B}_{n}(u)f)(x_{\mathbf{[v]}})\\
    =&-\sum\limits_{s\in[\mathbf{v}]_{n+1}}\sum\limits_{t\in[\mathbf{v}]_{n}\cup\{s'\}}\frac{\hbar}{u+x_t+\frac{\hbar}{4}}\frac{\hbar}{v+x_s-\frac{\hbar}{4}}\Phi_{[\mathbf{v}]_{n}\cup\{s'\}\backslash\{t\}}(x_t)\Phi_{[\mathbf{v}]_{n+1}\backslash\{s\}}(x_s)\cdot f(x_{\tau_t^+\tau_s^+\mathbf{[v]}}).
\end{align*}
Hence, for $t\in [\mathbf{v}]_{n}$ and $s\in[\mathbf{v}]_{n+1}$, the coefficient of 
\[\frac{\hbar}{u+x_t+\frac{\hbar}{4}}\frac{\hbar}{v+x_s-\frac{\hbar}{4}}\Phi_{[\mathbf{v}]_{n}\cup\{s'\}\backslash\{t\}}(x_t)\Phi_{[\mathbf{v}]_{n+1}\backslash\{s\}}(x_s)\cdot f(x_{\tau_t^+\tau_s^+\mathbf{[v]}})\]
in the left hand side of \eqref{equ:BBij} is
\begin{align*}
    &-(u-v+\frac{\hbar}{2})\varphi(x_t-x_s)+(u-v-\frac{\hbar}{2})+(u+x_t+\frac{\hbar}{4})\varphi(x_t-x_s)\\
    &-(v+x_s-\frac{\hbar}{4})\varphi(x_t-x_s)-(u+x_t+\frac{\hbar}{4})+(v+x_s-\frac{\hbar}{4})=0.
\end{align*}
Similarly, for $t\in [\mathbf{v}]_{n}$, the coefficient of 
\[\frac{\hbar}{u+x_t+\frac{\hbar}{4}}\frac{\hbar}{v+x_t-\frac{\hbar}{4}}\Phi_{[\mathbf{v}]_{n}\backslash\{t\}}(x_t)\Phi_{[\mathbf{v}]_{n+1}\cup\{t'\}}(x_t)\cdot f(x_{\tau_t^+\tau_t^+\mathbf{[v]}})\]
in the left hand side of \eqref{equ:BBij} is
\[-(u-v+\frac{\hbar}{2})+(u+x_t+\frac{\hbar}{4})-(v+x_t-\frac{\hbar}{4})=0.\]
Hence, the coefficient of $f(x_{\mathbf{[v]}})$ in the left hand side of \eqref{equ:BBij} is 
\begin{align*}
    &-(u-v+\frac{\hbar}{2})\sum\limits_{t\in[\mathbf{v}]_{n}}\frac{\hbar}{u+x_t+\frac{\hbar}{4}}\frac{\hbar}{v-x_t-\frac{\hbar}{4}}\Phi_{[\mathbf{v}]_{n}\backslash\{t\}}(x_t)\Phi_{[\mathbf{v}]_{n+1}\cup\{t\}}(-x_t)\\
    &+(u-v-\frac{\hbar}{2})\sum\limits_{s\in[\mathbf{v}]_{n+1}}\frac{\hbar}{u-x_s+\frac{\hbar}{4}}\frac{\hbar}{v+x_s-\frac{\hbar}{4}}\Phi_{[\mathbf{v}]_{n}}(-x_s)\Phi_{[\mathbf{v}]_{n+1}\backslash\{s\}}(x_s)\\
    &+\hbar\sum\limits_{t\in[\mathbf{v}]_{n}}\frac{\hbar}{v-x_t-\frac{\hbar}{4}}\Phi_{[\mathbf{v}]_{n}\backslash\{t\}}(x_t)\Phi_{[\mathbf{v}]_{n+1}\cup\{t\}}(-x_t)\\
    &-\hbar\sum\limits_{t\in[\mathbf{v}]_{n}}\frac{\hbar}{u+x_t+\frac{\hbar}{4}}\Phi_{[\mathbf{v}]_{n}\backslash\{t\}}(x_t)\Phi_{[\mathbf{v}]_{n+1}\cup\{t\}}(-x_t)\\
    &-\hbar\sum\limits_{s\in[\mathbf{v}]_{n+1}}\frac{\hbar}{v+x_s-\frac{\hbar}{4}}\Phi_{[\mathbf{v}]_{n}}(-x_s)\Phi_{[\mathbf{v}]_{n+1}\backslash\{s\}}(x_s)\\
    &+\hbar\sum\limits_{s\in[\mathbf{v}]_{n+1}}\frac{\hbar}{u-x_s+\frac{\hbar}{4}}\Phi_{[\mathbf{v}]_{n}}(-x_s)\Phi_{[\mathbf{v}]_{n+1}\backslash\{s\}}(x_s)\\
    =&\frac{2\hbar}{u+v}\bigg(\sum\limits_{t\in[\mathbf{v}]_{n}}\frac{\hbar(x_t+\frac{\hbar}{2})}{u+x_t+\frac{\hbar}{4}}\Phi_{[\mathbf{v}]_{n}\backslash\{t\}}(x_t)\Phi_{[\mathbf{v}]_{n+1}}(-x_t)\\
    &+\sum\limits_{s\in[\mathbf{v}]_{n+1}}\frac{\hbar(x_s-\frac{\hbar}{2})}{u-x_s+\frac{\hbar}{4}}\Phi_{[\mathbf{v}]_{n}}(-x_s)\Phi_{[\mathbf{v}]_{n+1}\backslash\{s\}}(x_s)\bigg)\\
    &+\frac{2\hbar}{u+v}\bigg(\sum\limits_{t\in[\mathbf{v}]_{n}}\frac{\hbar(x_t+\frac{\hbar}{2})}{v-x_t-\frac{\hbar}{4}}\Phi_{[\mathbf{v}]_{n}\backslash\{t\}}(x_t)\Phi_{[\mathbf{v}]_{n+1}}(-x_t)\\
    &+\sum\limits_{s\in[\mathbf{v}]_{n+1}}\frac{\hbar(x_s-\frac{\hbar}{2})}{v+x_s-\frac{\hbar}{4}}\Phi_{[\mathbf{v}]_{n}}(-x_s)\Phi_{[\mathbf{v}]_{n+1}\backslash\{s\}}(x_s)\bigg)\\
    =&-\frac{2\hbar}{u+v}\bigg((uH_{n,\mathbf{v}}(u))^\circ-((-v)H_{n,\mathbf{v}}(-v))^\circ\bigg)\\
    =&-\frac{2\hbar}{u+v}\bigg(uH^\circ_{n,\mathbf{v}}(u)+vH^\circ_{n,\mathbf{v}}(-v)\bigg).
\end{align*}
Here the second equality follows from Lemma \ref{lem:Hexp}, while the last one follows from \eqref{equ:trundivbyz}.

\subsection{Relation \eqref{equ:hibj}} 
We need to check
\begin{align}\label{equ:hibjche}
&
\left(u^2-v^2-\frac{c_{i j} c_{\tau i, j}}{4} \hbar^2\right)\left[h_i(u), b_j(v)\right]-\bigg(\frac{c_{i j}-c_{\tau i, j}}{2} \hbar u +\frac{c_{i j}+c_{\tau i, j}}{2} \hbar v\bigg)\left\{h_i(u), b_j(v)\right\}\\
&\notag
+\hbar\left[h_i(u), b_{j, 1}\right] 
+\hbar v\left[h_i(u), b_{j, 0}\right]+\frac{c_{i j}+c_{\tau i, j}}{2} \hbar^2\left\{h_i(u), b_{j, 0}\right\}=0.
\end{align}
Since $H_{\tau i}(u)=H_i(-u)$, if \eqref{equ:hibjche} holds for $h_i(u)$, it also holds for $h_{\tau i}(u)$. Hence, we can assume $1\leq i\leq n$. On the other hand, by Lemma \ref{lem:inv}, we can also assume $1\leq j\leq n$. By the definition of the operators, it is enough to study the cases: $j=i, j=i-1$, and $j=i+1$.

\subsubsection{The case $j=i+1\leq n$}

By definition,
\begin{align*}
    &(\mathbf{B}_{j}(v)\mathbf{H}_{j-1}(u)f)(x_{\mathbf{[v]}})\\
    =&\sqrt{-1}\sum\limits_{t\in[\mathbf{v}]_{j}}\frac{\hbar}{v+x_t+\frac{n-j}{2}\hbar+\frac{\hbar}{4}}\Phi_{[\mathbf{v}]_{j}\backslash\{t\}}(x_t)H_{j-1,\mathbf{v}}(u) \cdot f(x_{\tau_t^+\mathbf{[v]}})\\
    &\cdot \varphi(-x_t-u-\frac{n-j}{2}\hbar-\frac{3\hbar}{4})^{-1}.
\end{align*}
Then \eqref{equ:hibjche} holds since
\begin{align*}
    &\bigg((u^2-v^2)-(v+x_t+\frac{n-j}{2}\hbar+\frac{\hbar}{4})(x_t+\frac{n-j}{2}\hbar+\frac{\hbar}{4})+v(v+x_t+\frac{n-j}{2}\hbar+\frac{\hbar}{4})\bigg)\\
    &\cdot (1-\varphi(-x_t-u-\frac{n-j}{2}\hbar-\frac{3\hbar}{4})^{-1})\\
    &+\bigg(\frac{\hbar (u+v)}{2}-\frac{\hbar}{2}(v+x_t+\frac{n-j}{2}\hbar+\frac{\hbar}{4})\bigg)\\
    &\cdot (1+\varphi(-x_t-u-\frac{n-j}{2}\hbar-\frac{3\hbar}{4})^{-1})=0.
\end{align*}

\subsubsection{The case $j=i\leq n$}

Let us first assume $j=i\leq n-1$. Then
\begin{align*}
    &(\mathbf{B}_{j}(v)\mathbf{H}_{j}(u)f)(x_{\mathbf{[v]}})\\
    =&\sqrt{-1}\sum\limits_{t\in[\mathbf{v}]_{j}}\frac{\hbar}{v+x_t+\frac{n-j}{2}\hbar+\frac{\hbar}{4}}\Phi_{[\mathbf{v}]_{j}\backslash\{t\}}(x_t)H_{j,\mathbf{v}}(u)\cdot f(x_{\tau_t^+\mathbf{[v]}})\\
    &\cdot \varphi(-x_t-u-\frac{n-j}{2}\hbar-\frac{\hbar}{4})\varphi(x_t+u+\frac{n-j}{2}\hbar+\frac{\hbar}{4})^{-1},
\end{align*}
and \eqref{equ:hibjche} holds since
\begin{align*}
    &\bigg((u^2-v^2)-(v+x_t+\frac{n-j}{2}\hbar+\frac{\hbar}{4})(x_t+\frac{n-j}{2}\hbar+\frac{\hbar}{4})+v(v+x_t+\frac{n-j}{2}\hbar+\frac{\hbar}{4})\bigg)\\
    &\cdot \bigg(1-\varphi(-x_t-u-\frac{n-j}{2}\hbar-\frac{\hbar}{4})\varphi(x_t+u+\frac{n-j}{2}\hbar+\frac{\hbar}{4})^{-1}\bigg)\\
    &+\bigg(-\hbar (u+v)+\hbar(v+x_t+\frac{n-j}{2}\hbar+\frac{\hbar}{4})\bigg)\\
    &\bigg(1+\varphi(-x_t-u-\frac{n-j}{2}\hbar-\frac{\hbar}{4})\varphi(x_t+u+\frac{n-j}{2}\hbar+\frac{\hbar}{4})^{-1}\bigg)=0.
\end{align*}

Now let us consider the case $j=i=n$.
Then
\begin{align*}
    &(\mathbf{B}_{n}(v)\mathbf{H}_{n}(u)f)(x_{\mathbf{[v]}})\\
    =&\sqrt{-1}\sum\limits_{t\in[\mathbf{v}]_{n}}\frac{\hbar}{v+x_t+\frac{\hbar}{4}}\Phi_{[\mathbf{v}]_{n}\backslash\{t\}}(x_t)H_{n,\mathbf{v}}(u)\cdot f(x_{\tau_t^+\mathbf{[v]}})\\
    &\cdot \varphi(x_t-u-\frac{\hbar}{4})\varphi(-x_t-u-\frac{\hbar}{4})\varphi(x_t+u+\frac{\hbar}{4})^{-1},
\end{align*}
and \eqref{equ:hibjche} holds since
\begin{align*}
    &\bigg((u^2-v^2+\frac{\hbar^2}{2})-(v+x_t+\frac{\hbar}{4})(x_t+\frac{\hbar}{4})+v(v+x_t+\frac{\hbar}{4})\bigg)\\
    &\cdot \bigg(1-\varphi(x_t-u-\frac{\hbar}{4})\varphi(-x_t-u-\frac{\hbar}{4})\varphi(x_t+u+\frac{\hbar}{4})^{-1}\bigg)\\
    &+\bigg(-\hbar (\frac{3u}{2}+\frac{v}{2})+\frac{\hbar}{2}(v+x_t+\frac{\hbar}{4})\bigg)\\
    &\bigg(1+\varphi(x_t-u-\frac{\hbar}{4})\varphi(-x_t-u-\frac{\hbar}{4})\varphi(x_t+u+\frac{\hbar}{4})^{-1}\bigg)=0.
\end{align*}

\subsubsection{The case $j+1=i\leq n$}

As above, 
\begin{align*}
    &(\mathbf{B}_{j}(v)\mathbf{H}_{j+1}(u)f)(x_{\mathbf{[v]}})\\
    =&\sqrt{-1}\sum\limits_{t\in[\mathbf{v}]_{j}}\frac{\hbar}{v+x_t+\frac{n-j}{2}\hbar+\frac{\hbar}{4}}\Phi_{[\mathbf{v}]_{j}\backslash\{t\}}(x_t)H_{j+1,\mathbf{v}}(u)\cdot f(x_{\tau_t^+\mathbf{[v]}})\\
    &\cdot  \varphi(x_t+u+\frac{n-j}{2}\hbar-\frac{\hbar}{4}),
\end{align*}
and \eqref{equ:hibjche} holds since
\begin{align*}
    &\bigg((u^2-v^2)-(v+x_t+\frac{n-j}{2}\hbar+\frac{\hbar}{4})(x_t+\frac{n-j}{2}\hbar+\frac{\hbar}{4})+v(v+x_t+\frac{n-j}{2}\hbar+\frac{\hbar}{4})\bigg)\\
    &\cdot (1-\varphi(x_t+u+\frac{n-j}{2}\hbar-\frac{\hbar}{4}))\\
    &+\bigg(\frac{\hbar (u+v)}{2}-\frac{\hbar}{2}(v+x_t+\frac{n-j}{2}\hbar+\frac{\hbar}{4})\bigg)(1+\varphi(x_t+u+\frac{n-j}{2}\hbar-\frac{\hbar}{4}))=0.
\end{align*}

\subsection{Relation \eqref{equ:bibjcij0}}\label{sec:bibjcij0}

In this section, we check
\begin{align*}
    (u+v)[b_i(u),b_j(v)]=\delta_{\tau i,j}\hbar(h_j(v)-h_i(u)),\qquad c_{ij}=0.
\end{align*}
Since $c_{ij}=0$, it is obvious to see that if $j\neq \tau i$, $\mathbf{B}_i(u)$ and $\mathbf{B}_j(v)$ commute. Therefore, we only need to show the case when $j=\tau i$.
By definition, if $j=\tau i$ and $c_{i,\tau i}=0$,
\begin{align*}
    &(\mathbf{B}_{i}(u)\mathbf{B}_{\tau i}(v)f)(x_{\mathbf{[v]}})\\
    =&-\sum\limits_{t\in[\mathbf{v}]_{i}}\sum\limits_{s\in[\mathbf{v}]_{\tau i}\cup\{t'\}}\frac{\hbar}{u+x_t+\frac{n-i}{2}\hbar+\frac{\hbar}{4}}\frac{\hbar}{v+x_s-\frac{n-i}{2}\hbar-\frac{\hbar}{4}}\Phi_{[\mathbf{v}]_{i}\backslash\{t\}}(x_t)\Phi_{[\mathbf{v}]_{\tau i}\cup\{t'\}\backslash\{s\}}(x_s)\cdot f(x_{\tau_s^+\tau_t^+\mathbf{[v]}}),
\end{align*}
and 
\begin{align*}
    &(\mathbf{B}_{\tau i}(v)\mathbf{B}_{i}(u)f)(x_{\mathbf{[v]}})\\
    =&-\sum\limits_{s\in[\mathbf{v}]_{\tau i}}\sum\limits_{t\in[\mathbf{v}]_{ i}\cup\{s'\}}\frac{\hbar}{u+x_t+\frac{n-i}{2}\hbar+\frac{\hbar}{4}}\frac{\hbar}{v+x_s-\frac{n-i}{2}\hbar-\frac{\hbar}{4}}\Phi_{[\mathbf{v}]_{i}\cup\{s'\}\backslash\{t\}}(x_t)\Phi_{[\mathbf{v}]_{\tau i}\backslash\{s\}}(x_s)\cdot f(x_{\tau_s^+\tau_t^+\mathbf{[v]}}).
\end{align*}
Therefore, 
\begin{align*}
    &(u+v)(\mathbf{B}_{i}(u)\mathbf{B}_{\tau i}(v)f-\mathbf{B}_{\tau i}(v)\mathbf{B}_{i}(u)f)(x_{\mathbf{[v]}})\\
    =&-(u+v)\sum\limits_{t\in[\mathbf{v}]_{i}}\frac{\hbar}{u+x_t+\frac{n-i}{2}\hbar+\frac{\hbar}{4}}\frac{\hbar}{v-x_t-\frac{n-i}{2}\hbar-\frac{\hbar}{4}}\Phi_{[\mathbf{v}]_{i}\backslash\{t\}}(x_t)\Phi_{[\mathbf{v}]_{\tau i}}(-x_t)\cdot f(x_{\mathbf{[v]}})\\
    &+(u+v)\sum\limits_{s\in[\mathbf{v}]_{\tau i}}\frac{\hbar}{u-x_s+\frac{n-i}{2}\hbar+\frac{\hbar}{4}}\frac{\hbar}{v+x_s-\frac{n-i}{2}\hbar-\frac{\hbar}{4}}\Phi_{[\mathbf{v}]_{i}}(-x_s)\Phi_{[\mathbf{v}]_{\tau i}\backslash\{s\}}(x_s)\cdot f(x_{\mathbf{[v]}})\\
    =&\hbar f(x_{\mathbf{[v]}})\bigg(\sum\limits_{s\in[\mathbf{v}]_{\tau i}}\frac{\hbar}{v+x_s-\frac{n-i}{2}\hbar-\frac{\hbar}{4}}\Phi_{[\mathbf{v}]_{i}}(-x_s)\Phi_{[\mathbf{v}]_{\tau i}\backslash\{s\}}(x_s)\\
    &-\sum\limits_{t\in[\mathbf{v}]_{i}}\frac{\hbar}{v-x_t-\frac{n-i}{2}\hbar-\frac{\hbar}{4}}\Phi_{[\mathbf{v}]_{i}\backslash\{t\}}(x_t)\Phi_{[\mathbf{v}]_{\tau i}}(-x_t)\bigg)\\
    &-\hbar f(x_{\mathbf{[v]}})\bigg(\sum\limits_{t\in[\mathbf{v}]_{i}}\frac{\hbar}{u+x_t+\frac{n-i}{2}\hbar+\frac{\hbar}{4}}\Phi_{[\mathbf{v}]_{i}\backslash\{t\}}(x_t)\Phi_{[\mathbf{v}]_{\tau i}}(-x_t)\\
    &-\sum\limits_{s\in[\mathbf{v}]_{\tau i}}\frac{\hbar}{u-x_s+\frac{n-i}{2}\hbar+\frac{\hbar}{4}}\Phi_{[\mathbf{v}]_{i}}(-x_s)\Phi_{[\mathbf{v}]_{\tau i}\backslash\{s\}}(x_s)\bigg)\\
    =&\hbar\bigg(\mathbf{H}_{\tau i}(v)f-\mathbf{H}_i(u)f\bigg)(x_{\mathbf{[v]}}).
\end{align*}
Here the last equality follows from the first equation in Lemma \ref{lem:Hexp}.

\subsection{Serre relation}
In this section, we check \eqref{equ:fserre}. The other Serre relation \eqref{equ:fserreij} can be checked in an easier way, so we omit it.
We need to show 
\[[b_{n,0},[b_{n,0},b_{n+1,0}]]=4b_{n,0},\textit{ and }
        [b_{n+1,0},[b_{n+1,0},b_{n,0}]]=4b_{n+1,0}.\]
By Lemma \ref{lem:inv}, we only need to check the first one. Recall
\begin{equation*}
    (B_{n,0}f)(x_{[\mathbf{v}]}):=\sqrt{-1}\sum_{r\in [\mathbf{v}]_{n}}\Phi_{[\mathbf{v}]_{n}\setminus\{r\}}(x_r)\cdot  f(x_{\tau^+_r[\mathbf{v}]}),
\end{equation*}
and 
\begin{equation*}
    (B_{n+1,0}f)(x_{[\mathbf{v}]}):=\sqrt{-1}\sum_{s\in [\mathbf{v}]_{n+1}}\Phi_{[\mathbf{v}]_{n+1}\setminus\{s\}}(x_s)\cdot  f(x_{\tau^+_s[\mathbf{v}]}).
\end{equation*}
We need to show
\begin{align}\label{equ:serreB}
    &\sqrt{-1}(B_{n,0}B_{n,0}B_{n+1,0}f-2B_{n,0}B_{n+1,0}B_{n,0}f+B_{n+1,0}B_{n,0}B_{n,0}f)(x_{[\mathbf{v}]})\\
    =&\sqrt{-1}(4B_{n,0}f)(x_{[\mathbf{v}]}).\notag
\end{align}
By definition,
\begin{align*}
    &\sqrt{-1}(B_{n,0}B_{n,0}B_{n+1,0}f)(x_{[\mathbf{v}]})\\
    =&\sum_{s\in [\mathbf{v}]_{n}}\sum_{t\in [\mathbf{v}]_{n}\setminus\{s\}}\sum_{r\in [\mathbf{v}]_{n+1}\cup\{s,t,s',t'\}}\Phi_{[\mathbf{v}]_{n}\setminus\{s\}}(x_s)\Phi_{[\mathbf{v}]_{n}\setminus\{s,t\}}(x_t)\\
    &\Phi_{[\mathbf{v}]_{n+1}\cup\{s,t,s',t'\}\setminus\{r\}}(x_r)\cdot  f(x_{\tau_r^+\tau_t^+\tau^+_s[\mathbf{v}]}),
\end{align*}
\begin{align*}
    &\sqrt{-1}(B_{n,0}B_{n+1,0}B_{n,0}f)(x_{[\mathbf{v}]})\\
    =&\sum_{s\in [\mathbf{v}]_{n}}\sum_{r\in [\mathbf{v}]_{n+1}\cup\{s,s'\}}\sum_{t\in [\mathbf{v}]_{n}\cup\{r'\}\setminus\{s\}}\Phi_{[\mathbf{v}]_{n}\setminus\{s\}}(x_s)\Phi_{[\mathbf{v}]_{n}\cup\{r'\}\setminus\{s,t\}}(x_t)\\
    &\Phi_{[\mathbf{v}]_{n+1}\cup\{s,s'\}\setminus\{r\}}(x_r)\cdot  f(x_{\tau_t^+\tau_r^+\tau^+_s[\mathbf{v}]}),
\end{align*}
and 
\begin{align*}
    &\sqrt{-1}(B_{n+1,0}B_{n,0}B_{n,0}f)(x_{[\mathbf{v}]})\\
    =&\sum_{r\in [\mathbf{v}]_{n+1}}\sum_{s\in [\mathbf{v}]_{n}\cup\{r'\}}\sum_{t\in [\mathbf{v}]_{n}\cup\{r'\}\setminus\{s\}}\Phi_{[\mathbf{v}]_{n}\cup\{r'\}\setminus\{s\}}(x_s)\Phi_{[\mathbf{v}]_{n}\cup\{r'\}\setminus\{s,t\}}(x_t)\\
    &\Phi_{[\mathbf{v}]_{n+1}\setminus\{r\}}(x_r)\cdot  f(x_{\tau_t^+\tau_s^+\tau_r^+[\mathbf{v}]}).
\end{align*}

Therefore, for $s\neq t\in [\mathbf{v}]_n$ and $r\in [\mathbf{v}]_{n+1}$, the coefficient of $f(x_{\tau_t^+\tau_s^+\tau_r^+[\mathbf{v}]})$ on the left hand side of \eqref{equ:serreB} is
\begin{align*}
    &\Phi_{[\mathbf{v}]_{n}\setminus\{s,t\}}(x_s)\Phi_{[\mathbf{v}]_{n}\setminus\{s,t\}}(x_t)\Phi_{[\mathbf{v}]_{n+1}\setminus\{r\}}(x_r)\varphi(-x_r-x_s)\varphi(-x_r-x_t)\\
    &\cdot \bigg(\varphi(x_s-x_r)\varphi(x_t-x_r)\varphi(x_t-x_s)+\varphi(x_s-x_r)\varphi(x_t-x_r)\varphi(x_s-x_t)\\
    &-2\varphi(x_t-x_s)\varphi(x_s-x_r)-2\varphi(x_s-x_t)\varphi(x_t-x_r)+\varphi(x_t-x_s)+\varphi(x_s-x_t)\bigg)\\
    =&0.
\end{align*}
Morevoer, for $s\neq t\in [\mathbf{v}]_n$, the coefficient of \[\Phi_{[\mathbf{v}]_{n}\setminus\{s\}}(x_s)\Phi_{[\mathbf{v}]_{n}\cup\{s'\}\setminus\{s,t\}}(x_t)\Phi_{[\mathbf{v}]_{n+1}\cup\{s'\}}(x_s)f(x_{\tau_{s}^+\tau_t^+\tau^+_s[\mathbf{v}]})\] in the left hand side of \eqref{equ:serreB} is
\begin{align*}
    &\varphi(x_t-x_s)+\varphi(x_s-x_t)-2=0.
\end{align*}

Therefore, the terms on the left hand side of \eqref{equ:serreB} are all of the form $\Phi_{[\mathbf{v}]_{n}\setminus\{s\}}(x_s)\cdot f(x_{\tau^+_s[\mathbf{v}]})$ for some $s \in [\mathbf{v}]_{n}$, and its coefficient is
\begin{align*}
    &\sum_{t\in [\mathbf{v}]_{n}\setminus\{s\}}\Phi_{[\mathbf{v}]_{n}\setminus\{t\}}(x_t)\Phi_{[\mathbf{v}]_{n+1}\cup\{t,s\}}(-x_t)+\sum_{t\in [\mathbf{v}]_{n}\setminus\{s\}}\Phi_{[\mathbf{v}]_{n}\setminus\{s,t\}}(x_t)\Phi_{[\mathbf{v}]_{n+1}\cup\{s,t,s'\}}(-x_t)\\
    &-2\Phi_{[\mathbf{v}]_{n}\setminus\{s\}}(-x_s)\Phi_{[\mathbf{v}]_{n+1}\cup\{s'\}}(x_s)\\
    &-2\sum_{t\in [\mathbf{v}]_{n}}\Phi_{[\mathbf{v}]_{n}\setminus\{t\}}(x_t)\Phi_{[\mathbf{v}]_{n+1}\cup\{t\}}(-x_t)-2\sum_{r\in [\mathbf{v}]_{n+1}}\Phi_{[\mathbf{v}]_{n}\setminus\{s\}}(-x_r)\Phi_{[\mathbf{v}]_{n+1}\cup\{s,s'\}\setminus\{r\}}(x_r)\\
    &+\sum_{r\in [\mathbf{v}]_{n+1}}\Phi_{[\mathbf{v}]_{n}}(-x_r)\Phi_{[\mathbf{v}]_{n+1}\setminus\{r\}}(x_r)+\sum_{r\in [\mathbf{v}]_{n+1}}\varphi(-x_r-x_s)\Phi_{[\mathbf{v}]_{n}\setminus\{s\}}(-x_r)\Phi_{[\mathbf{v}]_{n+1}\setminus\{r\}}(x_r)\\
    =&2\sum_{t\in [\mathbf{v}]_{n}\setminus\{s\}}\Phi_{[\mathbf{v}]_{n}\setminus\{s,t\}}(x_t)\Phi_{[\mathbf{v}]_{n+1}}(-x_t)\frac{\hbar(\hbar+2x_t)}{x_t^2-x_s^2}\\
    &-2\sum_{r\in [\mathbf{v}]_{n+1}}\Phi_{[\mathbf{v}]_{n}\setminus\{s\}}(-x_r)\Phi_{[\mathbf{v}]_{n+1}\setminus\{r\}}(x_r)\frac{\hbar(\hbar-2x_r)}{x_r^2-x_s^2}\\
    &-2\Phi_{[\mathbf{v}]_{n}\setminus\{s\}}(-x_s)\Phi_{[\mathbf{v}]_{n+1}\cup\{s'\}}(x_s)-2\Phi_{[\mathbf{v}]_{n}\setminus\{s\}}(x_s)\Phi_{[\mathbf{v}]_{n+1}\cup\{s\}}(-x_s)\\
    =&2\Res(f(x),\infty)\\
    =&-4,
\end{align*}
where the second equation follows from the residue theorem for the function
\begin{align*}
    f(x):=&\Phi_{[\mathbf{v}]_{n}\setminus\{s\}}(x)\Phi_{[\mathbf{v}]_{n+1}}(-x)\frac{\hbar+2x}{(x+x_s)(x-x_s)}\\
    =&\prod_{t\in [\mathbf{v}]_{n}\setminus\{s\}}(1+\frac{\hbar}{x_t-x})\prod_{r\in [\mathbf{v}]_{n+1}}(1+\frac{\hbar}{x+x_r})\cdot\frac{\hbar+2x}{(x+x_s)(x-x_s)}.
\end{align*}
Hence, \eqref{equ:serreB} holds, and this concludes the proof of Theorem \ref{thm:polyrep}.

\bibliographystyle{alpha}
\bibliography{Yangian}

\end{document}